\definecolor{colorOne}{RGB}{0, 39, 77}
\definecolor{colorTwo}{RGB}{97, 0, 57}
\definecolor{colorThree}{RGB}{176, 1, 54}
\newcommand{\timeVariable}{t} 
\newcommand{\spaceVariable}{x} 
\newcommand{\vectorial}[1]{\bm{#1}} 
\newcommand{\conservedVariable}{u} 
\newcommand{\indexDirection}{k} 
\newcommand{\spatialDimensionality}{d} 
\newcommand{\flux}{\varphi} 
\newcommand{\numberVelocities}{q} 
\newcommand{\numberLinks}{W} 
\newcommand{\idEst}{\emph{i.e.}}
\newcommand{\confer}{\emph{cf.}}
\newcommand{\exempliGratia}{\emph{e.g.}}
\newcommand{\spaceStep}{\Delta \spaceVariable} 
\newcommand{\timeStep}{\Delta \timeVariable} 
\newcommand{\latticeVelocity}{\lambda} 
\newcommand{\discreteVelocityLetter}{c} 
\newcommand{\indexVelocity}{i} 
\newcommand{\indexLink}{\ell} 
\newcommand{\relatives}{\mathbb{Z}} 
\newcommand{\naturals}{\mathbb{N}} 
\newcommand{\naturalsWithoutZero}{\mathbb{N}^*} 
\newcommand{\integerInterval}[2]{\llbracket#1, #2\rrbracket} 
\newcommand{\discrete}[1]{\mathsf{#1}} 
\newcommand{\distributionFunctionLetter}{f}
\newcommand{\distributionFunction}{\discrete{\distributionFunctionLetter}} 
\newcommand{\indexSpace}{j} 
\newcommand{\indexTime}{n} 
\newcommand{\symmetricDistributionFunctionLetter}{s}
\newcommand{\symmetricDistributionFunction}{\discrete{\symmetricDistributionFunctionLetter}}
\newcommand{\antiSymmetricDistributionFunctionLetter}{a}
\newcommand{\antiSymmetricDistributionFunction}{\discrete{\antiSymmetricDistributionFunctionLetter}}
\newcommand{\definitionEquality}{:=} 
\newcommand{\collided}{\star} 
\newcommand{\relaxationParameter}{\omega} 
\newcommand{\relaxationParameterSymmetric}{\omega_{\discrete{s}}} 
\newcommand{\relaxationParameterAntiSymmetric}{\omega_{\discrete{a}}} 
\newcommand{\relaxationTimeSymmetric}{\epsilon_{\discrete{s}}}
\newcommand{\relaxationTimeAntiSymmetric}{\epsilon_{\discrete{a}}}
\newcommand{\atEquilibrium}{\textnormal{eq}} 
\newcommand{\conservedVariableDiscrete}{\discrete{\conservedVariable}} 
\newcommand{\equilibriumCoefficientLinear}{\mathscr{L}} 
\newcommand{\equilibriumCoefficientFlux}{\mathscr{N}} 
\newcommand{\differential}{\textnormal{d}} 
\newcommand{\maximumInitialDatum}{u_{\infty}} 
\newcommand{\minimumDistribution}{\underline{f}} 
\newcommand{\invariantCompactSetDistributions}{K} 
\newcommand{\lbmScheme}[2]{$\textnormal{D}_{#1}\textnormal{Q}_{#2}$} 
\newcommand{\reals}{\mathbb{R}} 
\newcommand{\initial}{\circ} 
\newcommand{\lebesgueSpace}[1]{L^{#1}} 
\newcommand{\boundedVariationSpace}{\textnormal{BV}} 
\newcommand{\totalVariation}[1]{\textnormal{TV}(#1)}
\newcommand{\totalVariationVectorial}[1]{\textnormal{TV}(#1)}
\newcommand{\continuousSpace}[1]{C^{#1}} 
\newcommand{\timeGridPoint}[1]{\timeVariable^{#1}} 
\newcommand{\spaceGridPoint}[1]{\vectorial{\spaceVariable}_{#1}} 
\newcommand{\collisionOperator}{\mathcal{R}} 
\newcommand{\cell}[1]{C_{#1}} 
\newcommand{\discreteMark}{\Delta} 
\newcommand{\distributionFunctionsAsFunction}{\vectorial{\distributionFunctionLetter}_{\discreteMark}} 
\newcommand{\distributionFunctionsAsFunctionComponent}[1]{{\distributionFunctionLetter}_{\discreteMark, #1}} 
\newcommand{\distributionFunctionsAsFunctionWithStep}[1]{\vectorial{\distributionFunctionLetter}_{#1}} 
\newcommand{\conservedVariableDiscreteAsFunction}{\conservedVariable_{\discreteMark}} 
\newcommand{\conservedVariableDiscreteAsFunctionWithStep}[1]{\conservedVariable_{#1}} 
\newcommand{\distributionFunctionsAsFunctionSpace}[1]{\vectorial{\distributionFunctionLetter}_{\discreteMark}^{#1}} 
\newcommand{\distributionFunctionsAsFunctionSpaceAnyLetter}[2]{\vectorial{#1}_{\discreteMark}^{#2}}
\newcommand{\conservedVariableDiscreteAsFunctionSpace}[1]{\conservedVariable_{\discreteMark}^{#1}} 
\newcommand{\transpose}[1]{#1^{\textsf{T}}} 
\newcommand{\indicatorFunction}[1]{\mathds{1}_{#1}} 
\newcommand{\lebesgueInSpace}[1]{L_{\vectorial{\spaceVariable}}^{#1}}
\newcommand{\lebesgueTime}[1]{L_{\timeVariable}^{#1}} 
\newcommand{\petitLebesgueSpace}[1]{\ell^{#1}} 
\newcommand{\besovSpace}[3]{B_{#1, #3}^{#2}}
\newcommand{\strong}[1]{\emph{#1}} 
\newcommand{\bigO}[1]{\mathcal{O}(#1)}
\newcommand{\canonicalBasisVector}[1]{\vectorial{e}_{#1}}
\newcommand{\limitConservedMoment}{\overline{\conservedVariable}}
\newcommand{\limitDistributionFunction}{\overline{\distributionFunctionLetter}}
\newcommand{\finalTime}{T}
\newcommand{\testFunction}{\psi}
\newcommand{\testFunctionWithStep}{\psi_{\discreteMark}}
\newcommand{\testFunctionDiscrete}{\discrete{\psi}}
\newcommand{\monotoneZoneTwoD}{\mathcal{M}}
\newcommand{\monotoneSegmentBGK}{\mathcal{M}_{\textnormal{BGK}}}
\newcommand{\kineticEntropy}{s}
\newcommand{\krushkovParameter}{\kappa}
\newcommand{\sign}{\text{sgn}}
\DeclareMathOperator*{\argmin}{argmin}
\def\Xint#1{\mathchoice
   {\XXint\displaystyle\textstyle{#1}}%
   {\XXint\textstyle\scriptstyle{#1}}%
   {\XXint\scriptstyle\scriptscriptstyle{#1}}%
   {\XXint\scriptscriptstyle\scriptscriptstyle{#1}}%
   \!\int}
\def\XXint#1#2#3{{\setbox0=\hbox{$#1{#2#3}{\int}$}
     \vcenter{\hbox{$#2#3$}}\kern-.5\wd0}}
\def\dashint{\Xint-}
\NewDocumentCommand{\refNoColor}{O{red}mo}{%
  \begingroup
  \hypersetup{hidelinks=true}%
  \ref{#2}%
  \IfValueT{#3}{%
    \color{#1}{#3}%
  }%
  \endgroup
}
\crefname{hypothesis}{Hypothesis}{Hypotheses}
\title{Monotonicity and convergence of two-relaxation-times lattice Boltzmann schemes for a non-linear conservation law}
\author{Denise Aregba-Driollet\thanks{Université de Bordeaux, CNRS, Bordeaux INP, IMB, UMR 5251, 33400 Talence, France.}
\and Thomas Bellotti\thanks{Université Paris-Saclay, CNRS, CentraleSupélec, Laboratoire EM2C \& Fédération de Mathématiques de CentraleSupélec, 91190, Gif-sur-Yvette, France.}}
\begin{document}


\maketitle

\begin{abstract}
    We address the convergence analysis of lattice Boltzmann methods for scalar non-linear conservation laws, focusing on two-relaxation-times (TRT) schemes. Unlike Finite Difference/Finite Volume methods, lattice Boltzmann schemes offer exceptional computational efficiency and parallelization capabilities. However, their monotonicity and $L^{\infty}$-stability remain underexplored. Extending existing results on simpler BGK schemes, we derive conditions ensuring that TRT schemes are monotone and stable by leveraging their unique relaxation structure. Our analysis culminates in proving convergence of the numerical solution to the weak entropy solution of the conservation law. Compared to BGK schemes, TRT schemes achieve reduced numerical diffusion while retaining provable convergence. Numerical experiments validate and illustrate the theoretical findings.
\end{abstract}
    
\begin{keywords}
    monotonicity, lattice Boltzmann, two-relaxation-times, convergence
\end{keywords}
    
\begin{MSCcodes}
    65M12, 65M08, 65M06, 35L60
\end{MSCcodes}

\section{Introduction}

We consider the following \strong{scalar non-linear conservation law}, endowed with an initial condition, as studied in \cite{bellotti2023monotonicity, aregba2024convergence}: 
\begin{equation}\label{eq:conservationLaw}
    \begin{cases}
        \partial_{\timeVariable}\conservedVariable(\timeVariable, \vectorial{\spaceVariable}) + \sum\limits_{\indexDirection = 1}^{\spatialDimensionality} \partial_{\spaceVariable_{\indexDirection}}\flux_{\indexDirection}(\conservedVariable(\timeVariable, \vectorial{\spaceVariable})) = 0, \qquad \timeVariable>0, \quad &\vectorial{\spaceVariable} \in \reals^{\spatialDimensionality}, \\
        \conservedVariable(0, \vectorial{\spaceVariable}) = \conservedVariable^{\initial}(\vectorial{\spaceVariable}), \qquad &\vectorial{\spaceVariable} \in \reals^{\spatialDimensionality}.
    \end{cases}
\end{equation}
Here, we assume that the fluxes $\flux_{\indexDirection}$ and initial data $\conservedVariable^{\initial}$ fulfill the usual assumptions: $\flux_{\indexDirection} \in \continuousSpace{1}(\reals)$ and $\conservedVariable^{\initial} \in \lebesgueSpace{1}(\reals^{\spatialDimensionality}) \cap \lebesgueSpace{\infty}(\reals^{\spatialDimensionality}) \cap \boundedVariationSpace(\reals^{\spatialDimensionality})$.
Moreover, we focus on non-constant fluxes $\flux_{\indexDirection}$ satisfying $\flux_{\indexDirection}(0) = 0$.

The convergence analysis of numerical methods for \eqref{eq:conservationLaw} relies on the notion of \strong{monotonicity} \cite[Chapter 3]{godlewski1991hyperbolic}, closely tied to $\lebesgueSpace{\infty}$-stability.
While monotonicity has been extensively studied for Finite Difference/Finite Volume schemes, its exploration for \strong{lattice Boltzmann schemes} remains limited. Lattice Boltzmann schemes are particularly attractive due to their exceptional computational efficiency and parallelization potential, which stem from their intrinsic algorithmic structure. However, discussions on monotonicity and $\lebesgueSpace{\infty}$-stability for these schemes are scarce in the literature.

\subsection{State of the art and current-research standpoint}

The study of monotonicity for lattice Boltzmann schemes has primarily focused on the one-dimensional setting. For a simple scheme with two unknowns, $\lebesgueSpace{\infty}$-stability has been characterized for the linear case in \cite{dellacherie2014construction}. In the non-linear setting, monotonicity and related properties were investigated under different relaxation regimes: under-relaxation in \cite{caetano2024result} and over-relaxation in \cite{bellotti2023monotonicity}. However, the technique of proof of each of these works does not work for the full range of parameters (namely, relaxation parameter and Courant number) where such properties hold, which is the reason why the former considers only under-relaxation, and the latter only over-relaxation.
The approach by \cite{caetano2024result} relies on the original lattice Boltzmann scheme and the fact that the relaxation is a convex combination. Conversely, \cite{bellotti2023monotonicity} transforms the lattice Boltzmann scheme into its corresponding multi-step Finite Difference scheme, and analyzes monotonicity on this latter, generalizing the usual notion of monotone scheme \cite[Definition 3.1]{godlewski1991hyperbolic}.

A significant step towards generality was achieved in \cite{aregba2024convergence}, which analyzed single-relaxation-time (SRT, or BGK, standing for Bhatnagar-Gross-Krook) multi-di\-men\-sio\-nal lattice Boltzmann schemes. This approach relies on the well-known correspondence between lattice Boltzmann schemes and discrete kinetic BGK systems \cite{graille2014approximation}. However, the method requires equilibria to be monotone non-decreasing with respect to their sole argument, thus can hardly handle MRT (multiple-relaxation-times) schemes.
However, this paper inspires our way of proceeding by observing that the monotonicity of the equilibria, plus a condition on the relaxation parameter, imply that the \strong{relaxation operator} is monotone non-decreasing with respect to each of its arguments.

In the present paper, we determine conditions under which---for the \strong{TRT (two-relaxation-times) schemes} \cite{ginzburg2008two} under scrutiny---the relaxation operator is monotone non-de\-creas\-ing, which was the intuition which \cite{dubois2020notion} relied on.
Thereby, we adapt the well-known proof by \cite{crandall1980monotone}, and show \strong{convergence} of the numerical solution to the unique weak entropy solution of \eqref{eq:conservationLaw}.
We do not pursue the idea of \cite{bellotti2023monotonicity}: recast to Finite Difference schemes---for two reasons. 
First, it would not easily provide the entire set of parameters where monotonicity can be expected, since this approach does not allow the study of the under-relaxation regime even for a simple scheme with two unknowns.
Second, it is indeed difficult to precisely describe the coefficients of the characteristic polynomial of the matrix representing generic schemes, except in a particular case recalled in what follows.

\subsection{The interest of TRT schemes}

TRT schemes offer a balance between the simplicity of BGK schemes and the generality of MRT schemes \cite{d1992generalized}.
While the transport step of any lattice Boltzmann scheme is very simple, \idEst{} diagonal in the space of the distribution functions, relaxation may or may not adhere to this structure.
BGK relaxations do, for they are diagonal in the space of the distribution functions, and the only coupling originates from the conserved moment, through equilibria.
Conversely, with MRT schemes, the relaxation phase is diagonal in another basis, under which the transport step is hard to analyze.
With TRT schemes, the relaxation is diagonal in a relatively simple ($2\times2$-block-diagonal) basis and $2\times2$-block-diagonal in the space of the distribution functions.

The reason why one would be interested in looking at TRT instead of simpler BGK schemes is the following.
TRT schemes introduce an additional degree of freedom through a second relaxation parameter. 
Specifically, we see in what follows that TRT schemes can achieve higher accuracy by increasing one relaxation parameter (which decreases numerical diffusion) \strong{without sacrificing monotonicity} (by reducing the other relaxation parameter), a feature not possible in BGK schemes.
This is particularly helpful with schemes featuring a zero velocity (\lbmScheme{1}{3}, \lbmScheme{2}{9}, \emph{etc.}), which are widely-used and, in their BGK version, see their relaxation parameter stuck very close to one to achieve monotonicity, \confer{} \cite{aregba2024convergence}.
In practice, TRT schemes involve two relaxation parameters $\relaxationParameterSymmetric, \relaxationParameterAntiSymmetric\in(0, 2]$. Only $\relaxationParameterAntiSymmetric$ influences the numerical viscosity (as already pointed out in \cite{ginzburg2006variably}), which, as derived from the modified equation, is $\propto \spaceStep \bigl ( \frac{1}{\relaxationParameterAntiSymmetric} - \tfrac{1}{2}\bigr ) \times (\text{scheme-dependent non-linear term})$.\footnote{To be more precise, this is the numerical viscosity of the physical mode: one must keep in mind that several modes coexist in lattice Boltzmann schemes. However, when $\relaxationParameterSymmetric, \relaxationParameterAntiSymmetric\in(0, 2)$, the numerical modes decay to equilibrium, thus become ``slave'' of the physical mode.}
By tuning $\relaxationParameterSymmetric$ appropriately, TRT schemes can maintain monotonicity even as $\relaxationParameterAntiSymmetric$ approaches 2, providing a notable improvement over BGK schemes.

\subsection{Plan of the work}

The paper is structured as follows.
In \cref{sec:schemesTRT}, we describe the TRT schemes in detail.
\Cref{sec:monotonicity} defines monotonicity, establishes conditions for it to hold, and studies related properties.
Then, \cref{sec:convergence} draws consequences of monotonicity to prove convergence to the entropy solution of \eqref{eq:conservationLaw}.
Numerical experiments are presented in \cref{sec:numericalExperiments} to validate the theoretical results. Finally, conclusions and future directions are discussed in \cref{sec:Conclusions}.

\section{Two-relaxation-times lattice Boltzmann schemes}\label{sec:schemesTRT}

\subsection{Parameters}
\begin{itemize}
    \item We consider a space-step $\spaceStep>0$, with time-step $\timeStep$ linked \emph{via} $\timeStep = \spaceStep / \latticeVelocity$, where $\latticeVelocity>0$ is kept fixed whenever the limit $\spaceStep\to 0$ is considered.
    The space-grid is made up of $\spaceGridPoint{\vectorial{\indexSpace}} \definitionEquality\vectorial{\indexSpace}\spaceStep$ for $\vectorial{\indexSpace}\in\relatives^{\spatialDimensionality}$, whereas the time-grid is composed of $\timeGridPoint{\indexTime} \definitionEquality\indexTime\timeStep$ for $\indexTime\in\naturals$.
    It is convenient to think at $\spaceGridPoint{\vectorial{\indexSpace}}$ as the center of the cell $\cell{\vectorial{\indexSpace}} \definitionEquality\prod_{\indexDirection=1}^{\indexDirection=\spatialDimensionality} \bigl (\bigl (\indexSpace_{\indexDirection}-\tfrac{1}{2}\bigr )\spaceStep,\bigl (\indexSpace_{\indexDirection}+\tfrac{1}{2}\bigr )\spaceStep\bigr )$.
    \item We consider $\numberVelocities = 1 + 2\numberLinks$, where $\numberLinks \in \naturalsWithoutZero$ is the number of links, \idEst{} the number of mutually opposed discrete velocities, listed consecutively for the sake of readability:
    \begin{equation}\label{eq:choiceDiscreteVelocities}
        \vectorial{\discreteVelocityLetter}_1 = \vectorial{0}, \qquad 
        \vectorial{\discreteVelocityLetter}_{2\indexLink} = -\vectorial{\discreteVelocityLetter}_{2\indexLink + 1} \in \latticeVelocity\relatives^{\spatialDimensionality}, \qquad \indexLink \in \integerInterval{1}{\numberLinks}.
    \end{equation}
    \item To each discrete velocity $\vectorial{\discreteVelocityLetter}_{\indexVelocity}$ with $\indexVelocity\in\integerInterval{1}{\numberVelocities}$, we tie a distribution function $\distributionFunction_{\indexVelocity}$.
    For a link $\indexLink \in \integerInterval{1}{\numberLinks}$, one can consider symmetric and anti-symmetric decompositions ($ \symmetricDistributionFunction_{1} = \distributionFunction_1$ and $\antiSymmetricDistributionFunction_1 = 0$ are understood):
    \begin{equation*}
        \symmetricDistributionFunction_{2\indexLink} = \symmetricDistributionFunction_{2\indexLink + 1} \definitionEquality \tfrac{1}{2}(\distributionFunction_{2\indexLink} + \distributionFunction_{2\indexLink + 1}), \qquad \antiSymmetricDistributionFunction_{2\indexLink} = - \antiSymmetricDistributionFunction_{2\indexLink + 1} \definitionEquality \tfrac{1}{2}(\distributionFunction_{2\indexLink} - \distributionFunction_{2\indexLink + 1}).
    \end{equation*}
    \item For each discrete velocity indexed by $\indexVelocity\in\integerInterval{1}{\numberVelocities}$, we consider its equilibrium $\distributionFunctionLetter_{\indexVelocity}^{\atEquilibrium}: \reals \to \reals$.
    Its symmetric and anti-symmetric parts are defined analogously to discrete distribution functions.
    \item Finally, two relaxation parameters are present, one for the symmetric part of the distribution functions, called $\relaxationParameterSymmetric \in (0, 2]$, and one for the anti-symmetric part, denoted by $\relaxationParameterAntiSymmetric \in (0, 2]$.
\end{itemize}
In what follows, we try to be as consistent as possible regarding the use of indices. Indeed, $\indexDirection\in\integerInterval{1}{\spatialDimensionality}$ designates a Cartesian direction, $\vectorial{\indexSpace}\in\relatives^{\spatialDimensionality}$ the discrete space, $\indexTime\in\naturals$ the discrete time, $\indexVelocity\in\integerInterval{1}{\numberVelocities}$ the discrete velocity, and $\indexLink\in\integerInterval{1}{\numberLinks}$ a link.

\subsection{Algorithm}

\subsubsection{Initialization}

We initialize data at equilibrium, hence use 
\begin{equation}\label{eq:initializationEquilibrium}
    \distributionFunction_{\indexVelocity,\vectorial{\indexSpace}}^0 = \distributionFunctionLetter_{\indexVelocity}^{\atEquilibrium}\Bigl ( \dashint_{\cell{\vectorial{\indexSpace}}} \conservedVariable^{\initial}(\vectorial{\spaceVariable})\differential\vectorial{\spaceVariable} \Bigr), \qquad \indexVelocity\in\integerInterval{1}{\numberVelocities}.
\end{equation}
Let us note that the important question of how to initialize lattice Boltzmann schemes is addressed, in the linear framework, in \cite{bellotti2024initialisation} and the references therein. Without delving into alternative initialization strategies beyond \eqref{eq:initializationEquilibrium}, we highlight two key facts. 
First, use \eqref{eq:initializationEquilibrium} is the natural and straightforward choice, as it requires no additional knowledge about the structure of the numerical scheme, and is standard in the framework of kinetic schemes, \confer{} \cite{aregba2000discrete}, which share similarities with the lattice Boltzmann schemes. 
Second, this approach is sufficient with first-order schemes, see \cite{bellotti2024initialisation}, which is typically the case for monotone schemes (\cite[Theorem 3.1]{godlewski1991hyperbolic}) that we look at.
Considering higher-order corrections in $\bigO{\spaceStep}, \bigO{\spaceStep^2}, \dots$ in \eqref{eq:initializationEquilibrium} is useful solely with high-order schemes, \confer{} \cite{chen2024unified}.

\subsubsection{Collide-and-stream}

Once the initialization \eqref{eq:initializationEquilibrium} is provided, the algorithm proceeds in the following way, for every $\indexTime\in\naturals$ and $\vectorial{\indexSpace}\in\relatives^{\spatialDimensionality}$.

\paragraph{Relaxation}
Local to each point of the mesh.
Set $\conservedVariableDiscrete_{\vectorial{\indexSpace}}^{\indexTime} = \sum_{\indexVelocity = 1}^{\indexVelocity=\numberVelocities}\distributionFunction_{\indexVelocity, \vectorial{\indexSpace}}^{\indexTime}$ and perform $\distributionFunction_{\indexVelocity, \vectorial{\indexSpace}}^{\indexTime, \collided} = \distributionFunction_{\indexVelocity, \vectorial{\indexSpace}}^{\indexTime} + \relaxationParameterSymmetric (\symmetricDistributionFunctionLetter_{\indexVelocity}^{\atEquilibrium}(\conservedVariableDiscrete_{\vectorial{\indexSpace}}^{\indexTime}) - \symmetricDistributionFunction_{\indexVelocity, \vectorial{\indexSpace}}^{\indexTime}  ) +  \relaxationParameterAntiSymmetric (\antiSymmetricDistributionFunctionLetter_{\indexVelocity}^{\atEquilibrium}(\conservedVariableDiscrete_{\vectorial{\indexSpace}}^{\indexTime}) - \antiSymmetricDistributionFunction_{\indexVelocity, \vectorial{\indexSpace}}^{\indexTime}  )$ for $\indexVelocity\in\integerInterval{1}{\numberVelocities}$.
Recall that $\relaxationParameterSymmetric, \relaxationParameterAntiSymmetric \in (0, 2]$.
Otherwise written, the relaxation reads
\begin{alignat*}{1}
    \distributionFunction_{1, \vectorial{\indexSpace}}^{\indexTime, \collided} &=  ( 1 - \relaxationParameterSymmetric ) \distributionFunction_{1, \vectorial{\indexSpace}}^{\indexTime} + \relaxationParameterSymmetric  \distributionFunctionLetter_{1}^{\atEquilibrium}(\conservedVariableDiscrete_{\vectorial{\indexSpace}}^{\indexTime}), \\
    \distributionFunction_{2\indexLink, \vectorial{\indexSpace}}^{\indexTime, \collided} &= \bigl ( 1 - \tfrac{1}{2}(\relaxationParameterSymmetric + \relaxationParameterAntiSymmetric) \bigr ) \distributionFunction_{2\indexLink, \vectorial{\indexSpace}}^{\indexTime} + \tfrac{1}{2}(\relaxationParameterSymmetric + \relaxationParameterAntiSymmetric)  \distributionFunctionLetter_{2\indexLink}^{\atEquilibrium}(\conservedVariableDiscrete_{\vectorial{\indexSpace}}^{\indexTime}) - \tfrac{1}{2} (\relaxationParameterSymmetric - \relaxationParameterAntiSymmetric) (\distributionFunction_{2\indexLink + 1, \vectorial{\indexSpace}}^{\indexTime} - \distributionFunctionLetter_{2\indexLink + 1}^{\atEquilibrium}(\conservedVariableDiscrete_{\vectorial{\indexSpace}}^{\indexTime})), \\
    \distributionFunction_{2\indexLink + 1, \vectorial{\indexSpace}}^{\indexTime, \collided} &= \bigl ( 1 - \tfrac{1}{2}(\relaxationParameterSymmetric + \relaxationParameterAntiSymmetric) \bigr ) \distributionFunction_{2\indexLink + 1, \vectorial{\indexSpace}}^{\indexTime} + \tfrac{1}{2}(\relaxationParameterSymmetric + \relaxationParameterAntiSymmetric)  \distributionFunctionLetter_{2\indexLink + 1}^{\atEquilibrium}(\conservedVariableDiscrete_{\vectorial{\indexSpace}}^{\indexTime}) - \tfrac{1}{2} (\relaxationParameterSymmetric - \relaxationParameterAntiSymmetric) (\distributionFunction_{2\indexLink, \vectorial{\indexSpace}}^{\indexTime} - \distributionFunctionLetter_{2\indexLink}^{\atEquilibrium}(\conservedVariableDiscrete_{\vectorial{\indexSpace}}^{\indexTime})), 
\end{alignat*}
for $\indexLink\in\integerInterval{1}{\numberLinks}$.
For future use, the relaxation operator for the $\indexVelocity$-th distribution function is denoted by $\collisionOperator_{\indexVelocity}: \reals^{\numberVelocities}\to\reals$, with $\indexVelocity\in\integerInterval{1}{\numberVelocities}$. It is a non-linear function of the $\numberVelocities$ distribution functions such that, for $\indexVelocity\in\integerInterval{1}{\numberVelocities}$
\begin{equation}\label{eq:collision}
    \distributionFunction_{\indexVelocity, \vectorial{\indexSpace}}^{\indexTime,\collided} = \collisionOperator_{\indexVelocity}(\distributionFunction_{1, \vectorial{\indexSpace}}^{\indexTime}, \dots, \distributionFunction_{\numberVelocities, \vectorial{\indexSpace}}^{\indexTime}).
\end{equation}
\begin{remark}[BGK]
    Selecting $\relaxationParameterSymmetric = \relaxationParameterAntiSymmetric$, we obtain that $\distributionFunction_{\indexVelocity, \vectorial{\indexSpace}}^{\indexTime, \collided} =  ( 1 - \relaxationParameterSymmetric ) \distributionFunction_{\indexVelocity, \vectorial{\indexSpace}}^{\indexTime} + \relaxationParameterSymmetric  \distributionFunctionLetter_{\indexVelocity}^{\atEquilibrium}(\conservedVariableDiscrete_{\vectorial{\indexSpace}}^{\indexTime})$ for $\indexVelocity\in\integerInterval{1}{\numberVelocities}$, thus a BGK scheme.
    This setting has been analyzed in \cite{aregba2024convergence}, and forces both relaxation parameters to evolve in the same direction.
\end{remark}
\begin{remark}[Magic combination]
    When the magic combination $\relaxationParameterSymmetric + \relaxationParameterAntiSymmetric = 2$, extensively studied in \cite{ginzburg2008two, dellar2023magic, bellotti2024initialisation}, and \cite[Chapter 9, Section 4]{bellotti2023numerical}, holds---the relaxation becomes
    \begin{align*}
        \distributionFunction_{1, \vectorial{\indexSpace}}^{\indexTime, \collided} =  ( 1 - \relaxationParameterSymmetric ) \distributionFunction_{1, \vectorial{\indexSpace}}^{\indexTime} + \relaxationParameterSymmetric  \distributionFunctionLetter_{1}^{\atEquilibrium}(\conservedVariableDiscrete_{\vectorial{\indexSpace}}^{\indexTime}), \qquad
        \distributionFunction_{2\indexLink, \vectorial{\indexSpace}}^{\indexTime, \collided} &= \distributionFunctionLetter_{2\indexLink}^{\atEquilibrium}(\conservedVariableDiscrete_{\vectorial{\indexSpace}}^{\indexTime}) + (1-\relaxationParameterSymmetric) (\distributionFunction_{2\indexLink + 1, \vectorial{\indexSpace}}^{\indexTime} - \distributionFunctionLetter_{2\indexLink + 1}^{\atEquilibrium}(\conservedVariableDiscrete_{\vectorial{\indexSpace}}^{\indexTime})), \\
        \distributionFunction_{2\indexLink + 1, \vectorial{\indexSpace}}^{\indexTime, \collided} &=   \distributionFunctionLetter_{2\indexLink + 1}^{\atEquilibrium}(\conservedVariableDiscrete_{\vectorial{\indexSpace}}^{\indexTime}) + (1-\relaxationParameterSymmetric) (\distributionFunction_{2\indexLink, \vectorial{\indexSpace}}^{\indexTime} - \distributionFunctionLetter_{2\indexLink}^{\atEquilibrium}(\conservedVariableDiscrete_{\vectorial{\indexSpace}}^{\indexTime})), 
    \end{align*}
for $\indexLink\in\integerInterval{1}{\numberLinks}$.
This peculiar structure entails, \emph{inter alia}, a simple spectral structure of the scheme \cite{bellotti2024initialisation}.
This bond between relaxation parameters makes one of them decrease when the other increases, differently from the BGK approach.
\end{remark}

\paragraph{Transport}

Non-local but linear, made up of shifts on the grid according to the discrete velocities $\vectorial{\discreteVelocityLetter}_1, \dots, \vectorial{\discreteVelocityLetter}_{\numberVelocities}$, and which does not mingle the distribution functions:
\begin{equation}\label{eq:transport}
    \distributionFunction_{\indexVelocity, \vectorial{\indexSpace}}^{\indexTime + 1} = \distributionFunction_{\indexVelocity, \vectorial{\indexSpace} - \vectorial{\discreteVelocityLetter}_{\indexVelocity}/\latticeVelocity}^{\indexTime, \collided}.
\end{equation}

\subsection{Parameters ensuring consistency}

The algorithm is presented without a clear explanation of its connection to \eqref{eq:conservationLaw}, which raises the question of how to appropriately select the relaxation parameters $\relaxationParameterSymmetric$ and $\relaxationParameterAntiSymmetric$, the discrete velocities, and the equilibria. The following result demonstrates that a suitable choice of discrete velocities and equilibria is sufficient to ensure consistency. At this stage, consistency is considered for smooth solutions; the extension to weak solutions is addressed later.

\begin{proposition}[Consistency and modified equation]
    Let all parameters of the scheme be fixed as $\spaceStep$ goes to zero.
    Then, under the constraints
    \begin{equation}\label{eq:constraintsConsistency}
        \sum_{\indexVelocity = 1}^{\numberVelocities}\distributionFunctionLetter^{\atEquilibrium}_{\indexVelocity}(\conservedVariableDiscrete) = \conservedVariableDiscrete, \qquad \sum_{\indexLink = 1}^{\numberLinks} \discreteVelocityLetter_{2\indexLink, \indexDirection}(\distributionFunctionLetter_{2\indexLink}^{\atEquilibrium}(\conservedVariableDiscrete) - \distributionFunctionLetter_{2\indexLink + 1}^{\atEquilibrium}(\conservedVariableDiscrete)) = 2\sum_{\indexLink = 1}^{\numberLinks} \discreteVelocityLetter_{2\indexLink, \indexDirection}\antiSymmetricDistributionFunctionLetter_{2\indexLink}^{\atEquilibrium}(\conservedVariableDiscrete) = \flux_{\indexDirection}(\conservedVariableDiscrete),
    \end{equation}
    for $\indexDirection \in \integerInterval{1}{\spatialDimensionality}$, the numerical scheme is consistent, for smooth solutions, with \eqref{eq:conservationLaw}, according to the definitions by \cite{dubois2022nonlinear} and \cite{bellotti2023truncation}.
    Moreover, see \cite[Proposition 5]{dubois2022nonlinear} and \cite[Theorem 3.7]{bellotti2023truncation}, the modified equation up to second-order reads
    \begin{multline}
        \partial_{\timeVariable}\conservedVariable(\timeVariable, \vectorial{\spaceVariable}) + \sum\limits_{\indexDirection = 1}^{\spatialDimensionality} \partial_{\spaceVariable_{\indexDirection}}\flux_{\indexDirection}(\conservedVariable(\timeVariable, \vectorial{\spaceVariable})) = \frac{2\spaceStep}{\latticeVelocity} \Bigl ( \frac{1}{\relaxationParameterAntiSymmetric} - \frac{1}{2}\Bigr ) \\
        \times \sum_{\indexLink=1}^{\numberLinks} \sum_{\indexDirection=1}^{\spatialDimensionality} \discreteVelocityLetter_{2\indexLink, \indexDirection}\partial_{\spaceVariable_{\indexDirection}} \sum_{\tilde{\indexDirection}=1}^{\spatialDimensionality}  \Bigl ( \discreteVelocityLetter_{2\indexLink, \tilde{\indexDirection}}\partial_{\spaceVariable_{\tilde{\indexDirection}}}\symmetricDistributionFunctionLetter_{2\indexLink}^{\atEquilibrium}(\conservedVariable(\timeVariable, \vectorial{\spaceVariable})) - \frac{\differential\antiSymmetricDistributionFunctionLetter_{2\indexLink}^{\atEquilibrium}(\conservedVariable(\timeVariable, \vectorial{\spaceVariable}))}{\differential\conservedVariable}  \partial_{\spaceVariable_{\tilde{\indexDirection}}}\flux_{\tilde{\indexDirection}}(\conservedVariable(\timeVariable, \vectorial{\spaceVariable})) \Bigr ) + \bigO{\spaceStep^2}.\label{eq:modifiedEquation}
    \end{multline}
\end{proposition}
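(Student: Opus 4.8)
The plan is to derive both assertions from a single Taylor expansion of the fully composed one-step map in powers of $\spaceStep$ (with $\timeStep = \spaceStep/\latticeVelocity$ and $\latticeVelocity$ fixed), treating the grid quantities as smooth fields $\distributionFunction_i(t, \bm{x})$; this is the equivalent-equation standpoint of \cite{dubois2022nonlinear} and \cite{bellotti2023truncation}. Composing relaxation \eqref{eq:collision} and transport \eqref{eq:transport}, the update reads $\distributionFunction_i(t+\timeStep, \bm{x}) = \distributionFunction_i^{\collided}(t, \bm{x}-\bm{c}_i\timeStep)$, so that summing over $i$ and using $\sum_i \distributionFunction_i = u$ yields the exact balance $u(t+\timeStep,\bm{x}) = \sum_{i=1}^{q}\distributionFunction_i^{\collided}(t, \bm{x}-\bm{c}_i\timeStep)$. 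The first fact I would record is that relaxation conserves the moment, $\sum_i \distributionFunction_i^{\collided} = u$: this follows from the normalization $\sum_i \distributionFunctionLetter_i^{\atEquilibrium} = u$ in \eqref{eq:constraintsConsistency} together with $\sum_i \antiSymmetricDistributionFunction_i = \sum_i \antiSymmetricDistributionFunctionLetter_i^{\atEquilibrium} = 0$, and it is exactly what annihilates the $\bigO{1}$ term of the expansion.

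For consistency, I expand each shifted collided distribution to first order and sum. The $\bigO{1}$ term cancels by conservation, leaving
\[
  \frac{u(t+\timeStep,\bm{x})-u(t,\bm{x})}{\timeStep} = -\sum_{k} \partial_{x_k}\Bigl(\sum_{i} c_{i,k}\,\distributionFunction_i^{\collided}\Bigr) + \bigO{\timeStep}.
\]
The odd moment on the right is purely anti-symmetric: since $\bm{c}_1 = \bm{0}$ and $\bm{c}_{2\ell} = -\bm{c}_{2\ell+1}$, and since subtracting the two link-relaxation identities shows that the anti-symmetric part relaxes at the single rate $\relaxationParameterAntiSymmetric$, namely $\antiSymmetricDistributionFunction_{2\ell}^{\collided} = (1-\relaxationParameterAntiSymmetric)\antiSymmetricDistributionFunction_{2\ell} + \relaxationParameterAntiSymmetric\,\antiSymmetricDistributionFunctionLetter_{2\ell}^{\atEquilibrium}(u)$, one obtains $\sum_i c_{i,k}\distributionFunction_i^{\collided} = 2\sum_\ell c_{2\ell,k}\bigl[(1-\relaxationParameterAntiSymmetric)\antiSymmetricDistributionFunction_{2\ell} + \relaxationParameterAntiSymmetric\,\antiSymmetricDistributionFunctionLetter_{2\ell}^{\atEquilibrium}(u)\bigr]$. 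At leading order the state sits at equilibrium, so this reduces to $2\sum_\ell c_{2\ell,k}\antiSymmetricDistributionFunctionLetter_{2\ell}^{\atEquilibrium}(u) = \flux_k(u)$ by the second constraint in \eqref{eq:constraintsConsistency}, which yields $\partial_t u + \sum_k \partial_{x_k}\flux_k(u) = \bigO{\timeStep}$ and establishes consistency.

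For the modified equation I would push the expansion to second order and slave the non-equilibrium part via a Chapman--Enskog ansatz $\antiSymmetricDistributionFunction_{2\ell} = \antiSymmetricDistributionFunctionLetter_{2\ell}^{\atEquilibrium}(u) + \timeStep\,\antiSymmetricDistributionFunction_{2\ell}^{(1)} + \bigO{\timeStep^2}$. Taking the odd combination of the two transported link equations and expanding gives $\antiSymmetricDistributionFunction_{2\ell}(t+\timeStep) = \antiSymmetricDistributionFunction_{2\ell}^{\collided} - \timeStep\sum_k c_{2\ell,k}\partial_{x_k}\symmetricDistributionFunctionLetter_{2\ell}^{\atEquilibrium}(u) + \bigO{\timeStep^2}$, whose $\bigO{\timeStep}$ balance, after eliminating $\partial_t u$ through the leading-order law, produces $\antiSymmetricDistributionFunction_{2\ell}^{(1)} = \tfrac{1}{\relaxationParameterAntiSymmetric}\sum_k\bigl(\tfrac{\differential \antiSymmetricDistributionFunctionLetter_{2\ell}^{\atEquilibrium}}{\differential u}\partial_{x_k}\flux_k(u) - c_{2\ell,k}\partial_{x_k}\symmetricDistributionFunctionLetter_{2\ell}^{\atEquilibrium}(u)\bigr)$. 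I would then collect the three $\bigO{\timeStep}$ corrections to the balance: (i) the temporal term $-\tfrac{\timeStep}{2}\partial_t^2 u$, rewritten via the leading-order law and $\flux_k'(u) = 2\sum_\ell c_{2\ell,k}\tfrac{\differential\antiSymmetricDistributionFunctionLetter_{2\ell}^{\atEquilibrium}}{\differential u}$ as $-\timeStep\sum_k\partial_{x_k}\sum_\ell c_{2\ell,k}\tfrac{\differential\antiSymmetricDistributionFunctionLetter_{2\ell}^{\atEquilibrium}}{\differential u}\sum_{\tilde{k}}\partial_{x_{\tilde{k}}}\flux_{\tilde{k}}(u)$; (ii) the even spatial moment $\tfrac{\timeStep}{2}\sum_{k,\tilde{k}}\partial_{x_k}\partial_{x_{\tilde{k}}}\bigl(\sum_i c_{i,k}c_{i,\tilde{k}}\distributionFunction_i^{\collided}\bigr)$, equal at leading order to $\timeStep\sum_{k,\tilde{k}}\partial_{x_k}\partial_{x_{\tilde{k}}}\bigl(\sum_\ell c_{2\ell,k}c_{2\ell,\tilde{k}}\symmetricDistributionFunctionLetter_{2\ell}^{\atEquilibrium}(u)\bigr)$; and (iii) the non-equilibrium flux correction $-2\timeStep(1-\relaxationParameterAntiSymmetric)\sum_k\partial_{x_k}\sum_\ell c_{2\ell,k}\antiSymmetricDistributionFunction_{2\ell}^{(1)}$. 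Substituting $\antiSymmetricDistributionFunction_{2\ell}^{(1)}$ and summing, the coefficients of the two resulting tensors combine into the single prefactor $2\timeStep(\tfrac{1}{\relaxationParameterAntiSymmetric}-\tfrac12) = \tfrac{2\spaceStep}{\latticeVelocity}(\tfrac{1}{\relaxationParameterAntiSymmetric}-\tfrac12)$, reproducing exactly the right-hand side of \eqref{eq:modifiedEquation}.

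The main obstacle is bookkeeping rather than conceptual: one must perform the time-derivative eliminations consistently to the right order and bring the three contributions to a common form before adding them, after which a nontrivial cancellation collapses them onto the stated tensor. The delicate point is that $\relaxationParameterSymmetric$ disappears from the second-order equation; this must be checked, and it holds because the non-equilibrium symmetric part enters only through terms already carrying a factor $\timeStep$, hence at $\bigO{\timeStep^2}$, so that the link symmetry $\bm{c}_{2\ell} = -\bm{c}_{2\ell+1}$ cleanly separates the odd (flux-carrying) moment governed by $\relaxationParameterAntiSymmetric$ from the even (diffusion-carrying) moment. Alternatively, once the scheme is written in moment form, the whole computation can be handed to the general truncation-error formula of \cite[Theorem 3.7]{bellotti2023truncation} or the equivalent-equation result of \cite[Proposition 5]{dubois2022nonlinear}, with \eqref{eq:constraintsConsistency} supplying the equilibrium moments; the derivation above is precisely the specialization of those formulas to the TRT relaxation.
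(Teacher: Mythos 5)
Your proposal is correct, and I verified its pivotal computations: the relaxation does conserve the moment; subtracting the two link relaxation equations indeed gives $\antiSymmetricDistributionFunction_{2\indexLink}^{\collided} = (1-\relaxationParameterAntiSymmetric)\antiSymmetricDistributionFunction_{2\indexLink} + \relaxationParameterAntiSymmetric\antiSymmetricDistributionFunctionLetter_{2\indexLink}^{\atEquilibrium}(\conservedVariable)$, so the flux-carrying odd moment sees only $\relaxationParameterAntiSymmetric$; your expression for $\antiSymmetricDistributionFunction_{2\indexLink}^{(1)}$ is exactly what the $\bigO{\timeStep}$ balance yields; and your three contributions do collapse onto the right-hand side of \eqref{eq:modifiedEquation}, since $1 + 2(1-\relaxationParameterAntiSymmetric)/\relaxationParameterAntiSymmetric = (2-\relaxationParameterAntiSymmetric)/\relaxationParameterAntiSymmetric = 2\bigl(\tfrac{1}{\relaxationParameterAntiSymmetric}-\tfrac{1}{2}\bigr)$. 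Where you differ from the paper: the paper gives no in-text derivation at all and proves the proposition purely by citation, invoking the general equivalent-equation result of \cite[Proposition 5]{dubois2022nonlinear} and the truncation-error theorem of \cite[Theorem 3.7]{bellotti2023truncation}, with \eqref{eq:constraintsConsistency} supplying the needed moments; your closing remark that the computation is ``precisely the specialization of those formulas'' is, in effect, the paper's entire proof. Your explicit route buys something the citation does not: it isolates the TRT-specific mechanism behind the statement---the anti-symmetric (flux) moment relaxes at the single rate $\relaxationParameterAntiSymmetric$, while the symmetric non-equilibrium part enters only at $\bigO{\timeStep^2}$---which is exactly why $\relaxationParameterSymmetric$ is absent from the second-order equation, a fact the paper only asserts. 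Conversely, the paper's route is shorter and delegates rigor: your Chapman--Enskog ansatz $\antiSymmetricDistributionFunction_{2\indexLink} = \antiSymmetricDistributionFunctionLetter_{2\indexLink}^{\atEquilibrium}(\conservedVariable) + \timeStep\,\antiSymmetricDistributionFunction_{2\indexLink}^{(1)} + \bigO{\timeStep^2}$ is a formal closure assumption (appropriate for prepared, equilibrium-initialized, smooth data), whereas the cited truncation-error framework obtains the same expansion without postulating any ansatz. For a proposition stated ``for smooth solutions'' this formal level is the accepted standard, but the ansatz is the one place in your argument where rigor is traded for transparency.
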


Although \strong{modified equations} \cite{warming1974modified} are derived for smooth solutions, it is known \cite{teng2010error} that, at least in the linear one-dimensional case, they provide valuable insights into monotone schemes even when the \strong{initial data are not smooth}.

\begin{remark}[Magic combination]
    In the case where $\relaxationParameterSymmetric + \relaxationParameterAntiSymmetric = 2$, the situation becomes even clearer and does not require the use of Taylor expansions. Specifically, \cite{ginzburg:tel-02591565}, later reformulated by \cite{bellotti2022finite}, demonstrates that $\conservedVariableDiscrete_{\vectorial{\indexSpace}}^{\indexTime}$, obtained from the lattice Boltzmann scheme, satisfies the following two-steps Finite Difference scheme:
    \begin{multline*}
        \frac{1}{\relaxationParameterAntiSymmetric\timeStep} (\conservedVariableDiscrete_{\vectorial{\indexSpace}}^{\indexTime + 1} + (\relaxationParameterAntiSymmetric-2)\conservedVariableDiscrete_{\vectorial{\indexSpace}}^{\indexTime} + (1-\relaxationParameterAntiSymmetric)\conservedVariableDiscrete_{\vectorial{\indexSpace}}^{\indexTime-1}) = \frac{2\latticeVelocity}{\spaceStep} \sum_{\indexLink = 1}^{\numberLinks} (\antiSymmetricDistributionFunctionLetter_{2\indexLink}^{\atEquilibrium}(\conservedVariableDiscrete_{\vectorial{\indexSpace}-\vectorial{\discreteVelocityLetter}_{2\indexLink}/\latticeVelocity}^{\indexTime}) - \antiSymmetricDistributionFunctionLetter_{2\indexLink}^{\atEquilibrium}(\conservedVariableDiscrete_{\vectorial{\indexSpace}+\vectorial{\discreteVelocityLetter}_{2\indexLink}/\latticeVelocity}^{\indexTime}))\\
        + \Bigl( \frac{1}{\relaxationParameterAntiSymmetric} - \frac{1}{2}\Bigr )\frac{\latticeVelocity}{\spaceStep} \sum_{\indexLink = 1}^{\numberLinks} ( \symmetricDistributionFunctionLetter_{2\indexLink}^{\atEquilibrium}(\conservedVariableDiscrete_{\vectorial{\indexSpace}-\vectorial{\discreteVelocityLetter}_{2\indexLink}/\latticeVelocity}^{\indexTime}) - 2\symmetricDistributionFunctionLetter_{2\indexLink}^{\atEquilibrium}(\conservedVariableDiscrete_{\vectorial{\indexSpace}}^{\indexTime})+ \symmetricDistributionFunctionLetter_{2\indexLink}^{\atEquilibrium}(\conservedVariableDiscrete_{\vectorial{\indexSpace}+\vectorial{\discreteVelocityLetter}_{2\indexLink}/\latticeVelocity}^{\indexTime})).
    \end{multline*}
    The left-hand side is a time integrator---second-order accurate when $\relaxationParameterAntiSymmetric = 2$.
    The first term on the right-hand side is consistent, under \eqref{eq:constraintsConsistency}, with minus the flux of \eqref{eq:conservationLaw}.
    Finally, the last term on the right-hand side is a link-wise dissipation term, which can be made small when $\relaxationParameterAntiSymmetric\to 2$.
\end{remark}

\subsection{Link with relaxation systems}

Let us finish the section by stressing that the numerical scheme can be seen as a discretization of the following relaxation system
\begin{equation*}
    \begin{cases}
        \partial_{\timeVariable} \distributionFunctionLetter_1 = \frac{1}{\relaxationTimeSymmetric}(\distributionFunctionLetter_1^{\atEquilibrium}(\conservedVariable)-\distributionFunctionLetter_1), \\
        \partial_{\timeVariable} \distributionFunctionLetter_{2\indexLink} + \vectorial{\discreteVelocityLetter}_{2\indexLink}\cdot \nabla_{\vectorial{\spaceVariable}}\distributionFunctionLetter_{2\indexLink} = \tfrac{1}{2}\bigl ( \frac{1}{\relaxationTimeSymmetric} +  \frac{1}{\relaxationTimeAntiSymmetric}\bigr ) (\distributionFunctionLetter_{2\indexLink}^{\atEquilibrium}(\conservedVariable) - \distributionFunctionLetter_{2\indexLink}) + \tfrac{1}{2}\bigl ( \frac{1}{\relaxationTimeSymmetric} -  \frac{1}{\relaxationTimeAntiSymmetric}\bigr ) ( \distributionFunctionLetter_{2\indexLink+1}^{\atEquilibrium}(\conservedVariable) - \distributionFunctionLetter_{2\indexLink+1}), \\
        \partial_{\timeVariable} \distributionFunctionLetter_{2\indexLink + 1} - \vectorial{\discreteVelocityLetter}_{2\indexLink}\cdot \nabla_{\vectorial{\spaceVariable}}\distributionFunctionLetter_{2\indexLink + 1} = \tfrac{1}{2}\bigl ( \frac{1}{\relaxationTimeSymmetric} +  \frac{1}{\relaxationTimeAntiSymmetric}\bigr ) (\distributionFunctionLetter_{2\indexLink+1}^{\atEquilibrium}(\conservedVariable) - \distributionFunctionLetter_{2\indexLink+1}) + \tfrac{1}{2}\bigl ( \frac{1}{\relaxationTimeSymmetric} -  \frac{1}{\relaxationTimeAntiSymmetric}\bigr ) (\distributionFunctionLetter_{2\indexLink}^{\atEquilibrium}(\conservedVariable)-\distributionFunctionLetter_{2\indexLink}),
    \end{cases}
\end{equation*}
with $\indexLink\in\integerInterval{1}{\numberLinks}$ and $\conservedVariable=\sum_{\indexVelocity=1}^{\indexVelocity=\numberVelocities}\distributionFunctionLetter_{\indexVelocity}$, if the left-hand side is discretized with any one-step consistent scheme, and the right-hand side using an explicit Euler method.
This holds upon identifying $\relaxationParameterSymmetric = \timeStep/\relaxationTimeSymmetric$ and $\relaxationParameterAntiSymmetric = \timeStep/\relaxationTimeAntiSymmetric$.
Using symmetric and anti-symmetric parts as in the discrete setting, we obtain, using \eqref{eq:constraintsConsistency}, the equivalent form 
\begin{equation*}
    \begin{cases}
        \partial_{\timeVariable} \conservedVariable + 2 \sum_{\indexLink=1}^{\indexLink = \numberLinks} \vectorial{\discreteVelocityLetter}_{2\indexLink}\cdot \nabla_{\vectorial{\spaceVariable}} a_{2\indexLink} = 0, \\
        \partial_{\timeVariable} s_{2\indexLink} + \vectorial{\discreteVelocityLetter}_{2\indexLink}\cdot \nabla_{\vectorial{\spaceVariable}} a_{2\indexLink} = \frac{1}{\relaxationTimeSymmetric}(s_{2\indexLink}^{\atEquilibrium}(\conservedVariable)-s_{2\indexLink}), \\
        \partial_{\timeVariable} a_{2\indexLink} + \vectorial{\discreteVelocityLetter}_{2\indexLink}\cdot \nabla_{\vectorial{\spaceVariable}} s_{2\indexLink} = \frac{1}{\relaxationTimeAntiSymmetric}(a_{2\indexLink}^{\atEquilibrium}(\conservedVariable)-a_{2\indexLink}),\qquad \indexLink\in\integerInterval{1}{\numberLinks}.
    \end{cases}
\end{equation*}

\section{Monotonicity}\label{sec:monotonicity}

\subsection{Further assumptions}

Assume that the equilibria split into a linear and a non-linear part proportional to the fluxes of \eqref{eq:conservationLaw}:
\begin{equation}\label{eq:equilibriaForm}
    \distributionFunctionLetter_{\indexVelocity}^{\atEquilibrium}(\conservedVariableDiscrete) = \equilibriumCoefficientLinear_{\indexVelocity} \conservedVariableDiscrete + \sum_{\indexDirection = 1}^{\spatialDimensionality}\equilibriumCoefficientFlux_{\indexVelocity, \indexDirection}\flux_{\indexDirection}(\conservedVariableDiscrete), \qquad \indexVelocity\in\integerInterval{1}{\numberVelocities}.
\end{equation}
This assumption is also made in \cite{aregba2024convergence}, and can be traced back (at least) to \cite[Equation (2.22)]{natalini1998discrete} and \cite[Equation (3.34)]{bouchut1999construction} in the context of (BGK) relaxation models.
Introducing more general kinds of non-linearities would, on the one hand, prevent from obtaining the forthcoming results for a broad class of numerical schemes, and, on the other hand, poses the interesting yet difficult problem of finding a criterion to devise these terms. For the interested readers, we highlight that kinetic schemes which do not rely on equilibria of the form \eqref{eq:equilibriaForm}, for instance ``flux-decomposition schemes'', have been investigated in \cite{aregba2000discrete}.
The constraints \eqref{eq:constraintsConsistency} thus become
\begin{align}
    \sum_{\indexVelocity = 1}^{\numberVelocities} \equilibriumCoefficientLinear_{\indexVelocity} &= 1, \qquad \sum_{\indexVelocity = 1}^{\numberVelocities} \equilibriumCoefficientFlux_{\indexVelocity, \indexDirection} = 0, \quad \indexDirection\in\integerInterval{1}{\spatialDimensionality}, \nonumber\\
    \sum_{\indexLink = 1}^{\numberLinks}\discreteVelocityLetter_{2\indexLink, \indexDirection} (\equilibriumCoefficientLinear_{2\indexLink} - \equilibriumCoefficientLinear_{2\indexLink + 1}) &= 0, \qquad 
    \sum_{\indexLink = 1}^{\numberLinks}\discreteVelocityLetter_{2\indexLink, \indexDirection} (\equilibriumCoefficientFlux_{2\indexLink, p} - \equilibriumCoefficientFlux_{2\indexLink + 1, p}) = \delta_{\indexDirection, p},\label{eq:constraintsConsistency2}
\end{align}
for $\indexDirection, p \in\integerInterval{1}{\spatialDimensionality}$.
It is also natural to request some symmetry along links, namely that 
\begin{equation}
    \equilibriumCoefficientLinear_{2\indexLink} = \equilibriumCoefficientLinear_{2\indexLink + 1} \qquad\text{and}\qquad \equilibriumCoefficientFlux_{2\indexLink, \indexDirection} = - \equilibriumCoefficientFlux_{2\indexLink + 1, \indexDirection} \geq 0,
\end{equation}
which we always assume in what follows.
Then, \eqref{eq:constraintsConsistency2} becomes
\begin{equation}\label{eq:constraintsConsistency3}
    \equilibriumCoefficientLinear_{1} + 2 \sum_{\indexLink = 1}^{\numberLinks}  \equilibriumCoefficientLinear_{2\indexLink}  = 1, \qquad  \equilibriumCoefficientFlux_{1, \indexDirection} = 0, 
    \qquad 2 \sum_{\indexLink = 1}^{\numberLinks}\discreteVelocityLetter_{2\indexLink, \indexDirection} \equilibriumCoefficientFlux_{2\indexLink, p}  = \delta_{\indexDirection, p}, \qquad \indexDirection, p \in\integerInterval{1}{\spatialDimensionality},
\end{equation}
whence the linear part of the equilibrium is symmetric, and the non-linear part is anti-symmetric.
The assumptions introduced in this section are assumed to hold throughout the paper and \strong{shall not be recalled anymore}.

\begin{remark}[On the case $\equilibriumCoefficientLinear_1 = 0$: a sort of \lbmScheme{\spatialDimensionality}{2\numberLinks} scheme]\label{rem:D1Q2}
    Let us discuss the case $\equilibriumCoefficientLinear_1 = 0$ in detail.
    Equation \eqref{eq:initializationEquilibrium} entails $\distributionFunction_{1, \vectorial{\indexSpace}}^{\indexTime} = 0$ for all $\indexTime\in\naturals$ and $\vectorial{\indexSpace}\in\relatives^{\spatialDimensionality}$: we could indeed avoid storing this unknown and $\conservedVariableDiscrete_{\vectorial{\indexSpace}}^{\indexTime}=\sum_{\indexVelocity=2}^{\indexVelocity = \numberVelocities} \distributionFunction_{\indexVelocity, \vectorial{\indexSpace}}^{\indexTime}$.
    The scheme practically becomes a \lbmScheme{\spatialDimensionality}{2\numberLinks}, with only an even number of pairwise opposed velocities, and no zero velocity.
\end{remark}

\subsection{Monotonicity of the relaxation}

Since one of the main aims of monotonicity is to ensure that the discrete solution remains within certain compact sets, we define $\maximumInitialDatum \definitionEquality \lVert \conservedVariable^{\circ} \rVert_{\lebesgueSpace{\infty}}$, so that the conserved moment stays within the interval $[-\maximumInitialDatum, \maximumInitialDatum]$. We also introduce the compact set 
\begin{equation*}
    \invariantCompactSetDistributions \definitionEquality \prod_{\indexVelocity = 1}^{\numberVelocities} [\distributionFunctionLetter_{\indexVelocity}^{\atEquilibrium}(-\maximumInitialDatum), \distributionFunctionLetter_{\indexVelocity}^{\atEquilibrium}(\maximumInitialDatum)],
\end{equation*}
where the distribution functions are to remain.
Notice that---thanks to \eqref{eq:constraintsConsistency}---if $(\distributionFunction_1, \dots, \distributionFunction_{\numberVelocities})\in\invariantCompactSetDistributions$, then $\conservedVariableDiscrete = \sum_{\indexVelocity=1}^{\indexVelocity = \numberVelocities}\distributionFunction_{\indexVelocity} \in [-\maximumInitialDatum, \maximumInitialDatum]$.

\begin{definition}[Monotone relaxation]\label{def:monotoneRelaxation}
    We say that the relaxation operator 
    \begin{equation*}
        \vectorial{\collisionOperator}(\distributionFunction_1, \dots, \distributionFunction_{\numberVelocities}) \definitionEquality 
        \begin{pmatrix}
            \collisionOperator_1(\distributionFunction_1, \dots, \distributionFunction_{\numberVelocities})\\
            \vdots\\
            \collisionOperator_{\numberVelocities}(\distributionFunction_1, \dots, \distributionFunction_{\numberVelocities})
        \end{pmatrix} : 
        \reals^{\numberVelocities}\to\reals^{\numberVelocities}
    \end{equation*}
    is monotone non-decreasing over $\invariantCompactSetDistributions$ if, for all $(\distributionFunction_1, \dots, \distributionFunction_{\numberVelocities})\in\invariantCompactSetDistributions$, it is non-decreasing with respect to each of its arguments.
\end{definition}

This definition of monotonicity is fundamentally different from the one used for Finite Difference/Volume schemes: in lattice Boltzmann schemes, the arguments of the scheme are not the same unknown at different grid points, but rather different distribution functions at different grid points (\confer{}  the transport step \eqref{eq:transport}). 
Since, under the assumptions made so far, $\vectorial{\collisionOperator} \in \continuousSpace{1}(\reals^{\numberVelocities})$, verifying \cref{def:monotoneRelaxation} reduces to ensuring that its Jacobian matrix has only non-negative entries.
\begin{proposition}[Monotonicity conditions]\label{prop:monotonicityConditions}
    Under the constraints 
    \begin{align}
        &\relaxationParameterSymmetric\equilibriumCoefficientLinear_1 \geq \max(0, \relaxationParameterSymmetric-1), \label{eq:zeroVelocityCondition}\\
        &\relaxationParameterAntiSymmetric \max_{\conservedVariableDiscrete\in[-\maximumInitialDatum, \maximumInitialDatum]}\Bigl | \sum_{\indexDirection = 1}^{\spatialDimensionality}\equilibriumCoefficientFlux_{2\indexLink, \indexDirection}\flux_{\indexDirection}' (\conservedVariableDiscrete) \Bigr | \leq \relaxationParameterSymmetric\equilibriumCoefficientLinear_{2\indexLink} + \tfrac{1}{2}\min (\underbrace{2-\relaxationParameterSymmetric-\relaxationParameterAntiSymmetric}_{\text{magic-vanish.}}, 0, \underbrace{\relaxationParameterAntiSymmetric-\relaxationParameterSymmetric}_{\text{BGK-van.}}), \label{eq:blockCondition}
    \end{align}
    for $\indexLink\in\integerInterval{1}{\numberLinks}$, the relaxation operator is monotone non-decreasing.
\end{proposition}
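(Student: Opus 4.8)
The plan is to use the fact, noted just before the statement, that $\vectorial{\collisionOperator}\in\continuousSpace{1}(\reals^{\numberVelocities})$, so monotonicity over $\invariantCompactSetDistributions$ is equivalent to requiring every entry of the Jacobian, $\partial\collisionOperator_{\indexVelocity}/\partial\distributionFunction_m\geq 0$, for all states in $\invariantCompactSetDistributions$. The crucial simplification is that $\conservedVariableDiscrete=\sum_m\distributionFunction_m$ depends on each distribution function with unit derivative, $\partial\conservedVariableDiscrete/\partial\distributionFunction_m=1$, while the equilibria enter only through $\conservedVariableDiscrete$. Using the ansatz \eqref{eq:equilibriaForm} with the link symmetries I would first record the equilibrium derivatives: $\differential\distributionFunctionLetter_1^{\atEquilibrium}/\differential\conservedVariableDiscrete=\equilibriumCoefficientLinear_1$ (since $\equilibriumCoefficientFlux_{1,\indexDirection}=0$ by \eqref{eq:constraintsConsistency3}), and $\differential\distributionFunctionLetter_{2\indexLink}^{\atEquilibrium}/\differential\conservedVariableDiscrete=\equilibriumCoefficientLinear_{2\indexLink}+F_{\indexLink}$, $\differential\distributionFunctionLetter_{2\indexLink+1}^{\atEquilibrium}/\differential\conservedVariableDiscrete=\equilibriumCoefficientLinear_{2\indexLink}-F_{\indexLink}$, where $F_{\indexLink}\definitionEquality\sum_{\indexDirection=1}^{\spatialDimensionality}\equilibriumCoefficientFlux_{2\indexLink,\indexDirection}\flux_{\indexDirection}'(\conservedVariableDiscrete)$, exploiting $\equilibriumCoefficientLinear_{2\indexLink}=\equilibriumCoefficientLinear_{2\indexLink+1}$ and $\equilibriumCoefficientFlux_{2\indexLink+1,\indexDirection}=-\equilibriumCoefficientFlux_{2\indexLink,\indexDirection}$.

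Next I would differentiate the three relaxation formulas. The zero-velocity row gives $\partial\collisionOperator_1/\partial\distributionFunction_1=1-\relaxationParameterSymmetric+\relaxationParameterSymmetric\equilibriumCoefficientLinear_1$ and $\partial\collisionOperator_1/\partial\distributionFunction_m=\relaxationParameterSymmetric\equilibriumCoefficientLinear_1$ for $m\neq 1$; non-negativity of both is exactly $\relaxationParameterSymmetric\equilibriumCoefficientLinear_1\geq\max(0,\relaxationParameterSymmetric-1)$, i.e.\ \eqref{eq:zeroVelocityCondition}. For the link row $2\indexLink$, collecting terms shows that in every column the coefficient of $\equilibriumCoefficientLinear_{2\indexLink}$ collapses to $\relaxationParameterSymmetric$ and that of $F_{\indexLink}$ to $\relaxationParameterAntiSymmetric$, so each entry has the shape $(\text{const})+\relaxationParameterSymmetric\equilibriumCoefficientLinear_{2\indexLink}+\relaxationParameterAntiSymmetric F_{\indexLink}$, with const equal to $1-\tfrac12(\relaxationParameterSymmetric+\relaxationParameterAntiSymmetric)$ on the diagonal $m=2\indexLink$, to $\tfrac12(\relaxationParameterAntiSymmetric-\relaxationParameterSymmetric)$ on the partner column $m=2\indexLink+1$, and to $0$ for every other $m$ (including $m=1$).

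I would then invoke the link symmetry: the row $2\indexLink+1$ is obtained from $2\indexLink$ by swapping $2\indexLink\leftrightarrow 2\indexLink+1$ together with $F_{\indexLink}\mapsto -F_{\indexLink}$, so its entries carry the same three constants but with $+\relaxationParameterAntiSymmetric F_{\indexLink}$ replaced by $-\relaxationParameterAntiSymmetric F_{\indexLink}$. Hence, for each constant-type, non-negativity of the matching entries in both rows $2\indexLink$ and $2\indexLink+1$ is equivalent to $\relaxationParameterAntiSymmetric|F_{\indexLink}|\leq(\text{const})+\relaxationParameterSymmetric\equilibriumCoefficientLinear_{2\indexLink}$. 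Taking the minimum over the three constants, and then the maximum of $|F_{\indexLink}|$ over $\conservedVariableDiscrete\in[-\maximumInitialDatum,\maximumInitialDatum]$—legitimate because the arguments range over $\invariantCompactSetDistributions$, forcing $\conservedVariableDiscrete\in[-\maximumInitialDatum,\maximumInitialDatum]$—reproduces \eqref{eq:blockCondition} exactly, the three constants matching the labels \emph{magic-vanish.}\ ($2-\relaxationParameterSymmetric-\relaxationParameterAntiSymmetric$), the trivial $0$, and \emph{BGK-van.}\ ($\relaxationParameterAntiSymmetric-\relaxationParameterSymmetric$), all carrying the shared factor $\tfrac12$.

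The differentiation and term-collecting are routine bookkeeping; the delicate point I expect is the sign handling of $F_{\indexLink}$. Because $F_{\indexLink}$ depends on $\conservedVariableDiscrete$ and may take either sign, and because the two rows of a single link produce precisely the two signs $\pm\relaxationParameterAntiSymmetric F_{\indexLink}$, the constraints must be imposed through $|F_{\indexLink}|$ and uniformly over the whole moment interval—this is where the absolute value and the supremum over $[-\maximumInitialDatum,\maximumInitialDatum]$ in \eqref{eq:blockCondition} genuinely arise, rather than a one-sided bound. The remaining care is to check that these three constants are the only ones that occur across all columns, so that the $\min$ in \eqref{eq:blockCondition} is both necessary and sufficient.
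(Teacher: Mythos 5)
Your proposal is correct and follows essentially the same route as the paper's proof: both reduce monotonicity over $\invariantCompactSetDistributions$ to non-negativity of the Jacobian entries, compute those entries explicitly from \eqref{eq:equilibriaForm} and the link symmetries, and pair the entries of the two rows of each link so that each pair is equivalent to a bound of the form $\relaxationParameterAntiSymmetric |F_{\indexLink}| \leq (\text{const}) + \relaxationParameterSymmetric\equilibriumCoefficientLinear_{2\indexLink}$ with exactly the three constants $1-\tfrac{1}{2}(\relaxationParameterSymmetric+\relaxationParameterAntiSymmetric)$, $-\tfrac{1}{2}(\relaxationParameterSymmetric-\relaxationParameterAntiSymmetric)$, and $0$ appearing in \eqref{eq:blockCondition}. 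Your final step---taking the minimum of the three right-hand sides and the maximum of $|F_{\indexLink}|$ over $\conservedVariableDiscrete\in[-\maximumInitialDatum, \maximumInitialDatum]$---is precisely how the paper concludes as well.
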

Notice that \eqref{eq:zeroVelocityCondition} is the constraint pertaining to the zero-velocity distribution function, whereas \eqref{eq:blockCondition} concern each link of opposite velocities present in the scheme.

\begin{remark}[BGK \emph{vs.} TRT]
    The zero-velocity condition \eqref{eq:zeroVelocityCondition}, which also applies in the BGK case, can be very restrictive; \idEst{}, it requires $\relaxationParameterSymmetric$ to be very close to one, particularly when $\equilibriumCoefficientLinear_1 \approx 0$. This is one of the advantages of TRT models: by taking $\relaxationParameterSymmetric < 1$ to satisfy \eqref{eq:zeroVelocityCondition}, one can still increase $\relaxationParameterAntiSymmetric$ above one using \eqref{eq:blockCondition}.
\end{remark}

\begin{remark}[On the case $\equilibriumCoefficientLinear_1 = 0$]
    Whenever $\equilibriumCoefficientLinear_1 = 0$, \confer{} \Cref{rem:D1Q2}, we observe that \eqref{eq:zeroVelocityCondition} imposes $\relaxationParameterSymmetric \leq 1$. 
    This condition is highly (and perhaps unnecessarily) restrictive, arising from the requirement that the relaxation be monotone with respect to $\distributionFunction_1$, without accounting for the fact that $\distributionFunction_{1, \vectorial{\indexSpace}}^{\indexTime} \equiv 0$ due to \eqref{eq:initializationEquilibrium}. 
    As such, this constraint can be safely disregarded. 
    This observation highlights a key point: when $\equilibriumCoefficientLinear_1 \approx 0$ and we initialize at equilibrium, the limitation imposed by \eqref{eq:zeroVelocityCondition} can be overly restrictive (particularly in the BGK setting), since the zero-velocity plays a minimal role. 
    While monotone schemes possess a sufficient amount of numerical diffusion, we see that the numerical diffusion obtained by \eqref{eq:modifiedEquation} does not indeed depend on the choice of $\distributionFunctionLetter_1^{\atEquilibrium}$, thus on $\equilibriumCoefficientLinear_1$.
    This situation is analogous to what occurs with linear multi-step methods for ODEs \cite{hundsdorfer2003monotonicity, hundsdorfer2006monotonicity}: despite the presence of negative coefficients, appropriate initializations can still ensure desirable monotonicity properties.
\end{remark}
\begin{remark}[On the smoothness of the fluxes]
    Notice that we assumed the fluxes $\flux_{\indexDirection}$ to be $\continuousSpace{1}$.
    Still, we could just work with (locally) Lipschitz continuous fluxes and still pursue all the proofs to come, at the price of more involved notations.
\end{remark}

\begin{proof}[Proof of \Cref{prop:monotonicityConditions}]
    The relaxation operator reads, inserting \eqref{eq:equilibriaForm}, and taking all the assumptions into account:
    \begin{alignat*}{1}
        \collisionOperator_1(\distributionFunction_1, \dots, \distributionFunction_{\numberVelocities}) &= ( 1 - \relaxationParameterSymmetric + \relaxationParameterSymmetric\equilibriumCoefficientLinear_1) \distributionFunction_{1} + \relaxationParameterSymmetric \equilibriumCoefficientLinear_1 \sum_{\indexVelocity=2}^{\numberVelocities}\distributionFunction_{\indexVelocity}, \\
        \collisionOperator_{2\indexLink}(\distributionFunction_1, \dots, \distributionFunction_{\numberVelocities}) &=  \bigl ( 1 - \tfrac{1}{2}(\relaxationParameterSymmetric + \relaxationParameterAntiSymmetric) + \relaxationParameterSymmetric \equilibriumCoefficientLinear_{2\indexLink} \bigr )\distributionFunction_{2\indexLink} + \bigl ( - \tfrac{1}{2}(\relaxationParameterSymmetric - \relaxationParameterAntiSymmetric) + \relaxationParameterSymmetric \equilibriumCoefficientLinear_{2\indexLink}  \bigr ) \distributionFunction_{2\indexLink+1}
        \\
        &\qquad\qquad\qquad+\relaxationParameterSymmetric \equilibriumCoefficientLinear_{2\indexLink} \sum_{\indexVelocity \neq 2\indexLink, 2\indexLink+1}\distributionFunction_{\indexVelocity} 
        + \relaxationParameterAntiSymmetric \sum_{\indexDirection = 1}^{\spatialDimensionality}\equilibriumCoefficientFlux_{2\indexLink, \indexDirection}\flux_{\indexDirection} \bigl ( \sum_{\indexVelocity = 1}^{\numberVelocities}\distributionFunction_{\indexVelocity} \bigr ), \\
        \collisionOperator_{2\indexLink+1}(\distributionFunction_1, \dots, \distributionFunction_{\numberVelocities}) &=  \bigl ( 1 - \tfrac{1}{2}(\relaxationParameterSymmetric + \relaxationParameterAntiSymmetric) + \relaxationParameterSymmetric \equilibriumCoefficientLinear_{2\indexLink} \bigr )\distributionFunction_{2\indexLink + 1} + \bigl ( - \tfrac{1}{2}(\relaxationParameterSymmetric - \relaxationParameterAntiSymmetric) + \relaxationParameterSymmetric \equilibriumCoefficientLinear_{2\indexLink}  \bigr ) \distributionFunction_{2\indexLink}
        \\
        &\qquad\qquad\qquad+\relaxationParameterSymmetric \equilibriumCoefficientLinear_{2\indexLink} \sum_{\indexVelocity \neq 2\indexLink, 2\indexLink+1}\distributionFunction_{\indexVelocity} 
        - \relaxationParameterAntiSymmetric \sum_{\indexDirection = 1}^{\spatialDimensionality}\equilibriumCoefficientFlux_{2\indexLink, \indexDirection}\flux_{\indexDirection} \bigl ( \sum_{\indexVelocity = 1}^{\numberVelocities}\distributionFunction_{\indexVelocity} \bigr ),
    \end{alignat*}
    with $\indexLink\in\integerInterval{1}{\numberLinks}$.
    For the zero-velocity, simple computations convey 
    \begin{equation*}
        \partial_{\distributionFunction_{\indexVelocity}}\collisionOperator_1(\distributionFunction_1, \dots, \distributionFunction_{\numberVelocities}) = 
        \begin{cases}
            1 - \relaxationParameterSymmetric + \relaxationParameterSymmetric\equilibriumCoefficientLinear_1, \qquad &\text{if}\quad \indexVelocity = 1, \\
            \relaxationParameterSymmetric\equilibriumCoefficientLinear_1, \qquad &\text{otherwise}.
        \end{cases}
    \end{equation*}
    For any link $\indexLink\in\integerInterval{1}{\numberLinks}$, using $\conservedVariableDiscrete=\sum_{\indexVelocity=1}^{\indexVelocity=\numberVelocities} \distributionFunction_{\indexVelocity}$, we have 
    \begin{align*}
        \partial_{\distributionFunction_{\indexVelocity}}\collisionOperator_{2\indexLink} &=
        \begin{cases}
            1 - \tfrac{1}{2}(\relaxationParameterSymmetric + \relaxationParameterAntiSymmetric) + &\relaxationParameterSymmetric \equilibriumCoefficientLinear_{2\indexLink} + \relaxationParameterAntiSymmetric \sum_{\indexDirection = 1}^{\indexDirection=\spatialDimensionality}\equilibriumCoefficientFlux_{2\indexLink, \indexDirection}\flux'_{\indexDirection}(\conservedVariableDiscrete), \qquad \text{if}\quad \indexVelocity = 2\indexLink, \\
            - \tfrac{1}{2}(\relaxationParameterSymmetric - \relaxationParameterAntiSymmetric) + &\relaxationParameterSymmetric \equilibriumCoefficientLinear_{2\indexLink}  + \relaxationParameterAntiSymmetric \sum_{\indexDirection = 1}^{\indexDirection=\spatialDimensionality}\equilibriumCoefficientFlux_{2\indexLink, \indexDirection}\flux'_{\indexDirection}(\conservedVariableDiscrete), \qquad \text{if}\quad \indexVelocity = 2\indexLink + 1,\\
            &\relaxationParameterSymmetric \equilibriumCoefficientLinear_{2\indexLink} + \relaxationParameterAntiSymmetric \sum_{\indexDirection = 1}^{\indexDirection=\spatialDimensionality}\equilibriumCoefficientFlux_{2\indexLink, \indexDirection}\flux'_{\indexDirection}(\conservedVariableDiscrete), \qquad \text{otherwise}.
        \end{cases}\\
        \partial_{\distributionFunction_{\indexVelocity}}\collisionOperator_{2\indexLink + 1}&=
        \begin{cases}
            1 - \tfrac{1}{2}(\relaxationParameterSymmetric + \relaxationParameterAntiSymmetric) + &\relaxationParameterSymmetric \equilibriumCoefficientLinear_{2\indexLink} - \relaxationParameterAntiSymmetric \sum_{\indexDirection = 1}^{\indexDirection=\spatialDimensionality}\equilibriumCoefficientFlux_{2\indexLink, \indexDirection}\flux'_{\indexDirection}(\conservedVariableDiscrete), \qquad \text{if}\quad \indexVelocity = 2\indexLink + 1, \\
            - \tfrac{1}{2}(\relaxationParameterSymmetric - \relaxationParameterAntiSymmetric) + &\relaxationParameterSymmetric \equilibriumCoefficientLinear_{2\indexLink}  - \relaxationParameterAntiSymmetric \sum_{\indexDirection = 1}^{\indexDirection=\spatialDimensionality}\equilibriumCoefficientFlux_{2\indexLink, \indexDirection}\flux'_{\indexDirection}(\conservedVariableDiscrete), \qquad \text{if}\quad \indexVelocity = 2\indexLink,\\
            &\relaxationParameterSymmetric \equilibriumCoefficientLinear_{2\indexLink} - \relaxationParameterAntiSymmetric \sum_{\indexDirection = 1}^{\indexDirection=\spatialDimensionality}\equilibriumCoefficientFlux_{2\indexLink, \indexDirection}\flux'_{\indexDirection}(\conservedVariableDiscrete), \qquad \text{otherwise}.
        \end{cases}
    \end{align*}
    We group the conditions on the non-negativity of the Jacobian cleverly, following the proof of \cite[Proposition 3]{dubois2020notion}.
    Start by the first row of the Jacobian: zero velocity.
    \begin{equation}\label{eq:constraintZeroVel}
        \begin{cases}
            \partial_{\distributionFunction_{1}}\collisionOperator_1(\distributionFunction_1, \dots, \distributionFunction_{\numberVelocities}) &\geq 0, \\
            \partial_{\distributionFunction_{\indexVelocity}}\collisionOperator_1(\distributionFunction_1, \dots, \distributionFunction_{\numberVelocities})&\geq0, \quad \indexVelocity\in\integerInterval{2}{\numberVelocities}
        \end{cases}\qquad \Longleftrightarrow \qquad 
        \relaxationParameterSymmetric\equilibriumCoefficientLinear_1 \geq \max(0, \relaxationParameterSymmetric-1).
    \end{equation}
    Let $\indexLink\in\integerInterval{1}{\numberLinks}$ be any link.
    Consider the sensitivity of members of the $\indexLink$-th link with respect to themselves:
        \begin{equation}\label{eq:constraintWithinLinkOne}
            \begin{cases}
                \partial_{\distributionFunction_{2\indexLink}}\collisionOperator_{2\indexLink} &\geq 0, \\
                \partial_{\distributionFunction_{2\indexLink +1}}\collisionOperator_{2\indexLink + 1} &\geq 0,
            \end{cases}\qquad \Longleftrightarrow\qquad \relaxationParameterAntiSymmetric \Bigl | \sum_{\indexDirection = 1}^{\spatialDimensionality}\equilibriumCoefficientFlux_{2\indexLink, \indexDirection}\flux_{\indexDirection}' (\conservedVariableDiscrete) \Bigr | \leq   \relaxationParameterSymmetric\equilibriumCoefficientLinear_{2\indexLink} + \tfrac{1}{2} (2-\relaxationParameterSymmetric-\relaxationParameterAntiSymmetric).
        \end{equation}
    Go to members of the $\indexLink$-th link with respect to their sibling:
        \begin{equation}\label{eq:constraintWithinLinkTwo}
            \begin{cases}
                \partial_{\distributionFunction_{2\indexLink + 1}}\collisionOperator_{2\indexLink} &\geq 0, \\
                \partial_{\distributionFunction_{2\indexLink}}\collisionOperator_{2\indexLink + 1} &\geq 0,
            \end{cases}\qquad \Longleftrightarrow\qquad \relaxationParameterAntiSymmetric \Bigl | \sum_{\indexDirection = 1}^{\spatialDimensionality}\equilibriumCoefficientFlux_{2\indexLink, \indexDirection}\flux_{\indexDirection}' (\conservedVariableDiscrete) \Bigr | \leq   \relaxationParameterSymmetric\equilibriumCoefficientLinear_{2\indexLink} - \tfrac{1}{2}(\relaxationParameterSymmetric-\relaxationParameterAntiSymmetric).
        \end{equation}
    End with members of the $\indexLink$-th link with respect to any distribution function outside the link:
        \begin{equation}\label{eq:constraintOutsideLink}
            \begin{cases}
                \partial_{\distributionFunction_{\indexVelocity}}\collisionOperator_{2\indexLink}&\geq0, \quad \indexVelocity\in\integerInterval{1}{\numberVelocities}\smallsetminus \{2\indexLink, 2\indexLink + 1\}\\
                \partial_{\distributionFunction_{\indexVelocity}}\collisionOperator_{2\indexLink + 1}&\geq0, \quad \indexVelocity\in\integerInterval{1}{\numberVelocities}\smallsetminus \{2\indexLink, 2\indexLink + 1\}
            \end{cases}\quad \Longleftrightarrow \quad 
            \relaxationParameterAntiSymmetric \Bigl | \sum_{\indexDirection = 1}^{\spatialDimensionality}\equilibriumCoefficientFlux_{2\indexLink, \indexDirection}\flux_{\indexDirection}' (\conservedVariableDiscrete) \Bigr | \leq   \relaxationParameterSymmetric\equilibriumCoefficientLinear_{2\indexLink}.
        \end{equation}
    The left-hand side of the last three inequalities is the same: we gather them using the minimum of the right-hand sides.
    Eventually, since we consider $(\distributionFunction_1, \dots, \distributionFunction_{\numberVelocities})\in\invariantCompactSetDistributions$, we take the most restrictive condition on $[-\maximumInitialDatum, \maximumInitialDatum]$ to which $\conservedVariableDiscrete$ belongs.
\end{proof}

Considering all the parameters be given except for $\relaxationParameterSymmetric$ and $\relaxationParameterAntiSymmetric$, we can draw the area in the two-dimensional plane where the conditions by \Cref{prop:monotonicityConditions} hold.
\begin{definition}
    Let $\equilibriumCoefficientLinear_1, \dots, \equilibriumCoefficientLinear_{\numberVelocities}$ and $\equilibriumCoefficientFlux_{1, \indexDirection}, \dots, \equilibriumCoefficientFlux_{\numberVelocities, \indexDirection}$ for $\indexDirection\in\integerInterval{1}{\spatialDimensionality}$ and $\conservedVariable^{\initial}$ be given.
    We define 
    \begin{alignat*}{4}
        \monotoneZoneTwoD&\definitionEquality\{(\relaxationParameterSymmetric, \relaxationParameterAntiSymmetric)&&\in&&(0, 2]^2 \quad &&\text{such that} ~ \text{\eqref{eq:zeroVelocityCondition} and \eqref{eq:blockCondition} hold}\} \subset \reals^2, \\
        \monotoneSegmentBGK&\definitionEquality\{\relaxationParameterSymmetric=\relaxationParameterAntiSymmetric &&\in&&(0, 2] \quad &&\text{such that} ~ \text{\eqref{eq:zeroVelocityCondition} and \eqref{eq:blockCondition} hold}\} \subset \reals,
    \end{alignat*}
    which depend on $\conservedVariable^{\initial}$ (through $\maximumInitialDatum$), $\flux_1, \dots, \flux_{\spatialDimensionality}, \equilibriumCoefficientLinear_1, \dots, \equilibriumCoefficientLinear_{\numberVelocities}$, and $ \equilibriumCoefficientFlux_{1, \indexDirection}, \dots, \equilibriumCoefficientFlux_{\numberVelocities, \indexDirection}$ for $\indexDirection\in\integerInterval{1}{\spatialDimensionality}$.
\end{definition}
The curious reader may wish to jump straight to \cref{fig:D1Q3} to observe how these plots actually look like. 
We now begin proving ``rigidity'' results concerning the structure of $\monotoneZoneTwoD$. 
This area possesses a trivial yet interesting geometrical property:
\begin{lemma}[Convexity]\label{lemma:convexity}
    The set $\monotoneZoneTwoD$ is convex.
\end{lemma}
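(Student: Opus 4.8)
The plan is to exhibit $\monotoneZoneTwoD$ as a finite intersection of convex subsets of $\reals^2$, exploiting the fact that convexity is preserved under intersection. The ambient box $(0,2]^2$ is already convex, so it suffices to show that each of the two defining inequalities \eqref{eq:zeroVelocityCondition} and \eqref{eq:blockCondition} describes a convex region of the $(\relaxationParameterSymmetric, \relaxationParameterAntiSymmetric)$-plane. The guiding principle throughout is that a superlevel set $\{g \geq 0\}$ of a concave function $g$ is convex; I would therefore move every inequality so that the admissible side is the superlevel set of a concave function.

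First I would recast the zero-velocity constraint \eqref{eq:zeroVelocityCondition} as $\{g \geq 0\}$ for
$g(\relaxationParameterSymmetric, \relaxationParameterAntiSymmetric) \definitionEquality \relaxationParameterSymmetric \equilibriumCoefficientLinear_1 - \max(0, \relaxationParameterSymmetric - 1)$. Here $\relaxationParameterSymmetric \mapsto \relaxationParameterSymmetric \equilibriumCoefficientLinear_1$ is affine, while $\relaxationParameterSymmetric \mapsto \max(0, \relaxationParameterSymmetric - 1)$ is convex, being the pointwise maximum of two affine functions; hence $g$, an affine function minus a convex one, is concave, and $\{g \geq 0\}$ is convex.

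Next, for each link $\indexLink \in \integerInterval{1}{\numberLinks}$ I would isolate the constant $C_{\indexLink} \definitionEquality \max_{\conservedVariableDiscrete \in [-\maximumInitialDatum, \maximumInitialDatum]} \bigl| \sum_{\indexDirection=1}^{\spatialDimensionality} \equilibriumCoefficientFlux_{2\indexLink, \indexDirection} \flux_{\indexDirection}'(\conservedVariableDiscrete)\bigr| \geq 0$, which is independent of $(\relaxationParameterSymmetric, \relaxationParameterAntiSymmetric)$, and rewrite \eqref{eq:blockCondition} as $\{h_{\indexLink} \geq 0\}$ with
$h_{\indexLink}(\relaxationParameterSymmetric, \relaxationParameterAntiSymmetric) \definitionEquality \relaxationParameterSymmetric \equilibriumCoefficientLinear_{2\indexLink} + \tfrac{1}{2}\min(2 - \relaxationParameterSymmetric - \relaxationParameterAntiSymmetric,\, 0,\, \relaxationParameterAntiSymmetric - \relaxationParameterSymmetric) - C_{\indexLink} \relaxationParameterAntiSymmetric$. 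The crucial observation is that the minimum of three affine functions is concave, the scalar factor $\tfrac{1}{2}$ preserves concavity, and the two remaining terms $\relaxationParameterSymmetric \equilibriumCoefficientLinear_{2\indexLink}$ and $-C_{\indexLink} \relaxationParameterAntiSymmetric$ are affine; therefore $h_{\indexLink}$ is concave as a sum of concave functions, so $\{h_{\indexLink} \geq 0\}$ is convex.

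Finally I would conclude that $\monotoneZoneTwoD = (0,2]^2 \cap \{g \geq 0\} \cap \bigcap_{\indexLink=1}^{\numberLinks} \{h_{\indexLink} \geq 0\}$ is an intersection of convex sets, hence convex. I do not expect a genuine obstacle here, consistently with the statement being described as ``trivial''; the only point demanding care is the bookkeeping of convexity versus concavity for each term. In particular, one must notice that it is precisely the presence of $\max(0, \cdot)$ and $\min(\cdot, \cdot, \cdot)$ that renders the governing functions concave once all terms are gathered on one side, so that the admissible region is a superlevel set of a concave function rather than a sublevel set.
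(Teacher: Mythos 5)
Your proposal is correct and takes essentially the same route as the paper: the paper notes that the defining constraints, being affine in $(\relaxationParameterSymmetric, \relaxationParameterAntiSymmetric)$ once the $\max$/$\min$ are unfolded, cut out finitely many half-spaces whose intersection is a convex polytope. Your concave-superlevel-set packaging is the same fact in different vocabulary, since the superlevel set $\{h \geq 0\}$ of an affine function plus a minimum of affine functions is exactly the intersection of the half-spaces attached to each affine piece.
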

\begin{proof}
    Since the derivatives of the relaxation operator are affine in $\relaxationParameterSymmetric$ and $\relaxationParameterAntiSymmetric$, the inequality constraints that we obtain by \eqref{eq:constraintZeroVel}, \eqref{eq:constraintWithinLinkOne}, \eqref{eq:constraintWithinLinkTwo}, and \eqref{eq:constraintOutsideLink}---and which make up $\monotoneZoneTwoD$---define a finite number of half-spaces. Their intersection forms a convex polytope.
\end{proof}
This result means that if we find two points in $\monotoneZoneTwoD$, the segment connecting these two points also lays within $\monotoneZoneTwoD$. 
Moreover, $\argmin_{(\relaxationParameterSymmetric, \relaxationParameterAntiSymmetric)\in\monotoneZoneTwoD} \bigl ( \frac{1}{\relaxationParameterAntiSymmetric} - \tfrac{1}{2}\bigr ) \in \partial\monotoneZoneTwoD$.
The next question is whether $(1, 1) \in \monotoneZoneTwoD$, which is significant because a positive answer would imply that the equilibria are monotone functions. This assumption, discussed in \cite{aregba2024convergence}, is useful to employ Krushkov kinetic entropies. 
The following result shows, \emph{inter alia}, that if there exists at least one pair $(\relaxationParameterSymmetric, \relaxationParameterAntiSymmetric) \in \monotoneZoneTwoD$ (\idEst{}, $\monotoneZoneTwoD \neq \varnothing$), then the equilibria are necessarily monotone (see supplementary material for the proof).
\begin{proposition}[Monotonicity of the equilibria and BGK segment]\label{prop:monotonicityEquilibria}
    We have that $\monotoneZoneTwoD\neq\varnothing$ if and only if the equilibria are monotone non-decreasing, \idEst{}
    \begin{equation*}
        \text{for all }\conservedVariableDiscrete\in[-\maximumInitialDatum, \maximumInitialDatum], \qquad 
        \frac{\differential\distributionFunctionLetter_{\indexVelocity}^{\atEquilibrium}(\conservedVariableDiscrete)}{\differential\conservedVariableDiscrete}\geq 0, \qquad \indexVelocity\in\integerInterval{1}{\numberVelocities}.
    \end{equation*}
    Moreover, in this case, as in \cite[Proposition 2.1]{aregba2024convergence} 
    \begin{align}
        \monotoneSegmentBGK =  \Bigl (0, \min\Bigl ( \frac{1}{1-\equilibriumCoefficientLinear_1}, 
        &\frac{1}{1-\equilibriumCoefficientLinear_{2}+\max\limits_{\conservedVariableDiscrete\in[-\maximumInitialDatum, \maximumInitialDatum]} | \sum_{\indexDirection = 1}^{\indexDirection=\spatialDimensionality}\equilibriumCoefficientFlux_{2, \indexDirection}\flux_{\indexDirection}' (\conservedVariableDiscrete)  |}, \nonumber\\
        &\dots, \nonumber\\
        &\frac{1}{1-\equilibriumCoefficientLinear_{2\numberLinks}+\max\limits_{\conservedVariableDiscrete\in[-\maximumInitialDatum, \maximumInitialDatum]} | \sum_{\indexDirection = 1}^{\indexDirection=\spatialDimensionality}\equilibriumCoefficientFlux_{2\numberLinks, \indexDirection}\flux_{\indexDirection}' (\conservedVariableDiscrete)  |}\Bigr )\Bigr ] \supset (0, 1],\label{eq:inequalityOmegaBGK}
    \end{align}
    and $\equilibriumCoefficientLinear_{\indexVelocity}\in[0, 1]$ for $\indexVelocity\in\integerInterval{1}{\numberVelocities}$.
\end{proposition}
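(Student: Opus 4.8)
The plan is to recast the entire statement in terms of the coefficients $\equilibriumCoefficientLinear_{\indexVelocity}, \equilibriumCoefficientFlux_{\indexVelocity, \indexDirection}$ and the scalar abbreviation $A_{\indexLink} \definitionEquality \max_{\conservedVariableDiscrete\in[-\maximumInitialDatum, \maximumInitialDatum]}|\sum_{\indexDirection=1}^{\spatialDimensionality}\equilibriumCoefficientFlux_{2\indexLink, \indexDirection}\flux_{\indexDirection}'(\conservedVariableDiscrete)| \geq 0$. Differentiating \eqref{eq:equilibriaForm} and using the link symmetry $\equilibriumCoefficientLinear_{2\indexLink} = \equilibriumCoefficientLinear_{2\indexLink+1}$, $\equilibriumCoefficientFlux_{2\indexLink, \indexDirection} = -\equilibriumCoefficientFlux_{2\indexLink+1, \indexDirection}$ together with $\equilibriumCoefficientFlux_{1, \indexDirection} = 0$, one checks immediately that the equilibria are monotone non-decreasing on $[-\maximumInitialDatum, \maximumInitialDatum]$ if and only if $\equilibriumCoefficientLinear_1 \geq 0$ and $A_{\indexLink} \leq \equilibriumCoefficientLinear_{2\indexLink}$ for every $\indexLink\in\integerInterval{1}{\numberLinks}$. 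This turns the whole proposition into an algebraic question about \eqref{eq:zeroVelocityCondition}--\eqref{eq:blockCondition}.

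For the ``if'' direction I would simply evaluate the monotonicity conditions at the BGK point $(\relaxationParameterSymmetric, \relaxationParameterAntiSymmetric) = (1,1)$: there the minimum in \eqref{eq:blockCondition} vanishes and $\max(0, \relaxationParameterSymmetric-1) = 0$, so \eqref{eq:zeroVelocityCondition}--\eqref{eq:blockCondition} reduce exactly to $\equilibriumCoefficientLinear_1 \geq 0$ and $A_{\indexLink}\leq\equilibriumCoefficientLinear_{2\indexLink}$. Hence monotone equilibria force $(1,1)\in\monotoneZoneTwoD$, so $\monotoneZoneTwoD\neq\varnothing$.

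The ``only if'' direction is the crux. Fix any $(\relaxationParameterSymmetric, \relaxationParameterAntiSymmetric)\in\monotoneZoneTwoD$; since \eqref{eq:blockCondition} bounds $\relaxationParameterAntiSymmetric A_{\indexLink}$ by the \emph{minimum} of the three right-hand sides, the point satisfies each of \eqref{eq:constraintWithinLinkOne}, \eqref{eq:constraintWithinLinkTwo} and \eqref{eq:constraintOutsideLink} separately. The zero-velocity inequality \eqref{eq:constraintZeroVel} gives $\equilibriumCoefficientLinear_1 \geq 0$, and \eqref{eq:constraintOutsideLink}, that is $\relaxationParameterAntiSymmetric A_{\indexLink}\leq\relaxationParameterSymmetric\equilibriumCoefficientLinear_{2\indexLink}$, forces $\equilibriumCoefficientLinear_{2\indexLink}\geq 0$. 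The decisive extra ingredient is the consistency relation \eqref{eq:constraintsConsistency3}: from $\equilibriumCoefficientLinear_1 + 2\sum_{\indexLink}\equilibriumCoefficientLinear_{2\indexLink} = 1$ and the non-negativities just obtained I get $\equilibriumCoefficientLinear_{\indexVelocity}\in[0,1]$ and, crucially, $\equilibriumCoefficientLinear_{2\indexLink}\leq\tfrac12$. Writing $x = \equilibriumCoefficientLinear_{2\indexLink}\in[0,\tfrac12]$ and $a = A_{\indexLink}\geq 0$, I then argue by contradiction: assuming $a>x$, \eqref{eq:constraintOutsideLink} yields $x>0$ and $\relaxationParameterAntiSymmetric<\relaxationParameterSymmetric$, while the sibling constraint \eqref{eq:constraintWithinLinkTwo}, rewritten as $\relaxationParameterAntiSymmetric(a-\tfrac12)\leq\relaxationParameterSymmetric(x-\tfrac12)$, becomes impossible: if $a\geq\tfrac12$ its left side is $\geq0$ whereas its right side is $\leq0$ (here $x\leq\tfrac12$ is used), forcing $a=x=\tfrac12$; and if $a<\tfrac12$ then $x<a<\tfrac12$ makes both sides negative with $|x-\tfrac12|>|a-\tfrac12|$, yielding $\relaxationParameterAntiSymmetric>\relaxationParameterSymmetric$, which contradicts the previous inequality. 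Thus $a\leq x$ for every link, the equilibria are monotone, and the equivalence as well as $\equilibriumCoefficientLinear_{\indexVelocity}\in[0,1]$ follow. I expect this last case analysis to be the main obstacle, precisely because dropping $\equilibriumCoefficientLinear_{2\indexLink}\leq\tfrac12$ opens a genuine loophole: for $x>\tfrac12$ one \emph{can} satisfy \eqref{eq:blockCondition} with $a>x$ by taking $\relaxationParameterAntiSymmetric$ small, so invoking \eqref{eq:constraintsConsistency3} is not optional.

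Finally, for the characterization \eqref{eq:inequalityOmegaBGK} of $\monotoneSegmentBGK$ I would set $\relaxationParameterSymmetric=\relaxationParameterAntiSymmetric=\relaxationParameter$, so the minimum in \eqref{eq:blockCondition} collapses to $\tfrac12\min(2-2\relaxationParameter, 0)$. For $\relaxationParameter\leq1$ both conditions reduce to the (already established) monotonicity of the equilibria and hence hold; for $\relaxationParameter>1$, \eqref{eq:zeroVelocityCondition} becomes $\relaxationParameter\leq 1/(1-\equilibriumCoefficientLinear_1)$ and \eqref{eq:blockCondition} becomes $\relaxationParameter\leq 1/(1-\equilibriumCoefficientLinear_{2\indexLink}+A_{\indexLink})$. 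Intersecting these bounds together with $(0,2]$ gives exactly \eqref{eq:inequalityOmegaBGK}, and since $\equilibriumCoefficientLinear_1\leq1$ and $A_{\indexLink}\leq\equilibriumCoefficientLinear_{2\indexLink}$ make every bound $\geq1$, the resulting segment contains $(0,1]$.
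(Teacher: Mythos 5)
Your proof is correct, and it follows the route the paper's own machinery suggests: reduce monotonicity of the equilibria to $\equilibriumCoefficientLinear_1\geq 0$ and $A_{\indexLink}\leq \equilibriumCoefficientLinear_{2\indexLink}$ (your abbreviation), check the BGK point $(1,1)$ for sufficiency, and for necessity exploit that \eqref{eq:blockCondition} encodes the three separate constraints \eqref{eq:constraintWithinLinkOne}, \eqref{eq:constraintWithinLinkTwo}, \eqref{eq:constraintOutsideLink} together with the consistency normalization $\equilibriumCoefficientLinear_1 + 2\sum_{\indexLink=1}^{\numberLinks}\equilibriumCoefficientLinear_{2\indexLink}=1$, which yields $\equilibriumCoefficientLinear_{2\indexLink}\leq\tfrac{1}{2}$; your two-case contradiction argument is sound, and your remark that dropping $\equilibriumCoefficientLinear_{2\indexLink}\leq\tfrac{1}{2}$ opens a genuine loophole (small $\relaxationParameterAntiSymmetric$ gives counterexamples) correctly identifies the crux. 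One small point to make \eqref{eq:inequalityOmegaBGK} an exact equality rather than an intersection with $(0,2]$: each link bound satisfies $1/(1-\equilibriumCoefficientLinear_{2\indexLink}+A_{\indexLink})\leq 2$, which is immediate from $A_{\indexLink}\geq 0$ and $\equilibriumCoefficientLinear_{2\indexLink}\leq\tfrac{1}{2}$ that you already established, so the cap at $2$ coming from $(0,2]^2$ is never the active constraint; this half-line deserves to be stated explicitly.
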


The next property (proof provided in the supplementary material) is crucial for the forthcoming proofs, particularly to ensure that the discrete solution converges, geometrically in time, to a $\bigO{\spaceStep}$-neighborhood of the equilibrium, and thus to the equilibrium as $\spaceStep \to 0$.
\begin{proposition}[Monotone schemes cannot have both/either $\relaxationParameterSymmetric=2$ and/or $\relaxationParameterAntiSymmetric = 2$]\label{prop:neverEqualToTwo}
    Let $(\relaxationParameterSymmetric, \relaxationParameterAntiSymmetric)\in\monotoneZoneTwoD$.
    Then, $\relaxationParameterSymmetric\neq 2$ and $\relaxationParameterAntiSymmetric\neq 2$.
\end{proposition}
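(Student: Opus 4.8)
The plan is to show that the two monotonicity inequalities of \Cref{prop:monotonicityConditions}, combined with the consistency constraints \eqref{eq:constraintsConsistency3}, cannot hold when $\relaxationParameterSymmetric = 2$ or $\relaxationParameterAntiSymmetric = 2$. For each link $\indexLink\in\integerInterval{1}{\numberLinks}$ I would introduce the nonnegative quantity $A_{\indexLink}\definitionEquality\max_{\conservedVariableDiscrete\in[-\maximumInitialDatum,\maximumInitialDatum]}\bigl|\sum_{\indexDirection=1}^{\spatialDimensionality}\equilibriumCoefficientFlux_{2\indexLink,\indexDirection}\flux_{\indexDirection}'(\conservedVariableDiscrete)\bigr|\geq 0$, so that \eqref{eq:blockCondition} becomes $\relaxationParameterAntiSymmetric A_{\indexLink}\leq\relaxationParameterSymmetric\equilibriumCoefficientLinear_{2\indexLink}+\tfrac12\min(2-\relaxationParameterSymmetric-\relaxationParameterAntiSymmetric,0,\relaxationParameterAntiSymmetric-\relaxationParameterSymmetric)$. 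Writing the minimum as minus a maximum, this is equivalent to $\relaxationParameterSymmetric\equilibriumCoefficientLinear_{2\indexLink}\geq\relaxationParameterAntiSymmetric A_{\indexLink}+\tfrac12\max(\relaxationParameterSymmetric+\relaxationParameterAntiSymmetric-2,0,\relaxationParameterSymmetric-\relaxationParameterAntiSymmetric)$.

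First I would sum this over all links. The maximum term is independent of $\indexLink$, and by \eqref{eq:constraintsConsistency3} one has $\sum_{\indexLink=1}^{\numberLinks}\equilibriumCoefficientLinear_{2\indexLink}=\tfrac12(1-\equilibriumCoefficientLinear_1)$, so the left-hand side collapses to $\tfrac12\relaxationParameterSymmetric(1-\equilibriumCoefficientLinear_1)$. The zero-velocity condition \eqref{eq:zeroVelocityCondition} gives $\relaxationParameterSymmetric\equilibriumCoefficientLinear_1\geq\max(0,\relaxationParameterSymmetric-1)$, whence $\relaxationParameterSymmetric(1-\equilibriumCoefficientLinear_1)\leq\relaxationParameterSymmetric-\max(0,\relaxationParameterSymmetric-1)=\min(\relaxationParameterSymmetric,1)\leq 1$. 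Combining the two bounds yields the master inequality
\begin{equation*}
    2\relaxationParameterAntiSymmetric\sum_{\indexLink=1}^{\numberLinks}A_{\indexLink}+\numberLinks\max(\relaxationParameterSymmetric+\relaxationParameterAntiSymmetric-2,0,\relaxationParameterSymmetric-\relaxationParameterAntiSymmetric)\leq\min(\relaxationParameterSymmetric,1).
\end{equation*}
Next I would substitute the boundary values. If $\relaxationParameterAntiSymmetric=2$, the maximum equals $\relaxationParameterSymmetric$ and $\min(\relaxationParameterSymmetric,1)\leq\relaxationParameterSymmetric$, so the inequality reduces to $4\sum_{\indexLink}A_{\indexLink}+(\numberLinks-1)\relaxationParameterSymmetric\leq 0$; since $\numberLinks\geq 1$, $\relaxationParameterSymmetric>0$ and $A_{\indexLink}\geq 0$, this forces $\sum_{\indexLink}A_{\indexLink}=0$. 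If $\relaxationParameterSymmetric=2$, then $\min(\relaxationParameterSymmetric,1)=1$ while $\max(\relaxationParameterAntiSymmetric,0,2-\relaxationParameterAntiSymmetric)=\max(\relaxationParameterAntiSymmetric,2-\relaxationParameterAntiSymmetric)\geq 1$, so $2\relaxationParameterAntiSymmetric\sum_{\indexLink}A_{\indexLink}+\numberLinks\geq 2\relaxationParameterAntiSymmetric\sum_{\indexLink}A_{\indexLink}+\numberLinks\max(\relaxationParameterAntiSymmetric,2-\relaxationParameterAntiSymmetric)$ must be $\leq 1$, which again forces $\sum_{\indexLink}A_{\indexLink}=0$. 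In both cases every $A_{\indexLink}$ vanishes.

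Finally I would turn $\sum_{\indexLink}A_{\indexLink}=0$ into a contradiction via consistency. Each $A_{\indexLink}=0$ means $\sum_{\indexDirection=1}^{\spatialDimensionality}\equilibriumCoefficientFlux_{2\indexLink,\indexDirection}\flux_{\indexDirection}'(\conservedVariableDiscrete)=0$ for all $\conservedVariableDiscrete\in[-\maximumInitialDatum,\maximumInitialDatum]$ and all links. Fixing $\conservedVariableDiscrete$, multiplying by $2\discreteVelocityLetter_{2\indexLink,p}$, summing over $\indexLink$, and invoking $2\sum_{\indexLink}\discreteVelocityLetter_{2\indexLink,p}\equilibriumCoefficientFlux_{2\indexLink,\indexDirection}=\delta_{p,\indexDirection}$ from \eqref{eq:constraintsConsistency3} collapses the sum to $\flux_{p}'(\conservedVariableDiscrete)=0$ for every direction $p$ and every $\conservedVariableDiscrete\in[-\maximumInitialDatum,\maximumInitialDatum]$. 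Together with $\flux_{p}(0)=0$ this would make all fluxes vanish identically on the range $[-\maximumInitialDatum,\maximumInitialDatum]$ in which the solution lives, rendering \eqref{eq:conservationLaw} trivial there and contradicting the standing assumption that the fluxes are non-constant; hence $\relaxationParameterSymmetric\neq 2$ and $\relaxationParameterAntiSymmetric\neq 2$. The delicate point of the argument is precisely that no single inequality forbids the value $2$: the exclusion only surfaces once all links are summed and renormalized through consistency, and the last step—ruling out the degenerate flat-flux configuration $\sum_{\indexLink}A_{\indexLink}=0$—is exactly where the consistency relation \eqref{eq:constraintsConsistency3} is indispensable.
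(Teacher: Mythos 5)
Your proof is correct, and the route is essentially the only one available (and, up to reorganization, the one the paper's supplementary proof follows): play the block conditions \eqref{eq:blockCondition} off against the zero-velocity condition \eqref{eq:zeroVelocityCondition} and the consistency relations \eqref{eq:constraintsConsistency3} to force $\sum_{\indexLink}A_{\indexLink}=0$ at either boundary value, then use \eqref{eq:constraintsConsistency3} once more to convert $A_{\indexLink}\equiv 0$ into $\flux_{p}'\equiv 0$ on $[-\maximumInitialDatum,\maximumInitialDatum]$, contradicting non-degeneracy of the fluxes; whether one sums over links first (as you do) or argues link by link is immaterial. Two remarks. First, in the case $\relaxationParameterSymmetric=2$ your displayed comparison $2\relaxationParameterAntiSymmetric\sum_{\indexLink}A_{\indexLink}+\numberLinks\geq 2\relaxationParameterAntiSymmetric\sum_{\indexLink}A_{\indexLink}+\numberLinks\max(\relaxationParameterAntiSymmetric,2-\relaxationParameterAntiSymmetric)$ points the wrong way, since $\max(\relaxationParameterAntiSymmetric,2-\relaxationParameterAntiSymmetric)\geq 1$ makes the right-hand side the larger one; the intended and correct chain is $2\relaxationParameterAntiSymmetric\sum_{\indexLink}A_{\indexLink}+\numberLinks\leq 2\relaxationParameterAntiSymmetric\sum_{\indexLink}A_{\indexLink}+\numberLinks\max(\relaxationParameterAntiSymmetric,2-\relaxationParameterAntiSymmetric)\leq 1$, which still yields $2\relaxationParameterAntiSymmetric\sum_{\indexLink}A_{\indexLink}\leq 1-\numberLinks\leq 0$, so this is a typo rather than a gap. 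Second, your closing contradiction requires the fluxes to be non-constant on the invariant interval $[-\maximumInitialDatum,\maximumInitialDatum]$, not merely on $\reals$: a flux vanishing on that interval but not outside it would genuinely place points with $\relaxationParameterAntiSymmetric=2$ inside $\monotoneZoneTwoD$ (take $\numberLinks=1$, $\equilibriumCoefficientLinear_1=0$, $\equilibriumCoefficientLinear_2=\tfrac12$, $\relaxationParameterSymmetric\leq 1$). You acknowledge this by noting the conservation law becomes trivial there, and this stronger reading of ``non-constant'' is exactly what the proposition itself needs, so the caveat attaches to the statement rather than to your argument.
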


We finally prove that if there is at least one point of $\monotoneZoneTwoD$ strictly above the BGK diagonal, then $\monotoneZoneTwoD$ is somehow ``symmetric'' around the BGK segment: the open BGK segment belongs to the interior of $\monotoneZoneTwoD$, so that there is also something strictly below the BGK diagonal. 
The proof is in the supplementary material.
\begin{proposition}\label{prop:symmetryBGK}
    Assume that $\monotoneZoneTwoD\cap \{(\relaxationParameterSymmetric, \relaxationParameterAntiSymmetric)~ \text{s.t.} ~ \relaxationParameterAntiSymmetric>\relaxationParameterSymmetric\}\neq \varnothing$.
    Then, we have that $\{(\relaxationParameter, \relaxationParameter)~ \text{s.t.} ~\relaxationParameter\in\mathring{\monotoneZoneTwoD}_{\textnormal{BGK}}\}\subset \mathring{\monotoneZoneTwoD}$, which entails that $\monotoneZoneTwoD\cap \{(\relaxationParameterSymmetric, \relaxationParameterAntiSymmetric)~ \text{s.t.} ~ \relaxationParameterAntiSymmetric<\relaxationParameterSymmetric\}\neq \varnothing$.
\end{proposition}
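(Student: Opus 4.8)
The plan is to show that, under the hypothesis, every affine inequality cutting out $\monotoneZoneTwoD$ holds \emph{strictly} at a diagonal point $(\relaxationParameter,\relaxationParameter)$ whose parameter lies in the open BGK segment, which by the polytope structure of \Cref{lemma:convexity} is exactly what places such a point in $\mathring{\monotoneZoneTwoD}$. First I would invoke \Cref{prop:neverEqualToTwo} to note $\monotoneZoneTwoD\subseteq(0,2)^2$, so the box boundary is irrelevant: a feasible point lies in $\mathring{\monotoneZoneTwoD}$ if and only if each of the half-plane constraints \eqref{eq:constraintZeroVel}, \eqref{eq:constraintWithinLinkOne}, \eqref{eq:constraintWithinLinkTwo}, \eqref{eq:constraintOutsideLink} that has non-zero gradient is satisfied strictly (and those with vanishing gradient are vacuous). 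Writing $\monotoneSegmentBGK=(0,\relaxationParameter_{\textnormal{b}}]$ as in \eqref{eq:inequalityOmegaBGK}, with $\relaxationParameter_{\textnormal{b}}\ge 1$ by \Cref{prop:monotonicityEquilibria}, the whole proof reduces to checking this strictness along the diagonal.

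Next I would substitute $\relaxationParameterSymmetric=\relaxationParameterAntiSymmetric=\relaxationParameter$. The zero-velocity constraint \eqref{eq:constraintZeroVel} and the ``self'' constraint \eqref{eq:constraintWithinLinkOne} collapse to $\relaxationParameter\le\relaxationParameter_{\textnormal{b}}$, since the bounds $\tfrac{1}{1-\equilibriumCoefficientLinear_1}$ and $\tfrac{1}{1-\equilibriumCoefficientLinear_{2\indexLink}+M_{\indexLink}}$ have minimum $\relaxationParameter_{\textnormal{b}}$, where $M_{\indexLink}\definitionEquality\max_{\conservedVariableDiscrete\in[-\maximumInitialDatum,\maximumInitialDatum]}|\sum_{\indexDirection=1}^{\spatialDimensionality}\equilibriumCoefficientFlux_{2\indexLink,\indexDirection}\flux_{\indexDirection}'(\conservedVariableDiscrete)|$; hence these two are strict precisely because $\relaxationParameter<\relaxationParameter_{\textnormal{b}}$. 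By contrast, the ``sibling'' and ``outside'' constraints \eqref{eq:constraintWithinLinkTwo} and \eqref{eq:constraintOutsideLink} both collapse, after dividing by $\relaxationParameter>0$, to the single $\relaxationParameter$-independent inequality $M_{\indexLink}\le\equilibriumCoefficientLinear_{2\indexLink}$. These last two are the obstruction: they are tight along the entire diagonal as soon as $M_{\indexLink}=\equilibriumCoefficientLinear_{2\indexLink}>0$, which would make every diagonal point a boundary point and stall the argument.

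The decisive step---and the only place the hypothesis is used---is to upgrade $M_{\indexLink}\le\equilibriumCoefficientLinear_{2\indexLink}$ to a \emph{strict} inequality. I would take a witness $(\relaxationParameterSymmetric^{\star},\relaxationParameterAntiSymmetric^{\star})\in\monotoneZoneTwoD$ with $\relaxationParameterAntiSymmetric^{\star}>\relaxationParameterSymmetric^{\star}>0$ and evaluate the outside-link constraint \eqref{eq:constraintOutsideLink} there: $\relaxationParameterAntiSymmetric^{\star}M_{\indexLink}\le\relaxationParameterSymmetric^{\star}\equilibriumCoefficientLinear_{2\indexLink}$, so $M_{\indexLink}\le(\relaxationParameterSymmetric^{\star}/\relaxationParameterAntiSymmetric^{\star})\equilibriumCoefficientLinear_{2\indexLink}$ with ratio strictly below one. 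Thus for each link either $\equilibriumCoefficientLinear_{2\indexLink}>0$, which forces $M_{\indexLink}<\equilibriumCoefficientLinear_{2\indexLink}$, or $\equilibriumCoefficientLinear_{2\indexLink}=0$, which forces $M_{\indexLink}=0$ so that \eqref{eq:constraintWithinLinkTwo} and \eqref{eq:constraintOutsideLink} degenerate to the vacuous $0\le0$ and carry no gradient. Either way these constraints no longer obstruct interiority, and together with the strictness of \eqref{eq:constraintZeroVel} and \eqref{eq:constraintWithinLinkOne} for $\relaxationParameter<\relaxationParameter_{\textnormal{b}}$ this yields $(\relaxationParameter,\relaxationParameter)\in\mathring{\monotoneZoneTwoD}$ for every $\relaxationParameter\in(0,\relaxationParameter_{\textnormal{b}})=\mathring{\monotoneZoneTwoD}_{\textnormal{BGK}}$.

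Finally, the last assertion follows for free: since $\mathring{\monotoneZoneTwoD}_{\textnormal{BGK}}$ is non-empty ($\relaxationParameter_{\textnormal{b}}\ge1$) and any such $(\relaxationParameter,\relaxationParameter)$ is an interior point, a full two-dimensional ball around it lies in $\monotoneZoneTwoD$ and hence contains points $(\relaxationParameter+\delta,\relaxationParameter-\delta)$ with $\relaxationParameterAntiSymmetric<\relaxationParameterSymmetric$, giving $\monotoneZoneTwoD\cap\{\relaxationParameterAntiSymmetric<\relaxationParameterSymmetric\}\neq\varnothing$. I expect the main difficulty to be exactly the tightness of \eqref{eq:constraintWithinLinkTwo}--\eqref{eq:constraintOutsideLink} on the diagonal: the entire content of the hypothesis is spent converting that equality into the strict inequality $M_{\indexLink}<\equilibriumCoefficientLinear_{2\indexLink}$, after which interiority is a routine check.
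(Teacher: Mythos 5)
Your reduction of the problem is sound, and most of the argument checks out: since $\monotoneZoneTwoD$ is cut out, inside the box, by finitely many affine inequalities, and \Cref{prop:neverEqualToTwo} keeps every feasible point inside $(0,2)^2$, a feasible point is interior exactly when each constraint with non-vanishing gradient holds strictly there. Your verification that \eqref{eq:constraintZeroVel} and \eqref{eq:constraintWithinLinkOne} are strict on the diagonal for $\relaxationParameter$ strictly below the right endpoint of $\monotoneSegmentBGK$, your use of the witness $(\relaxationParameterSymmetric^{\star},\relaxationParameterAntiSymmetric^{\star})$ in \eqref{eq:constraintOutsideLink} to upgrade $M_{\indexLink}\leq\equilibriumCoefficientLinear_{2\indexLink}$ to the strict inequality $M_{\indexLink}<\equilibriumCoefficientLinear_{2\indexLink}$ whenever $\equilibriumCoefficientLinear_{2\indexLink}>0$ (here $M_{\indexLink}$ is, as in your notation, the maximum of $|\sum_{\indexDirection=1}^{\spatialDimensionality}\equilibriumCoefficientFlux_{2\indexLink,\indexDirection}\flux_{\indexDirection}'|$ over $[-\maximumInitialDatum,\maximumInitialDatum]$), and the concluding ball argument are all correct.

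The genuine gap is your treatment of links with $\equilibriumCoefficientLinear_{2\indexLink}=0$. You claim that $M_{\indexLink}=\equilibriumCoefficientLinear_{2\indexLink}=0$ makes both \eqref{eq:constraintWithinLinkTwo} and \eqref{eq:constraintOutsideLink} ``degenerate to the vacuous $0\le 0$''. That is true for \eqref{eq:constraintOutsideLink}, but false for \eqref{eq:constraintWithinLinkTwo}: its term $-\tfrac{1}{2}(\relaxationParameterSymmetric-\relaxationParameterAntiSymmetric)$ involves neither $\equilibriumCoefficientLinear_{2\indexLink}$ nor $M_{\indexLink}$, so the constraint becomes
\begin{equation*}
    0 \;\leq\; -\tfrac{1}{2}(\relaxationParameterSymmetric - \relaxationParameterAntiSymmetric), \qquad \text{i.e.,} \qquad \relaxationParameterSymmetric \leq \relaxationParameterAntiSymmetric,
\end{equation*}
a half-plane with non-zero gradient whose boundary is exactly the BGK diagonal. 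For such a link no diagonal point can be interior, and the argument collapses. Moreover this is not a repairable oversight within your case analysis: take $\spatialDimensionality=1$, $\numberLinks=2$, an active first link with $0<M_1<\equilibriumCoefficientLinear_{2}$, and a ``dead'' second link with $\equilibriumCoefficientLinear_{4}=\equilibriumCoefficientFlux_{4,1}=0$, which the standing assumptions permit. Then points with $\relaxationParameterSymmetric$ small and $\relaxationParameterAntiSymmetric/\relaxationParameterSymmetric$ slightly above $1$ are feasible, so the hypothesis of the proposition holds, yet \eqref{eq:constraintWithinLinkTwo} for the dead link forces $\monotoneZoneTwoD\subset\{\relaxationParameterSymmetric\leq\relaxationParameterAntiSymmetric\}$, so that neither conclusion (interiority of the diagonal, nor $\monotoneZoneTwoD\cap\{\relaxationParameterAntiSymmetric<\relaxationParameterSymmetric\}\neq\varnothing$) can hold. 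The case $\equilibriumCoefficientLinear_{2\indexLink}=0$ therefore has to be \emph{excluded} by a non-degeneracy requirement (each link's equilibrium not identically zero on $[-\maximumInitialDatum,\maximumInitialDatum]$; note that $\equilibriumCoefficientLinear_{2\indexLink}=0$ together with $M_{\indexLink}>0$ already gives $\monotoneZoneTwoD=\varnothing$ by \eqref{eq:constraintOutsideLink}, so this amounts to requiring $\equilibriumCoefficientLinear_{2\indexLink}>0$), not absorbed as vacuous: as written, your proof is wrong precisely there, and this is also the only point where the hypothesis genuinely interacts with the geometry, as the paper's remark following the proposition (attributing boundary behavior of the diagonal to $M_{\indexLink}=\equilibriumCoefficientLinear_{2\indexLink}$) confirms.
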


\begin{remark}
    Notice that we could face $\monotoneZoneTwoD\cap \{(\relaxationParameterSymmetric, \relaxationParameterAntiSymmetric)~ \text{s.t.} ~ \relaxationParameterAntiSymmetric<\relaxationParameterSymmetric\}\neq \varnothing$ but $\monotoneZoneTwoD\cap \{(\relaxationParameterSymmetric, \relaxationParameterAntiSymmetric)~ \text{s.t.} ~ \relaxationParameterAntiSymmetric>\relaxationParameterSymmetric\}= \varnothing$, thus $\{(\relaxationParameter, \relaxationParameter)~ \text{s.t.} ~\relaxationParameter\in\mathring{\monotoneZoneTwoD}_{\textnormal{BGK}}\}\subset \partial\monotoneZoneTwoD$.
    This happens when there exists $\indexLink\in\integerInterval{1}{\numberLinks}$ such that $ \max_{\conservedVariableDiscrete\in[-\maximumInitialDatum, \maximumInitialDatum]} | \sum_{\indexDirection = 1}^{\indexDirection=\spatialDimensionality}\equilibriumCoefficientFlux_{2\indexLink, \indexDirection}\flux_{\indexDirection}' (\conservedVariableDiscrete) | =\equilibriumCoefficientLinear_{2\indexLink}$.
\end{remark}

\begin{figure}[h]
    \begin{center}
        \begin{tikzpicture}
        \foreach \j in {0,...,1} {
            \draw[->] (4.5,5.5*\j+2) -- (6.5,5.5*\j+2) node[midway, above] {we deduce};
            \foreach \i in {0,...,1} {

                    \draw[->] (7*\i, 5.5*\j) -- (7*\i+4.25, 5.5*\j) node[right] {$\relaxationParameterSymmetric$}; 
                    \draw[->] (7*\i,5.5*\j) -- (7*\i, 5.5*\j+4.25) node[above] {$\relaxationParameterAntiSymmetric$};
                    \node at (7*\i, 5.5*\j-0.15) {$0$};
                    \node at (7*\i+4, 5.5*\j-0.15) {$2$};
                    \node at (7*\i-0.15, 5.5*\j+4) {$2$};
                    \node at (7*\i+2, 5.5*\j-0.15) {$1$};
                    \node at (7*\i-0.15, 5.5*\j+2) {$1$};

                    \draw[dotted] (7*\i,5.5*\j) -- (7*\i+4, 5.5*\j+4);
                    \draw[dash dot] (7*\i-0.05,5.5*\j+4) -- (7*\i+4, 5.5*\j+4);
                    \draw[dash dot] (7*\i+4, 5.5*\j-0.05) -- (7*\i+4, 5.5*\j+4);

                }
            }
            \node[above] at (2, 10) {Knowing};


            \fill[fill=colorTwo] (1,8.5) circle (2pt);

            \fill[color=colorOne, opacity=0.5] (7, 5.5) -- (9.5, 8) -- (1+7,8.5) -- cycle;
            \fill[color=colorOne, opacity=0.5] (7, 5.5) -- (9.5, 8) -- (9,6.5)  -- cycle;
            \fill[fill=colorTwo] (1+7,8.5) circle (2pt);
            \draw[ultra thick, color=colorThree] (7, 5.5) -- (9.5, 8);
            \fill[fill=colorThree] (9.5, 8) circle (2pt);
            \fill[fill=colorThree] (9, 7.5) circle (2pt);
            \fill[fill=cyan] (9,6.5) circle (2pt);

            \node[above, color=colorOne, opacity=0.5] at (9,8.5) {Lemma \refNoColor{lemma:convexity}};
            \node[right, color=colorThree] at (9.5, 8) {Prop. \refNoColor{prop:monotonicityEquilibria}};
            \node[below, color=cyan] at (9,6.2) {Prop. \refNoColor{prop:symmetryBGK}};

            \fill[fill=colorTwo] (2.5,1.5) circle (2pt);

            \fill[color=colorOne, opacity=0.5] (7, 0) -- (9.5, 2.5) -- (2.5+7,1.5)  -- cycle;
            \fill[fill=colorTwo] (2.5+7,1.5) circle (2pt);
            \draw[ultra thick, color=colorThree] (7, 0) -- (9.5, 2.5);
            \fill[fill=colorThree] (9.5, 2.5) circle (2pt);
            \fill[fill=colorThree] (9, 2) circle (2pt);

            \node[below, color=colorOne, opacity=0.5] at (2.5+7,0.75) {Lemma \refNoColor{lemma:convexity}};
            \node[right, color=colorThree] at (9.5, 2.5) {Prop. \refNoColor{prop:monotonicityEquilibria}};
            
        \end{tikzpicture}
    \end{center}\caption{\label{fig:deductions}Left: points known in $\monotoneZoneTwoD$. Right: deductions on points in $\monotoneZoneTwoD$ (colored) that we can obtain from the knowledge presented on the left, using the previous results.}
\end{figure}

\Cref{fig:deductions} visually resumes the \emph{a priori} knowledge that we can obtain on $\monotoneZoneTwoD$ from knowing one of its points above or below the BGK diagonal.
This adheres very well to the actual $\monotoneZoneTwoD$, see \Cref{fig:D1Q3} and \ref{fig:D2Q5}.

\section{Convergence}\label{sec:convergence}

We now prove the convergence of the discrete solution of the lattice Boltzmann scheme under the conditions by \Cref{prop:monotonicityConditions}.
We introduce the following notations.
\begin{align*}
    \distributionFunctionsAsFunctionSpace{\indexTime}(\vectorial{\spaceVariable}) &\definitionEquality \sum_{\vectorial{\indexSpace}\in\relatives^{\spatialDimensionality}} {(\distributionFunction_{1, \vectorial{\indexSpace}}^{\indexTime}, \dots, \distributionFunction_{\numberVelocities, \vectorial{\indexSpace}}^{\indexTime})}\indicatorFunction{\cell{\vectorial{\indexSpace}}} (\vectorial{\spaceVariable}), \qquad \conservedVariableDiscreteAsFunctionSpace{\indexTime} (\vectorial{\spaceVariable}) \definitionEquality (1, \dots, 1)\cdot \distributionFunctionsAsFunctionSpace{\indexTime}(\vectorial{\spaceVariable}), \\
    \distributionFunctionsAsFunction(\timeVariable, \vectorial{\spaceVariable}) &\definitionEquality \sum_{\indexTime\in\naturals}\distributionFunctionsAsFunctionSpace{\indexTime}(\vectorial{\spaceVariable})  \indicatorFunction{[\timeGridPoint{\indexTime}, \timeGridPoint{\indexTime+1})} (\timeVariable), \qquad \conservedVariableDiscreteAsFunction (\timeVariable, \vectorial{\spaceVariable}) \definitionEquality\sum_{\indexTime\in\naturals}\conservedVariableDiscreteAsFunctionSpace{\indexTime}(\vectorial{\spaceVariable})  \indicatorFunction{[\timeGridPoint{\indexTime}, \timeGridPoint{\indexTime+1})} (\timeVariable).
\end{align*}
The total variation $\totalVariation{\conservedVariable}$ of a function $\conservedVariable\in\lebesgueSpace{1}_{\text{loc}}(\reals^{\spatialDimensionality})$ follows the usual definition, so that the total variation of the discrete solution becomes
\begin{equation*}
    \totalVariation{\conservedVariableDiscreteAsFunctionSpace{\indexTime}} = \spaceStep^{\spatialDimensionality-1}\sum_{\vectorial{\indexSpace}\in\relatives^{\spatialDimensionality}}\sum_{\indexDirection=1}^{\spatialDimensionality} |\conservedVariableDiscrete_{\vectorial{\indexSpace}+\canonicalBasisVector{\indexDirection}}^{\indexTime} - \conservedVariableDiscrete_{\vectorial{\indexSpace}}^{\indexTime}|,
\end{equation*}
where $\canonicalBasisVector{\indexDirection}$ is the $\indexDirection$-th vector of the canonical basis of $\reals^{\spatialDimensionality}$.
In this way, the total variation for the distribution functions is 
\begin{equation*}
    \totalVariationVectorial{\distributionFunctionsAsFunctionSpace{\indexTime}} \definitionEquality \spaceStep^{\spatialDimensionality-1}\sum_{\vectorial{\indexSpace}\in\relatives^{\spatialDimensionality}}\sum_{\indexDirection=1}^{\spatialDimensionality} \sum_{\indexVelocity=1}^{\numberVelocities}|\distributionFunction_{\indexVelocity, \vectorial{\indexSpace}+\canonicalBasisVector{\indexDirection}}^{\indexTime} - \distributionFunction_{\indexVelocity, \vectorial{\indexSpace}}^{\indexTime}|.
\end{equation*}

\begin{remark}[Norm on the vector space $\reals^{\numberVelocities}$]\label{rem:normVectorSpace}
    Before proceeding, notice that we endow the vector space $\reals^{\numberVelocities}$ of the distribution functions at each time/space grid-point\footnote{We treat this as a column or row vector, depending on the context, for notational convenience.} with the $\petitLebesgueSpace{1}$-norm.
    While all norms on $\reals^{\numberVelocities}$ are equivalent, this choice is particularly convenient.
    For norms (\exempliGratia{}, the $\lebesgueSpace{1}$-norm), or a semi-norms (\exempliGratia{}, total variation) applied to vectors of $\reals^{\numberVelocities}$, we do not stress that the $\petitLebesgueSpace{1}$-norm is being used.
    Moreover, we may simply write $\lebesgueSpace{1}$ to refer to $\lebesgueSpace{1}(\reals^{\spatialDimensionality}, \reals^{\numberVelocities})$.
\end{remark}

\subsection{Convergence to a weak solution}

The steps of the proof are as in \cite{aregba2000discrete,aregba2024convergence}: maximum principle ($\lebesgueSpace{\infty}$-bounds), $\lebesgueSpace{1}$-contractivity, estimates on the total variation, and convergence to the equilibrium.

\subsubsection{Maximum principle}

The monotonicity of the relaxation entails that---upon initializing at equilibrium \eqref{eq:initializationEquilibrium}---the numerical schemes preserve a compact set.
\begin{proposition}[Invariant compact sets]\label{prop:compactInvariantSets}
    Let the conditions by \Cref{prop:monotonicityConditions} be satisfied.
    Then, for all $\indexTime \in \naturals$
    \begin{equation}\label{eq:compactInvariantSets}
        \forall \vectorial{\indexSpace}\in\relatives^{\spatialDimensionality}, \qquad (\distributionFunction_{1, \vectorial{\indexSpace}}^{\indexTime}, \dots, \distributionFunction_{\numberVelocities, \vectorial{\indexSpace}}^{\indexTime})\in \invariantCompactSetDistributions, \quad \text{and}\quad \conservedVariableDiscrete_{\vectorial{\indexSpace}}^{\indexTime} = \sum_{\indexVelocity = 1}^{\numberVelocities} \distributionFunction_{\indexVelocity, \vectorial{\indexSpace}}^{\indexTime} \in [-\maximumInitialDatum, \maximumInitialDatum].
    \end{equation}
    Therefore $\lVert \conservedVariableDiscreteAsFunction\rVert_{\lebesgueSpace{\infty}} \leq \maximumInitialDatum$ and $\lVert\distributionFunctionsAsFunction \rVert_{\lebesgueSpace{\infty}} \leq \sum_{\indexVelocity=1}^{\indexVelocity=\numberVelocities} \max(|\distributionFunction_{\indexVelocity}^{\atEquilibrium}(-\maximumInitialDatum)|, |\distributionFunction_{\indexVelocity}^{\atEquilibrium}(\maximumInitialDatum)|)$.
\end{proposition}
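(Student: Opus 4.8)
The plan is to prove \eqref{eq:compactInvariantSets} by induction on the time index $\indexTime$, exploiting the two-step (relaxation-then-transport) structure of the scheme together with the monotonicity of the relaxation established in \Cref{prop:monotonicityConditions}. The whole argument hinges on one elementary observation: the equilibria are fixed points of the relaxation. Indeed, if $(\distributionFunction_1, \dots, \distributionFunction_{\numberVelocities}) = (\distributionFunctionLetter_1^{\atEquilibrium}(\conservedVariableDiscrete), \dots, \distributionFunctionLetter_{\numberVelocities}^{\atEquilibrium}(\conservedVariableDiscrete))$ for some $\conservedVariableDiscrete$, then $\symmetricDistributionFunction_{\indexVelocity} = \symmetricDistributionFunctionLetter_{\indexVelocity}^{\atEquilibrium}(\conservedVariableDiscrete)$ and $\antiSymmetricDistributionFunction_{\indexVelocity} = \antiSymmetricDistributionFunctionLetter_{\indexVelocity}^{\atEquilibrium}(\conservedVariableDiscrete)$, so that every correction in the relaxation vanishes and $\collisionOperator_{\indexVelocity}(\distributionFunction_1, \dots, \distributionFunction_{\numberVelocities}) = \distributionFunctionLetter_{\indexVelocity}^{\atEquilibrium}(\conservedVariableDiscrete)$. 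In particular, the two corner vectors $\minimumDistribution \definitionEquality (\distributionFunctionLetter_1^{\atEquilibrium}(-\maximumInitialDatum), \dots, \distributionFunctionLetter_{\numberVelocities}^{\atEquilibrium}(-\maximumInitialDatum))$ and $\maximumDistribution \definitionEquality (\distributionFunctionLetter_1^{\atEquilibrium}(\maximumInitialDatum), \dots, \distributionFunctionLetter_{\numberVelocities}^{\atEquilibrium}(\maximumInitialDatum))$ are fixed by $\vectorial{\collisionOperator}$. Since the hypotheses of \Cref{prop:monotonicityConditions} force $\monotoneZoneTwoD \neq \varnothing$, \Cref{prop:monotonicityEquilibria} guarantees that the equilibria are non-decreasing on $[-\maximumInitialDatum, \maximumInitialDatum]$, whence $\distributionFunctionLetter_{\indexVelocity}^{\atEquilibrium}(-\maximumInitialDatum) \leq \distributionFunctionLetter_{\indexVelocity}^{\atEquilibrium}(\maximumInitialDatum)$, and $\minimumDistribution, \maximumDistribution$ are exactly the componentwise minimum and maximum of $\invariantCompactSetDistributions$.

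The key step is then that the relaxation maps $\invariantCompactSetDistributions$ into itself. Given any $(\distributionFunction_1, \dots, \distributionFunction_{\numberVelocities}) \in \invariantCompactSetDistributions$, one has the componentwise bounds $\minimumDistribution \leq (\distributionFunction_1, \dots, \distributionFunction_{\numberVelocities}) \leq \maximumDistribution$. Applying the monotonicity of $\vectorial{\collisionOperator}$ together with the fixed-point property, for each $\indexVelocity \in \integerInterval{1}{\numberVelocities}$,
\begin{equation*}
    \distributionFunctionLetter_{\indexVelocity}^{\atEquilibrium}(-\maximumInitialDatum) = \collisionOperator_{\indexVelocity}(\minimumDistribution) \leq \collisionOperator_{\indexVelocity}(\distributionFunction_1, \dots, \distributionFunction_{\numberVelocities}) \leq \collisionOperator_{\indexVelocity}(\maximumDistribution) = \distributionFunctionLetter_{\indexVelocity}^{\atEquilibrium}(\maximumInitialDatum),
\end{equation*}
that is, $\vectorial{\collisionOperator}(\distributionFunction_1, \dots, \distributionFunction_{\numberVelocities}) \in \invariantCompactSetDistributions$.

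With this in hand the induction is routine. For the base case $\indexTime = 0$, the initialization \eqref{eq:initializationEquilibrium} sets $\distributionFunction_{\indexVelocity, \vectorial{\indexSpace}}^0 = \distributionFunctionLetter_{\indexVelocity}^{\atEquilibrium}(\dashint_{\cell{\vectorial{\indexSpace}}} \conservedVariable^{\initial})$; the cell-average obeys $|\dashint_{\cell{\vectorial{\indexSpace}}} \conservedVariable^{\initial}| \leq \lVert \conservedVariable^{\initial} \rVert_{\lebesgueSpace{\infty}} = \maximumInitialDatum$, so by monotonicity of the equilibria $(\distributionFunction_{1, \vectorial{\indexSpace}}^0, \dots, \distributionFunction_{\numberVelocities, \vectorial{\indexSpace}}^0) \in \invariantCompactSetDistributions$ for every $\vectorial{\indexSpace}$. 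For the inductive step, assuming membership at time $\indexTime$ at every grid point, the relaxation keeps the post-collision states in $\invariantCompactSetDistributions$ by the key step, and the transport \eqref{eq:transport} merely relocates the $\indexVelocity$-th component across the grid---leaving it in the fixed interval $[\distributionFunctionLetter_{\indexVelocity}^{\atEquilibrium}(-\maximumInitialDatum), \distributionFunctionLetter_{\indexVelocity}^{\atEquilibrium}(\maximumInitialDatum)]$, which does not depend on the grid point. Hence $(\distributionFunction_{1, \vectorial{\indexSpace}}^{\indexTime+1}, \dots, \distributionFunction_{\numberVelocities, \vectorial{\indexSpace}}^{\indexTime+1}) \in \invariantCompactSetDistributions$, closing the induction. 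The bound $\conservedVariableDiscrete_{\vectorial{\indexSpace}}^{\indexTime} \in [-\maximumInitialDatum, \maximumInitialDatum]$ follows by summing the interval bounds and using $\sum_{\indexVelocity} \distributionFunctionLetter_{\indexVelocity}^{\atEquilibrium}(\conservedVariableDiscrete) = \conservedVariableDiscrete$ from \eqref{eq:constraintsConsistency}, as already noted below the definition of $\invariantCompactSetDistributions$; the two $\lebesgueSpace{\infty}$-estimates are then immediate, the one on $\distributionFunctionsAsFunction$ using the $\petitLebesgueSpace{1}$-norm of \Cref{rem:normVectorSpace} and $|\distributionFunction_{\indexVelocity}| \leq \max(|\distributionFunctionLetter_{\indexVelocity}^{\atEquilibrium}(-\maximumInitialDatum)|, |\distributionFunctionLetter_{\indexVelocity}^{\atEquilibrium}(\maximumInitialDatum)|)$.

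The main obstacle is conceptual rather than computational: it lies in recognizing that the corners of $\invariantCompactSetDistributions$ coincide with the equilibria at $\pm\maximumInitialDatum$ and are therefore fixed by the relaxation, so that monotonicity sandwiches $\vectorial{\collisionOperator}$ between these fixed points. Once this is seen, the stability of $\invariantCompactSetDistributions$ under relaxation is immediate, the transport step is trivially non-expansive on $\invariantCompactSetDistributions$, and the remainder is bookkeeping.
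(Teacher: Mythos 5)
Your proof is correct and takes essentially the same route as the paper's: induction in time, the key observation that the equilibrium states at $\pm\maximumInitialDatum$ are fixed points of the relaxation, and monotonicity of $\vectorial{\collisionOperator}$ to sandwich the post-collision state between these two corners of $\invariantCompactSetDistributions$, with transport handled trivially. The only cosmetic difference is that the paper implements the sandwich explicitly via the mean value theorem (integrating the non-negative Jacobian along the segment joining the two states), whereas you invoke componentwise monotonicity on the box directly---both are valid since $\invariantCompactSetDistributions$ is a product of intervals.
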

\begin{remark}[Sufficient \emph{vs.} necessary conditions]
    For the proofs to come, it is crucial---this cannot be emphasized enough---that \eqref{eq:compactInvariantSets} holds.
    However, in contrast to one-step scalar schemes, \confer{} \cite{godlewski1991hyperbolic}, the conditions by \Cref{prop:monotonicityConditions} are sufficient, but are often not necessary.
    Indeed, the relaxation phase could be decreasing with respect to one argument, yet \eqref{eq:compactInvariantSets} could still hold.
    While the scheme may lack monotonicity when viewed over two time-steps, \idEst{} mapping $(\distributionFunction_1^{\indexTime}, \dots, \distributionFunction_{\numberVelocities}^{\indexTime})\mapsto (\distributionFunction_1^{\indexTime +1}, \dots, \distributionFunction_{\numberVelocities}^{\indexTime+1})$, it might exhibit monotonicity-like properties when acting on the initial datum, sending $(\distributionFunction_1^{0}, \dots, \distributionFunction_{\numberVelocities}^{0})\mapsto (\distributionFunction_1^{\indexTime}, \dots, \distributionFunction_{\numberVelocities}^{\indexTime})$, especially when \eqref{eq:initializationEquilibrium} holds.
    Gaps between sufficient and necessary conditions can be explored for linear problems using Green functions \cite[Chapter 11, Section 5]{bellotti2023numerical}.
    However, this approach becomes cumbersome and lacks generality, as it requires computing the corresponding Finite Difference scheme for $\conservedVariableDiscrete$.
\end{remark}
\begin{proof}[Proof of \Cref{prop:compactInvariantSets}]
    The proof proceeds by induction over $\indexTime$.
    The base case for $\indexTime = 0$ is trivially verified by \eqref{eq:initializationEquilibrium}.
    Assume that \eqref{eq:compactInvariantSets} holds for $\indexTime \in \naturals$.
    For the equilibrium is an eigenstate---with eigenvalue equal to one---of the relaxation operator (\idEst{} $\vectorial{\collisionOperator}(\distributionFunctionLetter_1^{\atEquilibrium}(\conservedVariableDiscrete), \dots, \distributionFunctionLetter_{\numberVelocities}^{\atEquilibrium}(\conservedVariableDiscrete)) = \transpose{(\distributionFunctionLetter_1^{\atEquilibrium}(\conservedVariableDiscrete), \dots, \distributionFunctionLetter_{\numberVelocities}^{\atEquilibrium}(\conservedVariableDiscrete))}$), we have 
    \begin{multline*}
        \distributionFunction_{\indexVelocity, \vectorial{\indexSpace}}^{\indexTime + 1} - \minimumDistribution_{\indexVelocity} = \distributionFunction_{\indexVelocity, \vectorial{\indexSpace} - \vectorial{\discreteVelocityLetter}_{\indexVelocity}/\latticeVelocity}^{\indexTime, \collided} - \minimumDistribution_{\indexVelocity} = \collisionOperator_{\indexVelocity}(\distributionFunction_{1, \vectorial{\indexSpace} - \vectorial{\discreteVelocityLetter}_{\indexVelocity}/\latticeVelocity}^{\indexTime}, \dots, \distributionFunction_{\numberVelocities, \vectorial{\indexSpace} - \vectorial{\discreteVelocityLetter}_{\indexVelocity}/\latticeVelocity}^{\indexTime}) - \collisionOperator_{\indexVelocity}(\minimumDistribution_1, \dots, \minimumDistribution_{\numberVelocities}) \\
        =\int_0^1 \sum_{p = 1}^{\numberVelocities} \partial_{\distributionFunction_{p}} \collisionOperator_{\indexVelocity}(\vartheta \distributionFunction_{1, \vectorial{\indexSpace} - \vectorial{\discreteVelocityLetter}_{\indexVelocity}/\latticeVelocity}^{\indexTime} + (1-\vartheta)\minimumDistribution_1, \dots,  \vartheta \distributionFunction_{\numberVelocities, \vectorial{\indexSpace} - \vectorial{\discreteVelocityLetter}_{\indexVelocity}/\latticeVelocity}^{\indexTime} + (1-\vartheta)\minimumDistribution_{\numberVelocities})( \distributionFunction_{p, \vectorial{\indexSpace} - \vectorial{\discreteVelocityLetter}_{\indexVelocity}/\latticeVelocity}^{\indexTime} - \minimumDistribution_{p}) \differential\vartheta,
    \end{multline*}
    where $\minimumDistribution_{\indexVelocity}\definitionEquality\distributionFunctionLetter_{\indexVelocity}^{\atEquilibrium}(-\maximumInitialDatum)$, and the last equality holds thanks to the mean value theorem.
    By induction assumption, both $(\distributionFunction_{1, \vectorial{\indexSpace} - \vectorial{\discreteVelocityLetter}_{\indexVelocity}/\latticeVelocity}^{\indexTime}, \dots, \distributionFunction_{\numberVelocities, \vectorial{\indexSpace} - \vectorial{\discreteVelocityLetter}_{\indexVelocity}/\latticeVelocity}^{\indexTime})\in \invariantCompactSetDistributions$ and $(\minimumDistribution_1, \dots, \minimumDistribution_{\numberVelocities})\in\invariantCompactSetDistributions$.
    We have that  $ \distributionFunction_{p, \vectorial{\indexSpace} - \vectorial{\discreteVelocityLetter}_{\indexVelocity}/\latticeVelocity}^{\indexTime} - \minimumDistribution_{p} \geq 0$, and, by monotonicity of the relaxation operator, $\partial_{\distributionFunction_{p}} \collisionOperator_{\indexVelocity}(\vartheta \distributionFunction_{1, \vectorial{\indexSpace} - \vectorial{\discreteVelocityLetter}_{\indexVelocity}/\latticeVelocity}^{\indexTime} + (1-\vartheta)\minimumDistribution_1, \dots,  \vartheta \distributionFunction_{\numberVelocities, \vectorial{\indexSpace} - \vectorial{\discreteVelocityLetter}_{\indexVelocity}/\latticeVelocity}^{\indexTime} + (1-\vartheta)\minimumDistribution_{\numberVelocities})\geq 0$ for all $\vartheta\in[0, 1]$. We deduce that $\distributionFunction_{\indexVelocity, \vectorial{\indexSpace}}^{\indexTime + 1} \geq \minimumDistribution_{\indexVelocity}$. 
    The upper bound is found analogously.
\end{proof}

\subsubsection{$\lebesgueSpace{1}$-contractivity}

To show that the numerical scheme generates a $\lebesgueSpace{1}$-contraction, we first establish the $\petitLebesgueSpace{1}$-contractivity of the relaxation operator.
This furthermore justifies why this is the ``right'' norm for $\reals^{\numberVelocities}$, \confer{} \Cref{rem:normVectorSpace}.
\begin{proposition}[$\petitLebesgueSpace{1}$-contractivity of the relaxation]\label{prop:contractivityCollision}
    Let the conditions by \Cref{prop:monotonicityConditions} be satisfied, and let $(\discrete{g}_1, \dots, \discrete{g}_{\numberVelocities}) \in \invariantCompactSetDistributions$ and $(\distributionFunction_1, \dots, \distributionFunction_{\numberVelocities}) \in \invariantCompactSetDistributions$, then 
    \begin{equation*}
        \sum_{\indexVelocity=1}^{\numberVelocities} |\collisionOperator_{\indexVelocity}(\discrete{g}_1, \dots, \discrete{g}_{\numberVelocities}) -  \collisionOperator_{\indexVelocity}(\distributionFunction_1, \dots, \distributionFunction_{\numberVelocities}) | \leq \sum_{\indexVelocity=1}^{\numberVelocities}|\discrete{g}_{\indexVelocity}-\distributionFunction_{\indexVelocity}|.
    \end{equation*}
\end{proposition}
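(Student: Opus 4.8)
The plan is to recognize that \Cref{prop:contractivityCollision} is exactly the setting of the Crandall--Tartar lemma \cite{crandall1980monotone}: a map that is simultaneously \emph{order-preserving} and \emph{mass-conserving} is automatically an $\petitLebesgueSpace{1}$-contraction. I would therefore first isolate these two ingredients for $\vectorial{\collisionOperator}$ restricted to $\invariantCompactSetDistributions$, and then assemble them through the standard lattice argument, without ever writing the Jacobian entries out again.

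First, order preservation: I would show that if $(\distributionFunction_1,\dots,\distributionFunction_{\numberVelocities}), (\discrete{g}_1,\dots,\discrete{g}_{\numberVelocities})\in\invariantCompactSetDistributions$ satisfy $\discrete{g}_{\indexVelocity}\geq\distributionFunction_{\indexVelocity}$ for every $\indexVelocity\in\integerInterval{1}{\numberVelocities}$, then $\collisionOperator_{\indexVelocity}(\discrete{g}_1,\dots,\discrete{g}_{\numberVelocities})\geq\collisionOperator_{\indexVelocity}(\distributionFunction_1,\dots,\distributionFunction_{\numberVelocities})$ for every $\indexVelocity$. Since $\invariantCompactSetDistributions=\prod_{\indexVelocity=1}^{\numberVelocities}[\distributionFunctionLetter_{\indexVelocity}^{\atEquilibrium}(-\maximumInitialDatum), \distributionFunctionLetter_{\indexVelocity}^{\atEquilibrium}(\maximumInitialDatum)]$ is a box, hence convex, the whole segment $\vartheta\discrete{g}+(1-\vartheta)\distributionFunction$, $\vartheta\in[0,1]$, stays in $\invariantCompactSetDistributions$, so exactly as in the proof of \Cref{prop:compactInvariantSets} I may write $\collisionOperator_{\indexVelocity}(\discrete{g})-\collisionOperator_{\indexVelocity}(\distributionFunction)=\int_0^1\sum_{p=1}^{\numberVelocities}\partial_{\distributionFunction_{p}}\collisionOperator_{\indexVelocity}(\vartheta\discrete{g}+(1-\vartheta)\distributionFunction)(\discrete{g}_p-\distributionFunction_p)\,\differential\vartheta$. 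By \Cref{prop:monotonicityConditions} the partial derivatives are non-negative on $\invariantCompactSetDistributions$ and $\discrete{g}_p-\distributionFunction_p\geq0$, so the integrand is non-negative and the claim follows. Second, mass conservation $\sum_{\indexVelocity=1}^{\numberVelocities}\collisionOperator_{\indexVelocity}(\distributionFunction)=\sum_{\indexVelocity=1}^{\numberVelocities}\distributionFunction_{\indexVelocity}$: summing the relaxation \eqref{eq:collision} over $\indexVelocity$, the anti-symmetric moments cancel through $\antiSymmetricDistributionFunction_{2\indexLink}=-\antiSymmetricDistributionFunction_{2\indexLink+1}$ (and likewise for the equilibria), while $\sum_{\indexVelocity}\symmetricDistributionFunction_{\indexVelocity}=\sum_{\indexVelocity}\distributionFunction_{\indexVelocity}=\conservedVariableDiscrete$ and $\sum_{\indexVelocity}\distributionFunctionLetter^{\atEquilibrium}_{\indexVelocity}(\conservedVariableDiscrete)=\conservedVariableDiscrete$ by the first constraint in \eqref{eq:constraintsConsistency}; hence the relaxation leaves $\conservedVariableDiscrete=\sum_{\indexVelocity}\distributionFunction_{\indexVelocity}$ unchanged.

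For the assembly I would write $\distributionFunction\vee\discrete{g}$ for the componentwise maximum, which again lies in $\invariantCompactSetDistributions$ because the latter is a box. Order preservation applied to $\distributionFunction\leq \distributionFunction\vee\discrete{g}$ and to $\discrete{g}\leq \distributionFunction\vee\discrete{g}$ gives $\collisionOperator_{\indexVelocity}(\distributionFunction\vee\discrete{g})-\collisionOperator_{\indexVelocity}(\distributionFunction)\geq(\collisionOperator_{\indexVelocity}(\discrete{g})-\collisionOperator_{\indexVelocity}(\distributionFunction))^+$. Summing over $\indexVelocity$ and using mass conservation, the left-hand side telescopes to $\sum_{\indexVelocity}((\distributionFunction\vee\discrete{g})_{\indexVelocity}-\distributionFunction_{\indexVelocity})=\sum_{\indexVelocity}(\discrete{g}_{\indexVelocity}-\distributionFunction_{\indexVelocity})^+$, whence $\sum_{\indexVelocity}(\collisionOperator_{\indexVelocity}(\discrete{g})-\collisionOperator_{\indexVelocity}(\distributionFunction))^+\leq\sum_{\indexVelocity}(\discrete{g}_{\indexVelocity}-\distributionFunction_{\indexVelocity})^+$. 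Exchanging the roles of $\distributionFunction$ and $\discrete{g}$ yields the symmetric inequality, and adding the two (using $|x|=x^++(-x)^+$) produces the stated contraction.

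The computations are all routine; the one point I would emphasize, and the only genuine obstacle, is that every auxiliary point—the componentwise maximum $\distributionFunction\vee\discrete{g}$ and the interpolating segments—must lie in $\invariantCompactSetDistributions$, since \Cref{prop:monotonicityConditions} only guarantees non-negativity of the Jacobian there. This is precisely why it matters that $\invariantCompactSetDistributions$ is a product of intervals, making it simultaneously convex and closed under $\vee$ and $\wedge$, and why the hypothesis $(\distributionFunction_1,\dots,\distributionFunction_{\numberVelocities}),(\discrete{g}_1,\dots,\discrete{g}_{\numberVelocities})\in\invariantCompactSetDistributions$ cannot be dispensed with.
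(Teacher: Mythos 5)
Your proof is correct, and it takes a genuinely different route from the paper's. The paper argues by direct computation: it writes the differences $\collisionOperator_{\indexVelocity}(\discrete{g}_1, \dots) - \collisionOperator_{\indexVelocity}(\distributionFunction_1, \dots)$ link by link, applies the mean value theorem to the flux terms, recognizes the integrands as the (non-negative) Jacobian entries, and then adds the two absolute values within each link; there the flux contributions $\pm\relaxationParameterAntiSymmetric \sum_{\indexDirection}\equilibriumCoefficientFlux_{2\indexLink, \indexDirection}\flux_{\indexDirection}'$ cancel pairwise, leaving the coefficient $1 - \relaxationParameterSymmetric + 2\relaxationParameterSymmetric\equilibriumCoefficientLinear_{2\indexLink}$ for each link, and the consistency constraint \eqref{eq:constraintsConsistency3} then makes the overall $\petitLebesgueSpace{1}$-Lipschitz constant exactly one. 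You instead isolate the two structural ingredients---order preservation (proved, just as in \Cref{prop:compactInvariantSets}, by integrating the non-negative Jacobian along a segment that stays in $\invariantCompactSetDistributions$ by convexity) and conservation of $\sum_{\indexVelocity}\distributionFunction_{\indexVelocity}$ under relaxation (your cancellation of the anti-symmetric parts is exactly right)---and then run the lattice argument with the componentwise maximum $\distributionFunction\vee\discrete{g}$, which stays in $\invariantCompactSetDistributions$ because that set is a box. Both proofs rest on the same pillars (non-negativity of the Jacobian from \Cref{prop:monotonicityConditions}, and the consistency constraints), but your assembly never writes the Jacobian out, so it transfers verbatim to any $\continuousSpace{1}$ collision operator that is monotone on a product of intervals and conserves the moment sum---for instance other relaxation structures---whereas the paper's computation exposes precisely \emph{how} the TRT link structure produces the contraction, namely through the within-link cancellation of the non-linear terms. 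Your closing caveat, that every auxiliary point (segments and $\distributionFunction\vee\discrete{g}$) must lie in $\invariantCompactSetDistributions$ since monotonicity is only guaranteed there, is indeed the one delicate point, and your argument handles it. One bibliographic quibble: the property ``order-preserving plus mass-conserving implies $\petitLebesgueSpace{1}$-contraction'' is the Crandall--Tartar lemma, whereas the reference \cite{crandall1980monotone} used in this paper concerns monotone difference approximations and their convergence; the mathematics you invoke is nevertheless exactly the finite-dimensional instance of that lemma.
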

\begin{proof}
    Let $\discrete{v} \definitionEquality\sum_{\indexVelocity=1}^{\indexVelocity=\numberVelocities}\discrete{g}_{\indexVelocity}$ and $\conservedVariableDiscrete \definitionEquality\sum_{\indexVelocity=1}^{\indexVelocity=\numberVelocities}\distributionFunction_{\indexVelocity}$.
    We rely on the application of the triangle inequality: a careful ``link-wise'' decomposition of the relaxation operator is thus needed.
    It reads as follows.
    \begin{equation*}
        \collisionOperator_{1}(\discrete{g}_1, \dots, \discrete{g}_{\numberVelocities}) -  \collisionOperator_{1}(\distributionFunction_1, \dots, \distributionFunction_{\numberVelocities}) = (1-\relaxationParameterSymmetric + \relaxationParameterSymmetric\equilibriumCoefficientLinear_1) (\discrete{g}_1-\distributionFunction_1) + \relaxationParameterSymmetric\equilibriumCoefficientLinear_1 \sum_{\indexVelocity=2}^{\numberVelocities} (\discrete{g}_{\indexVelocity} - \distributionFunction_{\indexVelocity}).
    \end{equation*}
    For the $\indexLink$-th link, with $\indexLink\in\integerInterval{1}{\numberLinks}$:
    \begin{alignat*}{1}
        &\collisionOperator_{2\indexLink}(\discrete{g}_1, \dots) -  \collisionOperator_{2\indexLink}(\distributionFunction_1, \dots) = \bigl ( 1 - \tfrac{1}{2}(\relaxationParameterSymmetric+\relaxationParameterAntiSymmetric)\bigr )(\discrete{g}_{2\indexLink}-\distributionFunction_{2\indexLink}) \\
        &\qquad- \tfrac{1}{2}(\relaxationParameterSymmetric-\relaxationParameterAntiSymmetric) (\discrete{g}_{2\indexLink+1}-\distributionFunction_{2\indexLink+1})
        +\relaxationParameterSymmetric\equilibriumCoefficientLinear_{2\indexLink}(\discrete{v}-\conservedVariableDiscrete) + \relaxationParameterAntiSymmetric \sum_{\indexDirection=1}^{\spatialDimensionality} \equilibriumCoefficientFlux_{2\indexLink, \indexDirection} (\flux_{\indexDirection}(\discrete{v}) - \flux_{\indexDirection}(\conservedVariableDiscrete)), \\
        &\collisionOperator_{2\indexLink + 1}(\discrete{g}_1, \dots) -  \collisionOperator_{2\indexLink + 1}(\distributionFunction_1, \dots) = \bigl ( 1 - \tfrac{1}{2}(\relaxationParameterSymmetric+\relaxationParameterAntiSymmetric)\bigr )(\discrete{g}_{2\indexLink+1}-\distributionFunction_{2\indexLink + 1}) \\
        &\qquad- \tfrac{1}{2}(\relaxationParameterSymmetric-\relaxationParameterAntiSymmetric) (\discrete{g}_{2\indexLink}-\distributionFunction_{2\indexLink})
        +\relaxationParameterSymmetric\equilibriumCoefficientLinear_{2\indexLink}(\discrete{v}-\conservedVariableDiscrete) - \relaxationParameterAntiSymmetric \sum_{\indexDirection=1}^{\spatialDimensionality} \equilibriumCoefficientFlux_{2\indexLink, \indexDirection} (\flux_{\indexDirection}(\discrete{v}) - \flux_{\indexDirection}(\conservedVariableDiscrete)).
    \end{alignat*}
    Since $\discrete{v}, \conservedVariableDiscrete\in [-\maximumInitialDatum, \maximumInitialDatum]$, we apply the mean value theorem and then decompose $\discrete{v}$ and $\conservedVariableDiscrete$ as sums of distribution functions.
    Introducing $\discrete{w}_{\vartheta}\definitionEquality\vartheta\discrete{v}+(1-\vartheta)\conservedVariableDiscrete\in[-\maximumInitialDatum, \maximumInitialDatum]$:
    \begin{multline*}
        \collisionOperator_{2\indexLink}(\discrete{g}_1, \dots) -  \collisionOperator_{2\indexLink}(\distributionFunction_1, \dots) \\
        = (\discrete{g}_{2\indexLink}-\distributionFunction_{2\indexLink}) \int_0^1 \Bigl ( 1 - \tfrac{1}{2}(\relaxationParameterSymmetric+\relaxationParameterAntiSymmetric)  +\relaxationParameterSymmetric\equilibriumCoefficientLinear_{2\indexLink} + \relaxationParameterAntiSymmetric \sum_{\indexDirection=1}^{\spatialDimensionality} \equilibriumCoefficientFlux_{2\indexLink, \indexDirection} \flux'_{\indexDirection}(\discrete{w}_{\vartheta}) \Bigr )\differential\vartheta \\
        +(\discrete{g}_{2\indexLink+1}-\distributionFunction_{2\indexLink+1}) \int_0^1\Bigl ( - \tfrac{1}{2}(\relaxationParameterSymmetric-\relaxationParameterAntiSymmetric) +\relaxationParameterSymmetric\equilibriumCoefficientLinear_{2\indexLink} + \relaxationParameterAntiSymmetric \sum_{\indexDirection=1}^{\spatialDimensionality} \equilibriumCoefficientFlux_{2\indexLink, \indexDirection} \flux'_{\indexDirection}(\discrete{w}_{\vartheta}) \Bigr )\differential\vartheta \\
        + \sum_{\indexVelocity\neq2\indexLink, 2\indexLink+1}(\discrete{g}_{\indexVelocity}-\distributionFunction_{\indexVelocity}) \int_0^1\Bigl ( \relaxationParameterSymmetric\equilibriumCoefficientLinear_{2\indexLink} + \relaxationParameterAntiSymmetric \sum_{\indexDirection=1}^{\spatialDimensionality} \equilibriumCoefficientFlux_{2\indexLink, \indexDirection} \flux'_{\indexDirection}(\discrete{w}_{\vartheta}) \Bigr )\differential\vartheta, 
    \end{multline*}
    \begin{multline*}
        \collisionOperator_{2\indexLink+1}(\discrete{g}_1, \dots) -  \collisionOperator_{2\indexLink + 1}(\distributionFunction_1, \dots) \\
        = (\discrete{g}_{2\indexLink+1}-\distributionFunction_{2\indexLink+1}) \int_0^1 \Bigl ( 1 - \tfrac{1}{2}(\relaxationParameterSymmetric+\relaxationParameterAntiSymmetric)  +\relaxationParameterSymmetric\equilibriumCoefficientLinear_{2\indexLink} - \relaxationParameterAntiSymmetric \sum_{\indexDirection=1}^{\spatialDimensionality} \equilibriumCoefficientFlux_{2\indexLink, \indexDirection} \flux'_{\indexDirection}(\discrete{w}_{\vartheta}) \Bigr )\differential\vartheta \\
        +(\discrete{g}_{2\indexLink}-\distributionFunction_{2\indexLink}) \int_0^1\Bigl ( - \tfrac{1}{2}(\relaxationParameterSymmetric-\relaxationParameterAntiSymmetric) +\relaxationParameterSymmetric\equilibriumCoefficientLinear_{2\indexLink} - \relaxationParameterAntiSymmetric \sum_{\indexDirection=1}^{\spatialDimensionality} \equilibriumCoefficientFlux_{2\indexLink, \indexDirection} \flux'_{\indexDirection}(\discrete{w}_{\vartheta}) \Bigr )\differential\vartheta \\
        + \sum_{\indexVelocity\neq2\indexLink, 2\indexLink+1}(\discrete{g}_{\indexVelocity}-\distributionFunction_{\indexVelocity}) \int_0^1\Bigl ( \relaxationParameterSymmetric\equilibriumCoefficientLinear_{2\indexLink} - \relaxationParameterAntiSymmetric \sum_{\indexDirection=1}^{\spatialDimensionality} \equilibriumCoefficientFlux_{2\indexLink, \indexDirection} \flux'_{\indexDirection}(\discrete{w}_{\vartheta}) \Bigr )\differential\vartheta.
    \end{multline*}
    The integrands in the previous equations are the entries of the Jacobian of the relaxation operator.
    Therefore, since $\discrete{w}_{\vartheta}\in[-\maximumInitialDatum, \maximumInitialDatum]$, they are non-negative.
    Using this, plus properties of the Lebesgue integral, and the triangle inequality yields
    \begin{multline*}
        |\collisionOperator_{2\indexLink}(\discrete{g}_1, \dots) -  \collisionOperator_{2\indexLink}(\distributionFunction_1, \dots) | + |\collisionOperator_{2\indexLink+1}(\discrete{g}_1, \dots) -  \collisionOperator_{2\indexLink+1}(\distributionFunction_1, \dots) | \\
        \leq ( 1 - \relaxationParameterSymmetric +2 \relaxationParameterSymmetric\equilibriumCoefficientLinear_{2\indexLink}) |\discrete{g}_{2\indexLink}-\distributionFunction_{2\indexLink}|  + ( 1 - \relaxationParameterSymmetric +2 \relaxationParameterSymmetric\equilibriumCoefficientLinear_{2\indexLink}) |\discrete{g}_{2\indexLink + 1}-\distributionFunction_{2\indexLink + 1}| + 2\relaxationParameterSymmetric\equilibriumCoefficientLinear_{2\indexLink} \sum_{\indexVelocity\neq2\indexLink, 2\indexLink+1}|\discrete{g}_{\indexVelocity}-\distributionFunction_{\indexVelocity}|.
    \end{multline*}
    We therefore obtain
    \begin{multline*}
        \sum_{\indexVelocity=1}^{\numberVelocities} |\collisionOperator_{\indexVelocity}(\discrete{g}_1, \dots) -  \collisionOperator_{\indexVelocity}(\distributionFunction_1, \dots) | 
        \leq (1-\relaxationParameterSymmetric + \relaxationParameterSymmetric\equilibriumCoefficientLinear_1) |\discrete{g}_1-\distributionFunction_1| + \relaxationParameterSymmetric\equilibriumCoefficientLinear_1 \sum_{\indexVelocity=2}^{\numberVelocities} |\discrete{g}_{\indexVelocity} - \distributionFunction_{\indexVelocity}| \\
        +\sum_{\indexLink=1}^{\numberLinks}\Bigl ( ( 1 - \relaxationParameterSymmetric +2 \relaxationParameterSymmetric\equilibriumCoefficientLinear_{2\indexLink}) |\discrete{g}_{2\indexLink}-\distributionFunction_{2\indexLink}|  + ( 1 - \relaxationParameterSymmetric +2 \relaxationParameterSymmetric\equilibriumCoefficientLinear_{2\indexLink}) |\discrete{g}_{2\indexLink + 1}-\distributionFunction_{2\indexLink + 1}| \\
        + 2\relaxationParameterSymmetric\equilibriumCoefficientLinear_{2\indexLink} \sum_{\indexVelocity\neq2\indexLink, 2\indexLink+1}|\discrete{g}_{\indexVelocity}-\distributionFunction_{\indexVelocity}|\Bigr )\\
        =\Bigl ( 1-\relaxationParameterSymmetric+ \relaxationParameterSymmetric(\equilibriumCoefficientLinear_1+2\sum_{\indexLink=1}^{\numberLinks}\equilibriumCoefficientLinear_{2\indexLink})\Bigr ) \sum_{\indexVelocity=1}^{\numberVelocities} |\discrete{g}_{\indexVelocity} - \distributionFunction_{\indexVelocity}| = \sum_{\indexVelocity=1}^{\numberVelocities} |\discrete{g}_{\indexVelocity} - \distributionFunction_{\indexVelocity}|,
    \end{multline*}
    where the last equality comes from \eqref{eq:constraintsConsistency3}.
\end{proof}

\begin{proposition}[$\lebesgueSpace{1}$-contractivity of the scheme]
    Let the conditions by \Cref{prop:monotonicityConditions} be satisfied.
    Consider two numerical solutions $\distributionFunction_{\indexVelocity, \vectorial{\indexSpace}}^{\indexTime}$ and $\discrete{g}_{\indexVelocity, \vectorial{\indexSpace}}^{\indexTime}$ obtained from initial data $\conservedVariable^{\initial}$ and $v^{\initial}$, such that $\conservedVariable^{\initial}, v^{\initial}\in \lebesgueSpace{1}(\reals^{\spatialDimensionality}) \cap \lebesgueSpace{\infty}(\reals^{\spatialDimensionality}) \cap \boundedVariationSpace(\reals^{\spatialDimensionality})$ and $\lVert\conservedVariable^{\initial}\rVert_{\lebesgueSpace{\infty}} \leq \maximumInitialDatum$, $\lVert v^{\initial}\rVert_{\lebesgueSpace{\infty}} \leq \maximumInitialDatum$.
    Then, for all $\indexTime \in\naturals$
    \begin{equation}\label{eq:L1contraction}
        \lVert \distributionFunctionsAsFunctionSpaceAnyLetter{g}{\indexTime + 1} - \distributionFunctionsAsFunctionSpace{\indexTime + 1} \rVert_{\lebesgueSpace{1}} \leq \lVert \distributionFunctionsAsFunctionSpaceAnyLetter{g}{\indexTime} - \distributionFunctionsAsFunctionSpace{\indexTime} \rVert_{\lebesgueSpace{1}} \leq \lVert v^{\initial}-\conservedVariable^{\initial}\rVert_{\lebesgueSpace{1}}.
    \end{equation}
    Moreover, there exists $C>0$ such that 
    \begin{equation}\label{eq:timeEquicontinuity1}
        \lVert\distributionFunctionsAsFunctionSpace{\indexTime + 1} - \distributionFunctionsAsFunctionSpace{\indexTime} \rVert_{\lebesgueSpace{1}}  \leq C \spaceStep \totalVariation{\conservedVariable^{\initial}}.
    \end{equation}
\end{proposition}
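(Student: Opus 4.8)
The plan is to exploit the two structural facts already at our disposal: the transport step \eqref{eq:transport} is a pure relabelling of grid points, hence an $\lebesgueSpace{1}$-isometry, while the relaxation step is an $\petitLebesgueSpace{1}$-contraction cell-by-cell by \Cref{prop:contractivityCollision}. Since \Cref{prop:compactInvariantSets} guarantees that both $(\distributionFunction_{\indexVelocity, \vectorial{\indexSpace}}^{\indexTime})$ and $(\discrete{g}_{\indexVelocity, \vectorial{\indexSpace}}^{\indexTime})$ stay in $\invariantCompactSetDistributions$ at every grid point, I would first write the post-collision difference as $\spaceStep^{\spatialDimensionality}\sum_{\vectorial{\indexSpace}}\sum_{\indexVelocity}|\collisionOperator_{\indexVelocity}(\discrete{g}_{1,\vectorial{\indexSpace}}^{\indexTime},\dots) - \collisionOperator_{\indexVelocity}(\distributionFunction_{1,\vectorial{\indexSpace}}^{\indexTime},\dots)|$, apply \Cref{prop:contractivityCollision} inside each cell, and sum; then observe that transport merely permutes the summation index $\vectorial{\indexSpace}\mapsto\vectorial{\indexSpace}-\vectorial{\discreteVelocityLetter}_{\indexVelocity}/\latticeVelocity$ (a lattice translation, as $\vectorial{\discreteVelocityLetter}_{\indexVelocity}\in\latticeVelocity\relatives^{\spatialDimensionality}$), leaving the $\lebesgueSpace{1}$-norm unchanged. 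This yields the first inequality in \eqref{eq:L1contraction}.

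For the second inequality I would iterate the first down to $\indexTime=0$ and then bound the initial mismatch. Because of the equilibrium initialization \eqref{eq:initializationEquilibrium}, the two states $\distributionFunctionsAsFunctionSpace{0}$ and $\distributionFunctionsAsFunctionSpaceAnyLetter{g}{0}$ differ only through the cell averages $\conservedVariableDiscrete_{\vectorial{\indexSpace}}^{0}=\dashint_{\cell{\vectorial{\indexSpace}}}\conservedVariable^{\initial}$ and $\discrete{v}_{\vectorial{\indexSpace}}^{0}=\dashint_{\cell{\vectorial{\indexSpace}}}v^{\initial}$. Using that the equilibria are monotone non-decreasing (\Cref{prop:monotonicityEquilibria}, available since $\monotoneZoneTwoD\neq\varnothing$ under the hypotheses of \Cref{prop:monotonicityConditions}) together with the conservation constraint $\sum_{\indexVelocity}\distributionFunctionLetter_{\indexVelocity}^{\atEquilibrium}(\conservedVariableDiscrete)=\conservedVariableDiscrete$ from \eqref{eq:constraintsConsistency}, I would show the telescoping identity $\sum_{\indexVelocity}|\distributionFunctionLetter_{\indexVelocity}^{\atEquilibrium}(a)-\distributionFunctionLetter_{\indexVelocity}^{\atEquilibrium}(b)|=|a-b|$ for $a,b\in[-\maximumInitialDatum,\maximumInitialDatum]$ (all the increments share the sign of $a-b$, so the absolute values drop out). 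Jensen's inequality applied to each cell average, $|\discrete{v}_{\vectorial{\indexSpace}}^{0}-\conservedVariableDiscrete_{\vectorial{\indexSpace}}^{0}|\leq\dashint_{\cell{\vectorial{\indexSpace}}}|v^{\initial}-\conservedVariable^{\initial}|$, then gives $\lVert\distributionFunctionsAsFunctionSpaceAnyLetter{g}{0}-\distributionFunctionsAsFunctionSpace{0}\rVert_{\lebesgueSpace{1}}=\spaceStep^{\spatialDimensionality}\sum_{\vectorial{\indexSpace}}|\discrete{v}_{\vectorial{\indexSpace}}^{0}-\conservedVariableDiscrete_{\vectorial{\indexSpace}}^{0}|\leq\lVert v^{\initial}-\conservedVariable^{\initial}\rVert_{\lebesgueSpace{1}}$, closing the chain.

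For the time-equicontinuity estimate \eqref{eq:timeEquicontinuity1} I would reuse the per-step contraction, now applied to the two consecutive states $\distributionFunctionsAsFunctionSpace{\indexTime}$ and $\distributionFunctionsAsFunctionSpace{\indexTime-1}$ (both in $\invariantCompactSetDistributions$ by \Cref{prop:compactInvariantSets}), obtaining the telescoping bound $\lVert\distributionFunctionsAsFunctionSpace{\indexTime+1}-\distributionFunctionsAsFunctionSpace{\indexTime}\rVert_{\lebesgueSpace{1}}\leq\lVert\distributionFunctionsAsFunctionSpace{1}-\distributionFunctionsAsFunctionSpace{0}\rVert_{\lebesgueSpace{1}}$. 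The crucial simplification is that, since $\distributionFunctionsAsFunctionSpace{0}$ sits exactly at equilibrium, relaxation acts as the identity there (the equilibrium is the eigenstate of $\vectorial{\collisionOperator}$ with eigenvalue one, as used in the proof of \Cref{prop:compactInvariantSets}), so $\distributionFunction_{\indexVelocity,\vectorial{\indexSpace}}^{1}=\distributionFunctionLetter_{\indexVelocity}^{\atEquilibrium}(\conservedVariableDiscrete_{\vectorial{\indexSpace}-\vectorial{\discreteVelocityLetter}_{\indexVelocity}/\latticeVelocity}^{0})$ is a shifted equilibrium. Hence $\distributionFunction_{\indexVelocity,\vectorial{\indexSpace}}^{1}-\distributionFunction_{\indexVelocity,\vectorial{\indexSpace}}^{0}=\distributionFunctionLetter_{\indexVelocity}^{\atEquilibrium}(\conservedVariableDiscrete_{\vectorial{\indexSpace}-\vectorial{\discreteVelocityLetter}_{\indexVelocity}/\latticeVelocity}^{0})-\distributionFunctionLetter_{\indexVelocity}^{\atEquilibrium}(\conservedVariableDiscrete_{\vectorial{\indexSpace}}^{0})$, which I would estimate by the (finite, since $\flux_{\indexDirection}\in\continuousSpace{1}$ on the compact range) Lipschitz constant of $\distributionFunctionLetter_{\indexVelocity}^{\atEquilibrium}$ times $|\conservedVariableDiscrete_{\vectorial{\indexSpace}-\vectorial{\discreteVelocityLetter}_{\indexVelocity}/\latticeVelocity}^{0}-\conservedVariableDiscrete_{\vectorial{\indexSpace}}^{0}|$.

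Finally, I would convert the shifted-difference sum into total variation: each lattice shift decomposes into $\lVert\vectorial{\discreteVelocityLetter}_{\indexVelocity}/\latticeVelocity\rVert_1$ unit coordinate steps, so by the triangle inequality and translation-invariance $\sum_{\vectorial{\indexSpace}}|\conservedVariableDiscrete_{\vectorial{\indexSpace}-\vectorial{\discreteVelocityLetter}_{\indexVelocity}/\latticeVelocity}^{0}-\conservedVariableDiscrete_{\vectorial{\indexSpace}}^{0}|\leq\spaceStep^{-(\spatialDimensionality-1)}\bigl(\max_{\indexVelocity}\lVert\vectorial{\discreteVelocityLetter}_{\indexVelocity}/\latticeVelocity\rVert_1\bigr)\totalVariation{\conservedVariableDiscreteAsFunctionSpace{0}}$. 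The $\spaceStep^{\spatialDimensionality}$ weight then leaves one power of $\spaceStep$, and the standard fact that cell-averaging does not increase total variation, $\totalVariation{\conservedVariableDiscreteAsFunctionSpace{0}}\leq\totalVariation{\conservedVariable^{\initial}}$, delivers \eqref{eq:timeEquicontinuity1} with $C$ depending only on the velocities and the equilibrium Lipschitz constants. I expect the main difficulty to be bookkeeping rather than conceptual: carrying the discrete-$\lebesgueSpace{1}$ normalisation (the $\spaceStep^{\spatialDimensionality}$ weights and the $\petitLebesgueSpace{1}$ convention of \Cref{rem:normVectorSpace}) correctly through the shift-to-TV reduction, and justifying that the per-step contraction may be invoked on pairs of states that, from time one onward, are only guaranteed to lie in $\invariantCompactSetDistributions$ and are no longer at equilibrium.
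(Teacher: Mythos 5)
Your proposal is correct and follows essentially the same route as the paper's proof: invariant compact sets plus the $\petitLebesgueSpace{1}$-contractivity of \Cref{prop:contractivityCollision} combined with the permutation structure of transport for \eqref{eq:L1contraction}, iteration down to the equilibrium initialization with the telescoping identity $\sum_{\indexVelocity}|\distributionFunctionLetter_{\indexVelocity}^{\atEquilibrium}(a)-\distributionFunctionLetter_{\indexVelocity}^{\atEquilibrium}(b)|=|a-b|$ for the initial mismatch, and the equilibrium-eigenstate property plus Lipschitz equilibria and a shift-to-total-variation bound for \eqref{eq:timeEquicontinuity1}. The only (harmless) cosmetic difference is that in the last step you use directly that relaxation acts as the identity on equilibria, where the paper writes both states as $\collisionOperator_{\indexVelocity}$ applied to equilibrium vectors and invokes contractivity, which there reduces to an equality anyway.
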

\begin{proof}
    We can apply \Cref{prop:compactInvariantSets} to both $\distributionFunction_{\indexVelocity, \vectorial{\indexSpace}}^{\indexTime}$ and $\discrete{g}_{\indexVelocity, \vectorial{\indexSpace}}^{\indexTime}$, hence make use of \Cref{prop:contractivityCollision}.
    This yields 
    \begin{align*}
        \lVert \distributionFunctionsAsFunctionSpaceAnyLetter{g}{\indexTime + 1} - \distributionFunctionsAsFunctionSpace{\indexTime + 1} \rVert_{\lebesgueSpace{1}} &= \spaceStep^{\spatialDimensionality} \sum_{\vectorial{\indexSpace}\in\relatives^{\spatialDimensionality}} \sum_{\indexVelocity=1}^{\numberVelocities} |\discrete{g}^{\indexTime, \collided}_{\indexVelocity, \vectorial{\indexSpace}-\vectorial{\discreteVelocityLetter}_{\indexVelocity}/\latticeVelocity} - \distributionFunction^{\indexTime, \collided}_{\indexVelocity, \vectorial{\indexSpace}-\vectorial{\discreteVelocityLetter}_{\indexVelocity}/\latticeVelocity}| = \spaceStep^{\spatialDimensionality} \sum_{\vectorial{\indexSpace}\in\relatives^{\spatialDimensionality}} \sum_{\indexVelocity=1}^{\numberVelocities} |\discrete{g}^{\indexTime, \collided}_{\indexVelocity, \vectorial{\indexSpace}} - \distributionFunction^{\indexTime, \collided}_{\indexVelocity, \vectorial{\indexSpace}}|\\
        &= \spaceStep^{\spatialDimensionality} \sum_{\vectorial{\indexSpace}\in\relatives^{\spatialDimensionality}} \sum_{\indexVelocity=1}^{\numberVelocities} |\collisionOperator_{\indexVelocity}(\discrete{g}^{\indexTime}_{1, \vectorial{\indexSpace}}, \dots, \discrete{g}^{\indexTime}_{\numberVelocities, \vectorial{\indexSpace}}  )-\collisionOperator_{\indexVelocity}(\distributionFunction^{\indexTime}_{1, \vectorial{\indexSpace}}, \dots, \distributionFunction^{\indexTime}_{\numberVelocities, \vectorial{\indexSpace}})|\\
        &\leq \spaceStep^{\spatialDimensionality} \sum_{\vectorial{\indexSpace}\in\relatives^{\spatialDimensionality}} \sum_{\indexVelocity=1}^{\numberVelocities} |\discrete{g}^{\indexTime}_{\indexVelocity, \vectorial{\indexSpace}} - \distributionFunction^{\indexTime}_{\indexVelocity, \vectorial{\indexSpace}}| = \lVert \distributionFunctionsAsFunctionSpaceAnyLetter{g}{\indexTime} - \distributionFunctionsAsFunctionSpace{\indexTime} \rVert_{\lebesgueSpace{1}}.
    \end{align*}
    Iterating on $\indexTime$, we climb time back until reaching 
    \begin{equation*}
        \lVert \distributionFunctionsAsFunctionSpaceAnyLetter{g}{0} - \distributionFunctionsAsFunctionSpace{0} \rVert_{\lebesgueSpace{1}} = \spaceStep^{\spatialDimensionality} \sum_{\vectorial{\indexSpace}\in\relatives^{\spatialDimensionality}} \sum_{\indexVelocity=1}^{\numberVelocities} \Bigl |\distributionFunctionLetter_{\indexVelocity}^{\atEquilibrium}\Bigl (\dashint_{\cell{\vectorial{\indexSpace}}} v^{\initial}(\vectorial{\spaceVariable})\differential\vectorial{\spaceVariable}\Bigr )  -\distributionFunctionLetter_{\indexVelocity}^{\atEquilibrium}\Bigl (\dashint_{\cell{\vectorial{\indexSpace}}} \conservedVariable^{\initial}(\vectorial{\spaceVariable})\differential\vectorial{\spaceVariable}\Bigr ) \Bigr |.
    \end{equation*}
    The equilibria are monotone non-decreasing, thanks to \Cref{prop:monotonicityEquilibria}, thus:
    \begin{equation*}
        \lVert \distributionFunctionsAsFunctionSpaceAnyLetter{g}{0} - \distributionFunctionsAsFunctionSpace{0} \rVert_{\lebesgueSpace{1}} =  \spaceStep^{\spatialDimensionality} \sum_{\vectorial{\indexSpace}\in\relatives^{\spatialDimensionality}} \Bigl |\sum_{\indexVelocity=1}^{\numberVelocities} \distributionFunctionLetter_{\indexVelocity}^{\atEquilibrium}\Bigl (\dashint_{\cell{\vectorial{\indexSpace}}} v^{\initial}(\vectorial{\spaceVariable})\differential\vectorial{\spaceVariable}\Bigr )  -\distributionFunctionLetter_{\indexVelocity}^{\atEquilibrium}\Bigl (\dashint_{\cell{\vectorial{\indexSpace}}} \conservedVariable^{\initial}(\vectorial{\spaceVariable})\differential\vectorial{\spaceVariable}\Bigr ) \Bigr |
        =\lVert v^{\initial}-\conservedVariable^{\initial}\rVert_{\lebesgueSpace{1}},
    \end{equation*}
    using \eqref{eq:constraintsConsistency}, proving \eqref{eq:L1contraction}.
    We analogously prove \eqref{eq:timeEquicontinuity1}:
    \begin{alignat*}{1}
        \lVert &\distributionFunctionsAsFunctionSpace{\indexTime + 1} - \distributionFunctionsAsFunctionSpace{\indexTime}  \rVert_{\lebesgueSpace{1}} = \spaceStep^{\spatialDimensionality} \sum_{\vectorial{\indexSpace}\in\relatives^{\spatialDimensionality}} \sum_{\indexVelocity=1}^{\numberVelocities} |\collisionOperator_{\indexVelocity}(\distributionFunction^{\indexTime}_{1, \vectorial{\indexSpace}-\vectorial{\discreteVelocityLetter}_{\indexVelocity}/\latticeVelocity}, \dots) - \collisionOperator_{\indexVelocity}(\distributionFunction^{\indexTime-1}_{1, \vectorial{\indexSpace}-\vectorial{\discreteVelocityLetter}_{\indexVelocity}/\latticeVelocity}, \dots)|\\
        &\leq\spaceStep^{\spatialDimensionality} \sum_{\vectorial{\indexSpace}\in\relatives^{\spatialDimensionality}} \sum_{\indexVelocity=1}^{\numberVelocities} |\distributionFunction^{\indexTime}_{\indexVelocity, \vectorial{\indexSpace}} - \distributionFunction^{\indexTime-1}_{\indexVelocity, \vectorial{\indexSpace}}| \leq  \spaceStep^{\spatialDimensionality} \sum_{\vectorial{\indexSpace}\in\relatives^{\spatialDimensionality}} \sum_{\indexVelocity=1}^{\numberVelocities} \Bigl |\distributionFunction^{1}_{\indexVelocity, \vectorial{\indexSpace}} - \distributionFunctionLetter_{\indexVelocity}^{\atEquilibrium} \Bigl (\dashint_{\cell{\vectorial{\indexSpace}}}\conservedVariable^{\initial} (\vectorial{\spaceVariable})\differential\vectorial{\spaceVariable}\Bigr ) \Bigr |\\
        &=\spaceStep^{\spatialDimensionality} \sum_{\vectorial{\indexSpace}\in\relatives^{\spatialDimensionality}} \sum_{\indexVelocity=1}^{\numberVelocities} \Bigl |\collisionOperator_{\indexVelocity}\Bigl ( \distributionFunctionLetter_{1}^{\atEquilibrium} \Bigl (\dashint_{\cell{\vectorial{\indexSpace}-\vectorial{\discreteVelocityLetter}_{\indexVelocity}/\latticeVelocity}}\conservedVariable^{\initial} (\vectorial{\spaceVariable})\differential\vectorial{\spaceVariable}\Bigr ), \dots \Bigr )  - \collisionOperator_{\indexVelocity}\Bigl ( \distributionFunctionLetter_{1}^{\atEquilibrium} \Bigl (\dashint_{\cell{\vectorial{\indexSpace}}}\conservedVariable^{\initial} (\vectorial{\spaceVariable})\differential\vectorial{\spaceVariable}\Bigr ), \dots \Bigr ) \Bigr |\\
        &\leq\spaceStep^{\spatialDimensionality} \sum_{\vectorial{\indexSpace}\in\relatives^{\spatialDimensionality}} \sum_{\indexVelocity=1}^{\numberVelocities} \Bigl |\distributionFunctionLetter_{\indexVelocity}^{\atEquilibrium} \Bigl (\dashint_{\cell{\vectorial{\indexSpace}-\vectorial{\discreteVelocityLetter}_{\indexVelocity}/\latticeVelocity}}\conservedVariable^{\initial} (\vectorial{\spaceVariable})\differential\vectorial{\spaceVariable}\Bigr )- \distributionFunctionLetter_{\indexVelocity}^{\atEquilibrium} \Bigl (\dashint_{\cell{\vectorial{\indexSpace}}}\conservedVariable^{\initial} (\vectorial{\spaceVariable})\differential\vectorial{\spaceVariable}\Bigr )\Bigr |\\
        &\leq \sum_{\vectorial{\indexSpace}\in\relatives^{\spatialDimensionality}} \sum_{\indexVelocity=1}^{\numberVelocities}  \Bigl |  \int_{\cell{\vectorial{\indexSpace}-\vectorial{\discreteVelocityLetter}_{\indexVelocity}/\latticeVelocity}}\conservedVariable^{\initial} (\vectorial{\spaceVariable})\differential\vectorial{\spaceVariable} - \int_{\cell{\vectorial{\indexSpace}}}\conservedVariable^{\initial} (\vectorial{\spaceVariable})\differential\vectorial{\spaceVariable} \Bigr | \leq C \spaceStep \totalVariation{\conservedVariable^{\initial}}.
    \end{alignat*}
    Here, to go from the penultimate to the last line, we used that for all $\indexVelocity\in\integerInterval{1}{\numberVelocities}$, if $a, b\in[-\maximumInitialDatum, \maximumInitialDatum]$, then 
    \begin{equation*}
        |\distributionFunctionLetter_{\indexVelocity}^{\atEquilibrium}(a)-\distributionFunctionLetter_{\indexVelocity}^{\atEquilibrium}(b)|\leq \Bigl ( \sup_{\conservedVariableDiscrete\in[-\maximumInitialDatum, \maximumInitialDatum]} \Bigl |\frac{\differential\distributionFunctionLetter_{\indexVelocity}^{\atEquilibrium}(\conservedVariableDiscrete)}{\differential\conservedVariableDiscrete}\Bigr | \Bigr ) |a-b| \leq |a-b|,
    \end{equation*}
    since  \eqref{eq:constraintsConsistency} gives that for $\conservedVariableDiscrete \in [-\maximumInitialDatum, \maximumInitialDatum]$:
    \begin{equation*}
        \sum_{\indexVelocity = 1}^{\numberVelocities} \frac{\differential\distributionFunctionLetter_{\indexVelocity}^{\atEquilibrium}(\conservedVariableDiscrete)}{\differential\conservedVariableDiscrete}  = 1, \qquad \text{and monotonicity gives}\qquad \frac{\differential\distributionFunctionLetter_{\indexVelocity}^{\atEquilibrium}(\conservedVariableDiscrete)}{\differential\conservedVariableDiscrete}\geq 0.
    \end{equation*}
\end{proof}

\begin{corollary}[Equicontinuity in time]
    Let the condition by \Cref{prop:monotonicityConditions} be satisfied.
    Then, there exists $C>0$ such that, for every $0\leq \timeVariable\leq\tilde{\timeVariable}$
    \begin{equation*}
        \lVert \distributionFunctionsAsFunction(\tilde{\timeVariable}, \cdot) - \distributionFunctionsAsFunction(\timeVariable, \cdot) \rVert_{\lebesgueSpace{1}}\leq \latticeVelocity C (\tilde{\timeVariable}-\timeVariable + \timeStep) \totalVariation{\conservedVariable^{\initial}}.
    \end{equation*}
\end{corollary}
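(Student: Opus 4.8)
The plan is to exploit that $\distributionFunctionsAsFunction$ is piecewise constant in time, so that the desired estimate reduces to counting how many elementary time-steps separate $\timeVariable$ and $\tilde{\timeVariable}$ and then summing the one-step bound \eqref{eq:timeEquicontinuity1}.

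First, I would locate the time-slabs containing the two instants: let $\indexTime, m\in\naturals$ be such that $\timeVariable\in[\timeGridPoint{\indexTime}, \timeGridPoint{\indexTime+1})$ and $\tilde{\timeVariable}\in[\timeGridPoint{m}, \timeGridPoint{m+1})$, \idEst{} $\indexTime=\lfloor\timeVariable/\timeStep\rfloor$ and $m=\lfloor\tilde{\timeVariable}/\timeStep\rfloor$; since $\timeVariable\leq\tilde{\timeVariable}$, one has $\indexTime\leq m$. By the very definition of the interpolant, $\distributionFunctionsAsFunction(\timeVariable,\cdot)=\distributionFunctionsAsFunctionSpace{\indexTime}$ and $\distributionFunctionsAsFunction(\tilde{\timeVariable},\cdot)=\distributionFunctionsAsFunctionSpace{m}$, so the left-hand side equals exactly $\lVert\distributionFunctionsAsFunctionSpace{m}-\distributionFunctionsAsFunctionSpace{\indexTime}\rVert_{\lebesgueSpace{1}}$. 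If $\indexTime=m$ this quantity vanishes and there is nothing to prove.

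Next, I would telescope and apply the triangle inequality together with the single-step estimate \eqref{eq:timeEquicontinuity1}, which holds for every index under the conditions of \Cref{prop:monotonicityConditions}:
\[
\lVert\distributionFunctionsAsFunctionSpace{m}-\distributionFunctionsAsFunctionSpace{\indexTime}\rVert_{\lebesgueSpace{1}}\leq\sum_{k=\indexTime}^{m-1}\lVert\distributionFunctionsAsFunctionSpace{k+1}-\distributionFunctionsAsFunctionSpace{k}\rVert_{\lebesgueSpace{1}}\leq(m-\indexTime)\,C\spaceStep\,\totalVariation{\conservedVariable^{\initial}}.
\]

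Finally, I would convert the number of steps into the time lag. From $m\leq\tilde{\timeVariable}/\timeStep$ and $\indexTime>\timeVariable/\timeStep-1$ we get $m-\indexTime<(\tilde{\timeVariable}-\timeVariable)/\timeStep+1=(\tilde{\timeVariable}-\timeVariable+\timeStep)/\timeStep$, and substituting $\spaceStep/\timeStep=\latticeVelocity$ (recall $\timeStep=\spaceStep/\latticeVelocity$) into the previous display yields
\[
\lVert\distributionFunctionsAsFunctionSpace{m}-\distributionFunctionsAsFunctionSpace{\indexTime}\rVert_{\lebesgueSpace{1}}\leq\frac{\tilde{\timeVariable}-\timeVariable+\timeStep}{\timeStep}\,C\spaceStep\,\totalVariation{\conservedVariable^{\initial}}=\latticeVelocity C(\tilde{\timeVariable}-\timeVariable+\timeStep)\,\totalVariation{\conservedVariable^{\initial}},
\]
which is the claim. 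I expect no genuine obstacle: the substance is entirely carried by \eqref{eq:timeEquicontinuity1}, and the only points demanding a little care are the floor-function book-keeping---in particular the same-slab case $\indexTime=m$---and the harmless conversion between $\spaceStep$ and $\timeStep$.
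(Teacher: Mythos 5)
Your proof is correct and follows exactly the route the paper intends: the corollary is stated without proof precisely because it is the immediate consequence of telescoping the one-step bound \eqref{eq:timeEquicontinuity1} over the $m-\indexTime$ elementary steps separating the two time-slabs, then converting steps to elapsed time via $\timeStep = \spaceStep/\latticeVelocity$. Your floor-function bookkeeping and the degenerate case $\indexTime = m$ are handled correctly, so nothing is missing.
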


\subsubsection{Total variation estimates}

\begin{proposition}
    Let the conditions by \Cref{prop:monotonicityConditions} be satisfied. Then, for all $\indexTime \in\naturals$
    \begin{equation*}
        \totalVariationVectorial{\distributionFunctionsAsFunctionSpace{\indexTime + 1}}\leq  \totalVariationVectorial{\distributionFunctionsAsFunctionSpace{\indexTime}} \leq \dots \leq \totalVariationVectorial{\distributionFunctionsAsFunctionSpace{0}} \leq  \totalVariation{\conservedVariable^{\initial}},
    \end{equation*}
    and 
    \begin{equation}\label{eq:totalVariationConservedControlledByVectorial}
        \totalVariation{\conservedVariableDiscreteAsFunctionSpace{\indexTime}}\leq \totalVariationVectorial{\distributionFunctionsAsFunctionSpace{\indexTime}}.
    \end{equation}
\end{proposition}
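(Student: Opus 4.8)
The plan is to push the estimate through a single collide-and-stream step and then induct on $\indexTime$, using the $\petitLebesgueSpace{1}$-contractivity of the relaxation operator (\Cref{prop:contractivityCollision}) together with the invariant-set property (\Cref{prop:compactInvariantSets}). First I would note that the transport step \eqref{eq:transport} does not change the total variation of the distribution functions. Indeed, $\distributionFunction_{\indexVelocity, \vectorial{\indexSpace}+\canonicalBasisVector{\indexDirection}}^{\indexTime + 1} - \distributionFunction_{\indexVelocity, \vectorial{\indexSpace}}^{\indexTime + 1} = \distributionFunction_{\indexVelocity, \vectorial{\indexSpace}+\canonicalBasisVector{\indexDirection} - \vectorial{\discreteVelocityLetter}_{\indexVelocity}/\latticeVelocity}^{\indexTime, \collided} - \distributionFunction_{\indexVelocity, \vectorial{\indexSpace} - \vectorial{\discreteVelocityLetter}_{\indexVelocity}/\latticeVelocity}^{\indexTime, \collided}$, and since $\vectorial{\discreteVelocityLetter}_{\indexVelocity}/\latticeVelocity\in\relatives^{\spatialDimensionality}$ by \eqref{eq:choiceDiscreteVelocities}, the translation $\vectorial{\indexSpace}\mapsto\vectorial{\indexSpace}-\vectorial{\discreteVelocityLetter}_{\indexVelocity}/\latticeVelocity$ is a bijection of the grid for each fixed $\indexVelocity$; reindexing the defining sum component by component yields
\begin{equation*}
    \totalVariationVectorial{\distributionFunctionsAsFunctionSpace{\indexTime+1}} = \spaceStep^{\spatialDimensionality-1}\sum_{\vectorial{\indexSpace}\in\relatives^{\spatialDimensionality}}\sum_{\indexDirection=1}^{\spatialDimensionality}\sum_{\indexVelocity=1}^{\numberVelocities}|\distributionFunction_{\indexVelocity, \vectorial{\indexSpace}+\canonicalBasisVector{\indexDirection}}^{\indexTime,\collided} - \distributionFunction_{\indexVelocity, \vectorial{\indexSpace}}^{\indexTime,\collided}|,
\end{equation*}
\idEst{} the total variation after streaming equals that of the post-collision state.

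Next I would bound the post-collision total variation by the pre-collision one. The collision is local, so for each fixed pair of neighbouring cells the two states $(\distributionFunction_{1, \vectorial{\indexSpace}+\canonicalBasisVector{\indexDirection}}^{\indexTime}, \dots, \distributionFunction_{\numberVelocities, \vectorial{\indexSpace}+\canonicalBasisVector{\indexDirection}}^{\indexTime})$ and $(\distributionFunction_{1, \vectorial{\indexSpace}}^{\indexTime}, \dots, \distributionFunction_{\numberVelocities, \vectorial{\indexSpace}}^{\indexTime})$ both belong to $\invariantCompactSetDistributions$ by \Cref{prop:compactInvariantSets}. Applying \Cref{prop:contractivityCollision} to this pair and summing over $\vectorial{\indexSpace}$ and $\indexDirection$ (with the weight $\spaceStep^{\spatialDimensionality-1}$) gives $\totalVariationVectorial{\distributionFunctionsAsFunctionSpace{\indexTime+1}}\leq\totalVariationVectorial{\distributionFunctionsAsFunctionSpace{\indexTime}}$. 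Iterating this single-step inequality produces the whole decreasing chain down to $\totalVariationVectorial{\distributionFunctionsAsFunctionSpace{0}}$.

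For the base case I would exploit the equilibrium initialization \eqref{eq:initializationEquilibrium}: writing $\bar{\conservedVariable}_{\vectorial{\indexSpace}}\definitionEquality\dashint_{\cell{\vectorial{\indexSpace}}}\conservedVariable^{\initial}(\vectorial{\spaceVariable})\differential\vectorial{\spaceVariable}$, monotonicity of the equilibria (\Cref{prop:monotonicityEquilibria}) makes every increment $\distributionFunctionLetter_{\indexVelocity}^{\atEquilibrium}(\bar{\conservedVariable}_{\vectorial{\indexSpace}+\canonicalBasisVector{\indexDirection}})-\distributionFunctionLetter_{\indexVelocity}^{\atEquilibrium}(\bar{\conservedVariable}_{\vectorial{\indexSpace}})$ carry the sign of $\bar{\conservedVariable}_{\vectorial{\indexSpace}+\canonicalBasisVector{\indexDirection}}-\bar{\conservedVariable}_{\vectorial{\indexSpace}}$, so that, using $\sum_{\indexVelocity}\distributionFunctionLetter_{\indexVelocity}^{\atEquilibrium}(\conservedVariableDiscrete)=\conservedVariableDiscrete$ from \eqref{eq:constraintsConsistency},
\begin{equation*}
    \sum_{\indexVelocity=1}^{\numberVelocities}|\distributionFunctionLetter_{\indexVelocity}^{\atEquilibrium}(\bar{\conservedVariable}_{\vectorial{\indexSpace}+\canonicalBasisVector{\indexDirection}})-\distributionFunctionLetter_{\indexVelocity}^{\atEquilibrium}(\bar{\conservedVariable}_{\vectorial{\indexSpace}})| = |\bar{\conservedVariable}_{\vectorial{\indexSpace}+\canonicalBasisVector{\indexDirection}}-\bar{\conservedVariable}_{\vectorial{\indexSpace}}|.
\end{equation*}
Summing gives $\totalVariationVectorial{\distributionFunctionsAsFunctionSpace{0}}=\totalVariation{\conservedVariableDiscreteAsFunctionSpace{0}}$, and the latter is $\leq\totalVariation{\conservedVariable^{\initial}}$ by the classical fact that cell-averaging does not increase the total variation of a $\boundedVariationSpace$ function, closing the chain.

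Finally, \eqref{eq:totalVariationConservedControlledByVectorial} is immediate: since $\conservedVariableDiscrete_{\vectorial{\indexSpace}}^{\indexTime}=\sum_{\indexVelocity}\distributionFunction_{\indexVelocity,\vectorial{\indexSpace}}^{\indexTime}$, the triangle inequality applied inside each neighbour difference $|\sum_{\indexVelocity}(\distributionFunction_{\indexVelocity,\vectorial{\indexSpace}+\canonicalBasisVector{\indexDirection}}^{\indexTime}-\distributionFunction_{\indexVelocity,\vectorial{\indexSpace}}^{\indexTime})|\leq\sum_{\indexVelocity}|\distributionFunction_{\indexVelocity,\vectorial{\indexSpace}+\canonicalBasisVector{\indexDirection}}^{\indexTime}-\distributionFunction_{\indexVelocity,\vectorial{\indexSpace}}^{\indexTime}|$ yields the bound after summation. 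The only genuinely delicate point is the shift-invariance in the first step: one must check carefully that the velocity-dependent grid translations of the streaming phase merely relabel the cells without creating or destroying variation, which is exactly where the integer-velocity assumption \eqref{eq:choiceDiscreteVelocities} is used; everything else reduces to the contractivity already proven and to the triangle inequality.
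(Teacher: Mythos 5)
Your proof is correct and takes essentially the same route as the paper's: re-index the velocity-dependent streaming shifts away (each being a bijection of $\relatives^{\spatialDimensionality}$), apply the $\petitLebesgueSpace{1}$-contractivity of \Cref{prop:contractivityCollision} to neighbouring states (legitimate since \Cref{prop:compactInvariantSets} places them in $\invariantCompactSetDistributions$), use monotonicity of the equilibria together with \eqref{eq:constraintsConsistency} to reduce the initial total variation to that of the cell averages of $\conservedVariable^{\initial}$, and conclude \eqref{eq:totalVariationConservedControlledByVectorial} by the triangle inequality. The only difference is presentational: you separate the stream and collide sub-steps, whereas the paper performs both in a single chain of equalities.
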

\begin{proof}
    We have
    \begin{align*}
        \totalVariationVectorial{\distributionFunctionsAsFunctionSpace{\indexTime + 1}} &= \spaceStep^{\spatialDimensionality-1}\sum_{\vectorial{\indexSpace}\in\relatives^{\spatialDimensionality}}\sum_{\indexDirection=1}^{\spatialDimensionality} \sum_{\indexVelocity=1}^{\numberVelocities}|\distributionFunction_{\indexVelocity, \vectorial{\indexSpace}+\canonicalBasisVector{\indexDirection}}^{\indexTime +1} - \distributionFunction_{\indexVelocity, \vectorial{\indexSpace}}^{\indexTime + 1}| \\
        &= \spaceStep^{\spatialDimensionality-1}\sum_{\vectorial{\indexSpace}\in\relatives^{\spatialDimensionality}}\sum_{\indexDirection=1}^{\spatialDimensionality} \sum_{\indexVelocity=1}^{\numberVelocities}| \collisionOperator_{\indexVelocity}(\distributionFunction_{1, \vectorial{\indexSpace}+\canonicalBasisVector{\indexDirection} - \vectorial{\discreteVelocityLetter}_{\indexVelocity}/\latticeVelocity}^{\indexTime}, \dots)  - \collisionOperator_{\indexVelocity}(\distributionFunction_{1, \vectorial{\indexSpace}- \vectorial{\discreteVelocityLetter}_{\indexVelocity}/\latticeVelocity}^{\indexTime}, \dots)| \\
        &=  \spaceStep^{\spatialDimensionality-1}\sum_{\vectorial{\indexSpace}\in\relatives^{\spatialDimensionality}}\sum_{\indexDirection=1}^{\spatialDimensionality} \sum_{\indexVelocity=1}^{\numberVelocities}| \collisionOperator_{\indexVelocity}(\distributionFunction_{1, \vectorial{\indexSpace}+\canonicalBasisVector{\indexDirection} }^{\indexTime}, \dots, \distributionFunction_{\numberVelocities, \vectorial{\indexSpace}+\canonicalBasisVector{\indexDirection} }^{\indexTime})  - \collisionOperator_{\indexVelocity}(\distributionFunction_{1, \vectorial{\indexSpace}}^{\indexTime}, \dots, \distributionFunction_{\numberVelocities, \vectorial{\indexSpace}}^{\indexTime})|.
    \end{align*}
    As $(\distributionFunction_{1, \vectorial{\indexSpace}+\canonicalBasisVector{\indexDirection} }^{\indexTime}, \dots, \distributionFunction_{\numberVelocities, \vectorial{\indexSpace}+\canonicalBasisVector{\indexDirection} }^{\indexTime}), (\distributionFunction_{1, \vectorial{\indexSpace}}^{\indexTime}, \dots, \distributionFunction_{\numberVelocities, \vectorial{\indexSpace}}^{\indexTime})\in\invariantCompactSetDistributions$, by \Cref{prop:contractivityCollision}:
    \begin{equation*}
        \totalVariationVectorial{\distributionFunctionsAsFunctionSpace{\indexTime + 1}}\leq \spaceStep^{\spatialDimensionality-1}\sum_{\vectorial{\indexSpace}\in\relatives^{\spatialDimensionality}}\sum_{\indexDirection=1}^{\spatialDimensionality} \sum_{\indexVelocity=1}^{\numberVelocities}| \distributionFunction_{\indexVelocity, \vectorial{\indexSpace}+\canonicalBasisVector{\indexDirection} }^{\indexTime} - \distributionFunction_{\indexVelocity, \vectorial{\indexSpace}}^{\indexTime}| = \totalVariationVectorial{\distributionFunctionsAsFunctionSpace{\indexTime}} \leq \dots \leq \totalVariationVectorial{\distributionFunctionsAsFunctionSpace{0}}.
    \end{equation*}
    Then, by monotonicity of the equilibria and \eqref{eq:constraintsConsistency}, we have 
    \begin{align*}
        \totalVariationVectorial{\distributionFunctionsAsFunctionSpace{0}} &= \spaceStep^{\spatialDimensionality-1}\sum_{\vectorial{\indexSpace}\in\relatives^{\spatialDimensionality}}\sum_{\indexDirection=1}^{\spatialDimensionality} \sum_{\indexVelocity=1}^{\numberVelocities}\Bigl |\distributionFunctionLetter_{\indexVelocity}^{\atEquilibrium} \Bigl (\dashint_{\cell{\vectorial{\indexSpace}+\canonicalBasisVector{\indexDirection}}}\conservedVariable^{\initial} (\vectorial{\spaceVariable})\differential\vectorial{\spaceVariable}\Bigr )- \distributionFunctionLetter_{\indexVelocity}^{\atEquilibrium} \Bigl (\dashint_{\cell{\vectorial{\indexSpace}}}\conservedVariable^{\initial} (\vectorial{\spaceVariable})\differential\vectorial{\spaceVariable}\Bigr )\Bigr |\\
        &= \spaceStep^{\spatialDimensionality-1}\sum_{\vectorial{\indexSpace}\in\relatives^{\spatialDimensionality}} \sum_{\indexDirection=1}^{\spatialDimensionality}\Bigl |  \dashint_{\cell{\vectorial{\indexSpace}+\canonicalBasisVector{\indexDirection}}}\conservedVariable^{\initial} (\vectorial{\spaceVariable})\differential\vectorial{\spaceVariable} - \dashint_{\cell{\vectorial{\indexSpace}}}\conservedVariable^{\initial} (\vectorial{\spaceVariable})\differential\vectorial{\spaceVariable} \Bigr | \leq \totalVariation{\conservedVariable^{\initial}}.
    \end{align*}
    To finish, \eqref{eq:totalVariationConservedControlledByVectorial} is a straightforward consequence of the triangle inequality.
\end{proof}

\subsubsection{Convergence to the equilibrium}

We now provide estimates ensuring that at each time-step, the discrete solution remains within $\bigO{\spaceStep}$ to the equilibrium.
\begin{proposition}[Closeness to the equilibrium]\label{prop:convergenceEquilibrium}
    Let the conditions by \Cref{prop:monotonicityConditions} be satisfied.
    Then, there exists $C>0$ such that, for all $\indexTime\in\naturals$
    \begin{equation}\label{eq:convergenceEquilibrium}
        \lVert \distributionFunctionsAsFunctionSpace{\indexTime} - \vectorial{\distributionFunctionLetter}^{\atEquilibrium}(\conservedVariableDiscreteAsFunctionSpace{\indexTime})\rVert_{\lebesgueSpace{1}} \leq \frac{C}{1-\max(|1-\relaxationParameterSymmetric|, |1-\relaxationParameterAntiSymmetric|)}  \spaceStep\totalVariation{\conservedVariable^{\initial}}.
    \end{equation}
\end{proposition}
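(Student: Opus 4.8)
The plan is to track the non-equilibrium defect $\discrete{d}_{\indexVelocity, \vectorial{\indexSpace}}^{\indexTime} \definitionEquality \distributionFunction_{\indexVelocity, \vectorial{\indexSpace}}^{\indexTime} - \distributionFunctionLetter_{\indexVelocity}^{\atEquilibrium}(\conservedVariableDiscrete_{\vectorial{\indexSpace}}^{\indexTime})$, whose $\lebesgueSpace{1}$-norm $D^{\indexTime} \definitionEquality \spaceStep^{\spatialDimensionality}\sum_{\vectorial{\indexSpace}}\sum_{\indexVelocity}|\discrete{d}_{\indexVelocity, \vectorial{\indexSpace}}^{\indexTime}|$ is exactly the quantity to bound, and to derive for it an affine geometric recursion $D^{\indexTime+1} \leq \mu D^{\indexTime} + C\spaceStep\totalVariation{\conservedVariable^{\initial}}$ with contraction factor $\mu \definitionEquality \max(|1-\relaxationParameterSymmetric|, |1-\relaxationParameterAntiSymmetric|)$. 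First I would note that initialization at equilibrium \eqref{eq:initializationEquilibrium}, together with $\sum_{\indexVelocity}\distributionFunctionLetter_{\indexVelocity}^{\atEquilibrium}(\conservedVariableDiscrete) = \conservedVariableDiscrete$, gives $\distributionFunction_{\indexVelocity, \vectorial{\indexSpace}}^{0} = \distributionFunctionLetter_{\indexVelocity}^{\atEquilibrium}(\conservedVariableDiscrete_{\vectorial{\indexSpace}}^{0})$, \idEst{} $D^0 = 0$, providing the base case.

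The key algebraic fact is that the relaxation contracts the defect by the factor $\mu$. Because the conserved moment is invariant under relaxation, $\distributionFunctionLetter_{\indexVelocity}^{\atEquilibrium}$ evaluated at the collided moment coincides with its value before collision; hence the collided defect is the purely linear, decoupled contraction of the symmetric and anti-symmetric parts. Writing $S_{\indexLink} \definitionEquality \symmetricDistributionFunction_{2\indexLink} - \symmetricDistributionFunctionLetter_{2\indexLink}^{\atEquilibrium}$ and $A_{\indexLink}\definitionEquality\antiSymmetricDistributionFunction_{2\indexLink}-\antiSymmetricDistributionFunctionLetter_{2\indexLink}^{\atEquilibrium}$, the two defects of the $\indexLink$-th link after collision are $(1-\relaxationParameterSymmetric)S_{\indexLink}\pm(1-\relaxationParameterAntiSymmetric)A_{\indexLink}$, so the elementary identity $|p+q|+|p-q| = 2\max(|p|,|q|)$ yields $|\discrete{d}_{2\indexLink}^{\collided}| + |\discrete{d}_{2\indexLink+1}^{\collided}| \leq \mu(|\discrete{d}_{2\indexLink}| + |\discrete{d}_{2\indexLink+1}|)$; the zero velocity gives $|\discrete{d}_1^{\collided}| = |1-\relaxationParameterSymmetric|\,|\discrete{d}_1|$. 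Summing, the relaxation is a pointwise $\petitLebesgueSpace{1}$-contraction of the defect with factor $\mu$. Here \Cref{prop:neverEqualToTwo} is essential: it guarantees $\relaxationParameterSymmetric, \relaxationParameterAntiSymmetric \neq 2$, hence $\mu < 1$.

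Next I would combine relaxation and transport. Since the equilibrium is a fixed point of $\vectorial{\collisionOperator}$, the defect at the next step reads, with $\vectorial{y}\definitionEquality\vectorial{\indexSpace}-\vectorial{\discreteVelocityLetter}_{\indexVelocity}/\latticeVelocity$,
\begin{equation*}
    \discrete{d}_{\indexVelocity, \vectorial{\indexSpace}}^{\indexTime+1} = \bigl(\distributionFunctionLetter_{\indexVelocity}^{\atEquilibrium}(\conservedVariableDiscrete_{\vectorial{y}}^{\indexTime}) - \distributionFunctionLetter_{\indexVelocity}^{\atEquilibrium}(\conservedVariableDiscrete_{\vectorial{\indexSpace}}^{\indexTime+1})\bigr) + \discrete{d}_{\indexVelocity, \vectorial{y}}^{\collided, \indexTime}.
\end{equation*}
The collided defect, being a shift, contributes $\mu D^{\indexTime}$ after summing over the grid, each velocity being reindexed by a lattice bijection before applying the pointwise contraction above. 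The remaining ``source'' term $\spaceStep^{\spatialDimensionality}\sum_{\vectorial{\indexSpace}}\sum_{\indexVelocity}|\distributionFunctionLetter_{\indexVelocity}^{\atEquilibrium}(\conservedVariableDiscrete_{\vectorial{y}}^{\indexTime}) - \distributionFunctionLetter_{\indexVelocity}^{\atEquilibrium}(\conservedVariableDiscrete_{\vectorial{\indexSpace}}^{\indexTime+1})|$ is, I expect, the main obstacle. I would control it with the uniform Lipschitz bound on the equilibria ($\sup|\differential\distributionFunctionLetter_{\indexVelocity}^{\atEquilibrium}/\differential\conservedVariableDiscrete| \leq 1$, already established), splitting $\conservedVariableDiscrete_{\vectorial{y}}^{\indexTime} - \conservedVariableDiscrete_{\vectorial{\indexSpace}}^{\indexTime+1}$ into a purely spatial part $\conservedVariableDiscrete_{\vectorial{y}}^{\indexTime}-\conservedVariableDiscrete_{\vectorial{\indexSpace}}^{\indexTime}$, bounded via the total variation estimate $\totalVariation{\conservedVariableDiscreteAsFunctionSpace{\indexTime}}\leq\totalVariation{\conservedVariable^{\initial}}$ (a shift by finitely many cells telescopes into that many one-cell increments), and a purely temporal part $\conservedVariableDiscrete_{\vectorial{\indexSpace}}^{\indexTime}-\conservedVariableDiscrete_{\vectorial{\indexSpace}}^{\indexTime+1}$, bounded via the time-equicontinuity estimate \eqref{eq:timeEquicontinuity1} after passing from the distribution functions to the conserved moment $\conservedVariableDiscrete = \sum_{\indexVelocity}\distributionFunction_{\indexVelocity}$ by the triangle inequality. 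Both bounds scale as $\bigO{\spaceStep^{1-\spatialDimensionality}}\totalVariation{\conservedVariable^{\initial}}$, so after multiplication by $\spaceStep^{\spatialDimensionality}$ the source is $\leq C\spaceStep\totalVariation{\conservedVariable^{\initial}}$ with $C$ independent of $\spaceStep$ and $\indexTime$.

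Finally, I would solve the recursion. With $D^0 = 0$ and $D^{\indexTime+1} \leq \mu D^{\indexTime} + C\spaceStep\totalVariation{\conservedVariable^{\initial}}$, summing the geometric series gives $D^{\indexTime} \leq \frac{C}{1-\mu}\spaceStep\totalVariation{\conservedVariable^{\initial}}$, which is precisely \eqref{eq:convergenceEquilibrium}. The delicate points are thus ensuring $\mu < 1$ through \Cref{prop:neverEqualToTwo}, and the uniform-in-$\indexTime$ control of the source, which crucially relies on the space and time estimates already established holding for every $\indexTime$.
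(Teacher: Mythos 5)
Your proposal is correct and takes essentially the same route as the paper's proof: the same defect quantity and recursion $\delta^{\atEquilibrium, \indexTime+1} \leq \max(|1-\relaxationParameterSymmetric|, |1-\relaxationParameterAntiSymmetric|)\,\delta^{\atEquilibrium, \indexTime} + C\spaceStep\totalVariation{\conservedVariable^{\initial}}$, the same triangle-inequality split into a relaxation contraction plus an equilibrium-difference source term controlled by the total-variation and time-equicontinuity estimates, the same appeal to \Cref{prop:neverEqualToTwo} to get a contraction factor strictly below one, and the same geometric summation from $\delta^{\atEquilibrium, 0}=0$. Your derivation of the contraction factor via symmetric/anti-symmetric defects and the identity $|p+q|+|p-q|=2\max(|p|,|q|)$ is merely a repackaging of the paper's coefficient computation $\bigl|1-\tfrac{1}{2}(\relaxationParameterSymmetric+\relaxationParameterAntiSymmetric)\bigr|+\bigl|\tfrac{1}{2}(\relaxationParameterSymmetric-\relaxationParameterAntiSymmetric)\bigr|=\max(|1-\relaxationParameterSymmetric|,|1-\relaxationParameterAntiSymmetric|)$.
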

\begin{proof}
    Let us introduce the shorthand $\delta^{\atEquilibrium, \indexTime}\definitionEquality \lVert \distributionFunctionsAsFunctionSpace{\indexTime} - \vectorial{\distributionFunctionLetter}^{\atEquilibrium}(\conservedVariableDiscreteAsFunctionSpace{\indexTime})\rVert_{\lebesgueSpace{1}}$. We have 
    \begin{align*}
        \delta^{\atEquilibrium, \indexTime + 1} &= \spaceStep^{\spatialDimensionality} \sum_{\vectorial{\indexSpace}\in\relatives^{\spatialDimensionality}} \sum_{\indexVelocity=1}^{\numberVelocities} |\collisionOperator_{\indexVelocity}(\distributionFunction_{1, \vectorial{\indexSpace}-\vectorial{\discreteVelocityLetter}_{\indexVelocity}/\latticeVelocity}^{\indexTime}, \dots, \distributionFunction_{\numberVelocities, \vectorial{\indexSpace}-\vectorial{\discreteVelocityLetter}_{\indexVelocity}/\latticeVelocity}^{\indexTime}) - \distributionFunctionLetter^{\atEquilibrium}_{\indexVelocity}(\conservedVariableDiscrete_{\vectorial{\indexSpace}}^{\indexTime + 1})|\\
        &\leq \spaceStep^{\spatialDimensionality} \sum_{\vectorial{\indexSpace}\in\relatives^{\spatialDimensionality}} \sum_{\indexVelocity=1}^{\numberVelocities} (|\collisionOperator_{\indexVelocity}(\distributionFunction_{1, \vectorial{\indexSpace}-\vectorial{\discreteVelocityLetter}_{\indexVelocity}/\latticeVelocity}^{\indexTime}, \dots) - \distributionFunctionLetter^{\atEquilibrium}_{\indexVelocity}(\conservedVariableDiscrete_{\vectorial{\indexSpace}-\vectorial{\discreteVelocityLetter}_{\indexVelocity}/\latticeVelocity}^{\indexTime})| + | \distributionFunctionLetter^{\atEquilibrium}_{\indexVelocity}(\conservedVariableDiscrete_{\vectorial{\indexSpace}}^{\indexTime + 1}) - \distributionFunctionLetter^{\atEquilibrium}_{\indexVelocity}(\conservedVariableDiscrete_{\vectorial{\indexSpace}-\vectorial{\discreteVelocityLetter}_{\indexVelocity}/\latticeVelocity}^{\indexTime})|)\\
        &\leq \spaceStep^{\spatialDimensionality} \sum_{\vectorial{\indexSpace}\in\relatives^{\spatialDimensionality}} \sum_{\indexVelocity=1}^{\numberVelocities} |\collisionOperator_{\indexVelocity}(\distributionFunction_{1, \vectorial{\indexSpace}}^{\indexTime}, \dots, \distributionFunction_{\numberVelocities, \vectorial{\indexSpace}}^{\indexTime}) - \distributionFunctionLetter^{\atEquilibrium}_{\indexVelocity}(\conservedVariableDiscrete_{\vectorial{\indexSpace}}^{\indexTime})| + \spaceStep^{\spatialDimensionality} \sum_{\vectorial{\indexSpace}\in\relatives^{\spatialDimensionality}} \sum_{\indexVelocity=1}^{\numberVelocities}  |\distributionFunctionLetter^{\atEquilibrium}_{\indexVelocity}(\conservedVariableDiscrete_{\vectorial{\indexSpace}-\vectorial{\discreteVelocityLetter}_{\indexVelocity}/\latticeVelocity}^{\indexTime}) - \distributionFunctionLetter^{\atEquilibrium}_{\indexVelocity}(\conservedVariableDiscrete_{\vectorial{\indexSpace}}^{\indexTime + 1})|.
    \end{align*}
    For the first term 
    \begin{multline*}
        \sum_{\indexVelocity=1}^{\numberVelocities} |\collisionOperator_{\indexVelocity}(\distributionFunction_{1, \vectorial{\indexSpace}}^{\indexTime}, \dots, \distributionFunction_{\numberVelocities, \vectorial{\indexSpace}}^{\indexTime}) - \distributionFunctionLetter^{\atEquilibrium}_{\indexVelocity}(\conservedVariableDiscrete_{\vectorial{\indexSpace}}^{\indexTime})| \leq |1-\relaxationParameterSymmetric||\distributionFunction_{1, \vectorial{\indexSpace}}^{\indexTime}-\distributionFunctionLetter^{\atEquilibrium}_{1}(\conservedVariableDiscrete_{\vectorial{\indexSpace}}^{\indexTime})| \\
        + \sum_{\indexVelocity = 2}^{\numberVelocities}\Bigl ( \Bigl |1 - \tfrac{1}{2}(\relaxationParameterSymmetric+\relaxationParameterAntiSymmetric) \Bigr | + \Bigl |\tfrac{1}{2}(\relaxationParameterSymmetric-\relaxationParameterAntiSymmetric) \Bigr |\Bigr ) |\distributionFunction_{\indexVelocity, \vectorial{\indexSpace}}^{\indexTime} - \distributionFunctionLetter^{\atEquilibrium}_{\indexVelocity}(\conservedVariableDiscrete_{\vectorial{\indexSpace}}^{\indexTime})|.
    \end{multline*}
    One easily sees that $ |1 - \tfrac{1}{2}(\relaxationParameterSymmetric+\relaxationParameterAntiSymmetric)  | + |\tfrac{1}{2}(\relaxationParameterSymmetric-\relaxationParameterAntiSymmetric)  | = \max(|1-\relaxationParameterSymmetric|, |1-\relaxationParameterAntiSymmetric|)$, thus
    \begin{equation*}
        \sum_{\indexVelocity=1}^{\numberVelocities} |\collisionOperator_{\indexVelocity}(\distributionFunction_{1, \vectorial{\indexSpace}}^{\indexTime}, \dots, \distributionFunction_{\numberVelocities, \vectorial{\indexSpace}}^{\indexTime}) - \distributionFunctionLetter^{\atEquilibrium}_{\indexVelocity}(\conservedVariableDiscrete_{\vectorial{\indexSpace}}^{\indexTime})|\leq \max(|1-\relaxationParameterSymmetric|, |1-\relaxationParameterAntiSymmetric|)  \sum_{\indexVelocity = 1}^{\numberVelocities} |\distributionFunction_{\indexVelocity, \vectorial{\indexSpace}}^{\indexTime} - \distributionFunctionLetter^{\atEquilibrium}_{\indexVelocity}(\conservedVariableDiscrete_{\vectorial{\indexSpace}}^{\indexTime})|.
    \end{equation*}
    Therefore, we have shown that $\delta^{\atEquilibrium, \indexTime + 1} \leq \max(|1-\relaxationParameterSymmetric|, |1-\relaxationParameterAntiSymmetric|) \delta^{\atEquilibrium, \indexTime}  + C \spaceStep\totalVariation{\conservedVariable^{\initial}}$.
    Using \Cref{prop:neverEqualToTwo}, we have that $0\leq\max(|1-\relaxationParameterSymmetric|, |1-\relaxationParameterAntiSymmetric|)<1$, thus---considering that $\delta^{\atEquilibrium, 0} = 0$ yields 
    \begin{align}
        \delta^{\atEquilibrium, \indexTime} &\leq C \spaceStep\totalVariation{\conservedVariable^{\initial}} \frac{\max(|1-\relaxationParameterSymmetric|, |1-\relaxationParameterAntiSymmetric|)^{\indexTime}-1}{\max(|1-\relaxationParameterSymmetric|, |1-\relaxationParameterAntiSymmetric|)-1} \label{eq:geometricConvergenceEquilibrium}\\
        &\leq   \frac{C}{1-\max(|1-\relaxationParameterSymmetric|, |1-\relaxationParameterAntiSymmetric|)}  \spaceStep\totalVariation{\conservedVariable^{\initial}}.\nonumber
    \end{align}
\end{proof}

\begin{remark}[On the way of converging to the equilibrium]
    In practice, see \Cref{sec:D1Q3convergenceEquilibrium}, the numerical solution converges to the neighborhood of the equilibrium in a geometric fashion. This is not surprising taking \eqref{eq:geometricConvergenceEquilibrium} into account, since its right-hand side represents a geometrically damped sequence in $\indexTime$, and demonstrates that the bounds \eqref{eq:convergenceEquilibrium} and \eqref{eq:geometricConvergenceEquilibrium} are sharp.
\end{remark}

\subsubsection{Convergence}

\begin{theorem}[Convergence to a weak solution]\label{thm:convergenceWeakSolution}
    Let the conditions by \Cref{prop:monotonicityConditions} be satisfied.
    Let $(\spaceStep_{p})_{p\in\naturals} $ be a sequence of non-negative space-steps such that $\lim_{p\to+\infty}\spaceStep_p = 0$.
    Then, there exists a subsequence of space-steps, also denoted $(\spaceStep_{p})_{p\in\naturals} $ for simplicity, and a function $\vectorial{\limitDistributionFunction}$ such that $\vectorial{\limitDistributionFunction}(\timeVariable, \cdot)\in\lebesgueSpace{1}(\reals^{\spatialDimensionality})$ and $\vectorial{\limitDistributionFunction}(\timeVariable, \vectorial{\spaceVariable})\in\invariantCompactSetDistributions$ a.e. in $\vectorial{\spaceVariable}$, for $\timeVariable\geq 0$ such that, for $\finalTime>0$ 
    \begin{equation}\label{eq:limit}
        \lim_{p\to +\infty} \lVert\distributionFunctionsAsFunctionWithStep{\discreteMark_p} - \vectorial{\limitDistributionFunction} \rVert_{\lebesgueTime{\infty}([0, \finalTime])\lebesgueInSpace{1}} = 0,
    \end{equation}
    and, setting $\limitConservedMoment \definitionEquality \sum_{\indexVelocity=1}^{\indexVelocity=\numberVelocities}\limitDistributionFunction_{\indexVelocity}$, such that $\limitConservedMoment(\timeVariable, \vectorial{\spaceVariable})\in[-\maximumInitialDatum, \maximumInitialDatum]$ a.e. in $\vectorial{\spaceVariable}$, with $\lim_{p\to +\infty} \lVert\conservedVariableDiscreteAsFunctionWithStep{\discreteMark_p} -\limitConservedMoment \rVert_{\lebesgueTime{\infty}([0, \finalTime])\lebesgueInSpace{1}} = 0$.
    Moreover, the limit distribution functions $\vectorial{\limitDistributionFunction}$ are at equilibrium, namely $\vectorial{\limitDistributionFunction}(\timeVariable, \vectorial{\spaceVariable}) = \vectorial{\distributionFunctionLetter}^{\atEquilibrium}(\limitConservedMoment(\timeVariable, \vectorial{\spaceVariable}))$ a.e. in $\vectorial{\spaceVariable}$.
    Finally, $\limitConservedMoment$ is a weak solution of \eqref{eq:conservationLaw}.
\end{theorem}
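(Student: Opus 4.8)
The plan is to run the classical compactness (Lax--Wendroff) argument for monotone kinetic schemes, exactly as in \cite{aregba2000discrete, aregba2024convergence}, since the four required \emph{a priori} estimates are now in hand.

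\emph{Compactness.} From \Cref{prop:compactInvariantSets} the family $\distributionFunctionsAsFunctionSpace{\indexTime}$ is uniformly bounded in $\lebesgueSpace{\infty}$. Because $\flux_{\indexDirection}(0)=0$ forces $\distributionFunctionLetter_{\indexVelocity}^{\atEquilibrium}(0)=0$, the null state is stationary, so the $\lebesgueSpace{1}$-contractivity proved above gives $\lVert\distributionFunctionsAsFunctionSpace{\indexTime}\rVert_{\lebesgueSpace{1}}\leq\lVert\conservedVariable^{\initial}\rVert_{\lebesgueSpace{1}}$ uniformly; together with the uniform bound on $\totalVariationVectorial{\distributionFunctionsAsFunctionSpace{\indexTime}}$, the $\lebesgueSpace{1}$ and $\boundedVariationSpace$ controls supply both equi-integrability and tightness, so for each fixed time $\distributionFunctionsAsFunctionSpace{\indexTime}$ is relatively compact in $\lebesgueSpace{1}(\reals^{\spatialDimensionality})$ (Helly / Kolmogorov--Riesz). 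I would extract a convergent subsequence at every rational time, diagonalise, then use the equicontinuity-in-time corollary to pass to all $\timeVariable\in[0,\finalTime]$ and upgrade the convergence to the $\lebesgueTime{\infty}([0,\finalTime])\lebesgueInSpace{1}$ mode of \eqref{eq:limit}, producing $\vectorial{\limitDistributionFunction}$. As $\invariantCompactSetDistributions$ is closed and $\distributionFunctionsAsFunctionSpace{\indexTime}(\vectorial{\spaceVariable})\in\invariantCompactSetDistributions$ a.e., the limit satisfies $\vectorial{\limitDistributionFunction}(\timeVariable,\vectorial{\spaceVariable})\in\invariantCompactSetDistributions$; applying the linear functional $(1,\dots,1)\cdot$ gives $\conservedVariableDiscreteAsFunction\to\limitConservedMoment=\sum_{\indexVelocity=1}^{\numberVelocities}\limitDistributionFunction_{\indexVelocity}$ in the same topology, with $\limitConservedMoment\in[-\maximumInitialDatum,\maximumInitialDatum]$.

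\emph{Equilibrium.} Passing \Cref{prop:convergenceEquilibrium} to the limit along the subsequence, the right-hand side $C\spaceStep\totalVariation{\conservedVariable^{\initial}}\to0$ forces $\vectorial{\limitDistributionFunction}=\vectorial{\distributionFunctionLetter}^{\atEquilibrium}(\limitConservedMoment)$ a.e.; here the Lipschitz continuity of each $\distributionFunctionLetter_{\indexVelocity}^{\atEquilibrium}$ on $[-\maximumInitialDatum,\maximumInitialDatum]$ (\Cref{prop:monotonicityEquilibria}) guarantees $\vectorial{\distributionFunctionLetter}^{\atEquilibrium}(\conservedVariableDiscreteAsFunction)\to\vectorial{\distributionFunctionLetter}^{\atEquilibrium}(\limitConservedMoment)$ in $\lebesgueSpace{1}$. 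Since collision preserves the moment, \Cref{prop:contractivityCollision} gives $\lVert\vectorial{\collisionOperator}(\distributionFunctionsAsFunctionSpace{\indexTime})-\vectorial{\distributionFunctionLetter}^{\atEquilibrium}(\conservedVariableDiscreteAsFunctionSpace{\indexTime})\rVert_{\lebesgueSpace{1}}\leq\lVert\distributionFunctionsAsFunctionSpace{\indexTime}-\vectorial{\distributionFunctionLetter}^{\atEquilibrium}(\conservedVariableDiscreteAsFunctionSpace{\indexTime})\rVert_{\lebesgueSpace{1}}$, so the post-collision, streamed distributions converge to the same equilibrium $\vectorial{\distributionFunctionLetter}^{\atEquilibrium}(\limitConservedMoment)$ as well --- the fact I will need to reconstruct the flux.

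\emph{Lax--Wendroff.} Summing \eqref{eq:transport}--\eqref{eq:collision} over $\indexVelocity$ and using the collision invariance $\sum_{\indexVelocity=1}^{\numberVelocities}\collisionOperator_{\indexVelocity}=\sum_{\indexVelocity=1}^{\numberVelocities}\distributionFunction_{\indexVelocity}$ (immediate from \eqref{eq:constraintsConsistency3}), the scheme takes the conservative form
\begin{equation*}
\conservedVariableDiscrete_{\vectorial{\indexSpace}}^{\indexTime+1}-\conservedVariableDiscrete_{\vectorial{\indexSpace}}^{\indexTime}=\sum_{\indexLink=1}^{\numberLinks}\Bigl[(\distributionFunction_{2\indexLink,\vectorial{\indexSpace}-\vectorial{\discreteVelocityLetter}_{2\indexLink}/\latticeVelocity}^{\indexTime,\collided}-\distributionFunction_{2\indexLink,\vectorial{\indexSpace}}^{\indexTime,\collided})+(\distributionFunction_{2\indexLink+1,\vectorial{\indexSpace}+\vectorial{\discreteVelocityLetter}_{2\indexLink}/\latticeVelocity}^{\indexTime,\collided}-\distributionFunction_{2\indexLink+1,\vectorial{\indexSpace}}^{\indexTime,\collided})\Bigr],
\end{equation*}
the zero-velocity term dropping out. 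Testing against $\testFunction\in\continuousSpace{1}_{c}([0,\finalTime)\times\reals^{\spatialDimensionality})$ evaluated at the nodes and summing by parts (Abel summation) in time and space, the time difference yields in the limit $-\iint\limitConservedMoment\,\partial_{\timeVariable}\testFunction-\int\conservedVariable^{\initial}\testFunction(0,\cdot)$, the initial term because $\conservedVariableDiscrete_{\vectorial{\indexSpace}}^{0}=\dashint_{\cell{\vectorial{\indexSpace}}}\conservedVariable^{\initial}(\vectorial{\spaceVariable})\differential\vectorial{\spaceVariable}\to\conservedVariable^{\initial}$ in $\lebesgueSpace{1}$. For each link the shift obeys $\testFunction(\timeGridPoint{\indexTime},\spaceGridPoint{\vectorial{\indexSpace}\pm\vectorial{\discreteVelocityLetter}_{2\indexLink}/\latticeVelocity})-\testFunction(\timeGridPoint{\indexTime},\spaceGridPoint{\vectorial{\indexSpace}})=\pm\timeStep\,\vectorial{\discreteVelocityLetter}_{2\indexLink}\cdot\nabla_{\vectorial{\spaceVariable}}\testFunction+\bigO{\spaceStep^2}$, and the strong convergence $\distributionFunction_{2\indexLink}^{\indexTime,\collided}-\distributionFunction_{2\indexLink+1}^{\indexTime,\collided}\to2\antiSymmetricDistributionFunctionLetter_{2\indexLink}^{\atEquilibrium}(\limitConservedMoment)$ makes the link contribution tend to $\iint2\sum_{\indexLink=1}^{\numberLinks}\antiSymmetricDistributionFunctionLetter_{2\indexLink}^{\atEquilibrium}(\limitConservedMoment)\,\vectorial{\discreteVelocityLetter}_{2\indexLink}\cdot\nabla_{\vectorial{\spaceVariable}}\testFunction$. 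The consistency constraint $2\sum_{\indexLink=1}^{\numberLinks}\discreteVelocityLetter_{2\indexLink,\indexDirection}\antiSymmetricDistributionFunctionLetter_{2\indexLink}^{\atEquilibrium}(\limitConservedMoment)=\flux_{\indexDirection}(\limitConservedMoment)$ of \eqref{eq:constraintsConsistency} turns this into $\iint\sum_{\indexDirection=1}^{\spatialDimensionality}\flux_{\indexDirection}(\limitConservedMoment)\,\partial_{\spaceVariable_{\indexDirection}}\testFunction$, so that equating the two limits gives $\iint\bigl(\limitConservedMoment\,\partial_{\timeVariable}\testFunction+\sum_{\indexDirection=1}^{\spatialDimensionality}\flux_{\indexDirection}(\limitConservedMoment)\,\partial_{\spaceVariable_{\indexDirection}}\testFunction\bigr)+\int\conservedVariable^{\initial}\testFunction(0,\cdot)=0$, the sought weak formulation.

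The main obstacle is precisely this last passage to the limit: one must control the Abel-summation remainders (the $\bigO{\spaceStep^2}$ Taylor terms accumulate over $\bigO{\spaceStep^{-\spatialDimensionality-1}}$ space-time nodes, each weighted by $\spaceStep^{\spatialDimensionality}$, hence contribute $\bigO{\spaceStep}\to0$ thanks to $\timeStep=\spaceStep/\latticeVelocity$ and the uniform $\lebesgueSpace{\infty}$ bound) and, crucially, exploit the \emph{strong} $\lebesgueTime{\infty}\lebesgueInSpace{1}$ convergence of the post-collision distributions to equilibrium in order to pass the non-linearities $\antiSymmetricDistributionFunctionLetter_{2\indexLink}^{\atEquilibrium}$ (equivalently $\flux_{\indexDirection}$) through the limit --- weak convergence alone would not suffice. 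The compactness/tightness on the unbounded domain $\reals^{\spatialDimensionality}$ is the secondary delicate point, handled by the joint $\lebesgueSpace{1}$ and $\boundedVariationSpace$ bounds.
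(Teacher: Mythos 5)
Your proposal is correct and follows essentially the same route as the paper: compactness of $\conservedVariableDiscreteAsFunction$ and $\distributionFunctionsAsFunction$ from the a priori estimates (which the paper delegates to the machinery of \cite{crandall1980monotone}), identification of the limit as an equilibrium by combining \Cref{prop:convergenceEquilibrium} with a triangle inequality, and a Lax--Wendroff passage to the limit using the consistency constraints \eqref{eq:constraintsConsistency}. The only difference is organizational: you telescope the scheme into a conservative form on the conserved moment with post-collision distributions as fluxes (correctly deducing their convergence to equilibrium from \Cref{prop:contractivityCollision}), whereas the paper keeps the flux term $F$ with the collision operators and shifted test functions, deferring its treatment to the supplementary material---both reduce to the same Taylor expansion of the test function plus the consistency identities.
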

\begin{proof}
    We first extract a converging subsequence, since all the needed properties used in \cite{crandall1980monotone} are proved, which gives $\lim_{p\to +\infty} \lVert\distributionFunctionsAsFunctionWithStep{\discreteMark_p} - \vectorial{\limitDistributionFunction} \rVert_{\lebesgueTime{\infty}([0, \finalTime])\lebesgueInSpace{1}} = 0$.
    Upon extracting again, we know that $\lebesgueInSpace{1}$-convergence implies point-wise convergence almost everywhere, which entails that $\vectorial{\limitDistributionFunction}(\timeVariable, \vectorial{\spaceVariable}) \in\invariantCompactSetDistributions$, thus $\limitConservedMoment(\timeVariable, \vectorial{\spaceVariable})\in[-\maximumInitialDatum, \maximumInitialDatum]$, a.e. in $\vectorial{\spaceVariable}$.
    With all norms being $\lebesgueTime{\infty}([0, \finalTime])\lebesgueInSpace{1}$, we have 
    \begin{alignat*}{1}
        \lVert\vectorial{\limitDistributionFunction} - \vectorial{\distributionFunctionLetter}^{\atEquilibrium}(\limitConservedMoment)&\rVert_{\lebesgueTime{\infty}([0, \finalTime])\lebesgueInSpace{1}} \\
        \leq &\lVert\vectorial{\limitDistributionFunction} -\distributionFunctionsAsFunctionWithStep{\discreteMark_{p}} \rVert + \lVert\distributionFunctionsAsFunctionWithStep{\discreteMark_{p}} - \vectorial{\distributionFunctionLetter}^{\atEquilibrium}(\conservedVariableDiscreteAsFunctionWithStep{\discreteMark_{p}})\rVert + \lVert \vectorial{\distributionFunctionLetter}^{\atEquilibrium}(\conservedVariableDiscreteAsFunctionWithStep{\discreteMark_{p}}) - \vectorial{\distributionFunctionLetter}^{\atEquilibrium}(\limitConservedMoment)\rVert\\
        \leq&\lVert\vectorial{\limitDistributionFunction} -\distributionFunctionsAsFunctionWithStep{\discreteMark_{p}} \rVert  + C(1-\max(|1-\relaxationParameterSymmetric|, |1-\relaxationParameterAntiSymmetric|))^{-1}  \spaceStep_{p}\totalVariation{\conservedVariable^{\initial}} + \lVert \conservedVariableDiscreteAsFunctionWithStep{\discreteMark_{p}} - \limitConservedMoment\rVert.
    \end{alignat*}
    The last inequality comes from monotonicity of the equilibria, which can be invoked since $\limitConservedMoment(\timeVariable, \vectorial{\spaceVariable})\in[-\maximumInitialDatum, \maximumInitialDatum]$, a.e. in $\vectorial{\spaceVariable}$.
    Letting $p\to+\infty$, we deduce $\lVert\vectorial{\limitDistributionFunction} - \vectorial{\distributionFunctionLetter}^{\atEquilibrium}(\limitConservedMoment)\rVert_{\lebesgueTime{\infty}([0, \finalTime])\lebesgueInSpace{1}} = 0$, hence $\vectorial{\limitDistributionFunction}(\timeVariable, \vectorial{\spaceVariable}) = \vectorial{\distributionFunctionLetter}^{\atEquilibrium}(\limitConservedMoment(\timeVariable, \vectorial{\spaceVariable}))$ a.e. in $\vectorial{\spaceVariable}$.
    To show that $\limitConservedMoment$ is a weak solution of \eqref{eq:conservationLaw}, the fact that $\spaceStep = \spaceStep_p$, $\timeStep=\spaceStep_p / \latticeVelocity$, and that limits are for $p\to +\infty$ is understood.
        Consider a test function $\testFunction\in C_c^1([0, +\infty)\times\reals^{\spatialDimensionality})$, 
        \begin{equation*}
            \testFunctionDiscrete_{\vectorial{\indexSpace}}^{\indexTime} \definitionEquality \testFunction(\timeGridPoint{\indexTime}, \spaceGridPoint{\vectorial{\indexSpace}}),\qquad \text{and}\qquad \testFunctionWithStep(\timeVariable, \vectorial{\spaceVariable}) \definitionEquality \sum_{\indexTime\in\naturals} \sum_{\vectorial{\indexSpace}\in\relatives^{\spatialDimensionality}} \testFunctionDiscrete_{\vectorial{\indexSpace}}^{\indexTime} \indicatorFunction{[\timeGridPoint{\indexTime}, \timeGridPoint{\indexTime+1})} (\timeVariable)\indicatorFunction{\cell{\vectorial{\indexSpace}}} (\vectorial{\spaceVariable}).
        \end{equation*}
        Summing the schemes \eqref{eq:collision}/\eqref{eq:transport}, \idEst{} $\distributionFunction_{\indexVelocity, \vectorial{\indexSpace}}^{\indexTime+1} = \collisionOperator_{\indexVelocity}(\distributionFunction_{1, \vectorial{\indexSpace}-\vectorial{\discreteVelocityLetter}_{\indexVelocity}/\latticeVelocity}^{\indexTime}, \dots, \distributionFunction_{\numberVelocities, \vectorial{\indexSpace}-\vectorial{\discreteVelocityLetter}_{\indexVelocity}/\latticeVelocity}^{\indexTime})$, over $\indexVelocity\in\integerInterval{1}{\numberVelocities}$, multiplying by the test function and summing in time and space, we obtain 
        \begin{equation*}
            \timeStep\spaceStep^{\spatialDimensionality}\sum_{\indexTime\in\naturals} \sum_{\vectorial{\indexSpace}\in\relatives^{\spatialDimensionality}} \frac{\conservedVariableDiscrete_{\vectorial{\indexSpace}}^{\indexTime + 1} - \conservedVariableDiscrete_{\vectorial{\indexSpace}}^{\indexTime}}{\timeStep} \testFunctionDiscrete_{\vectorial{\indexSpace}}^{\indexTime} 
    = 
    \timeStep\spaceStep^{\spatialDimensionality}\sum_{\indexTime\in\naturals} \sum_{\vectorial{\indexSpace}\in\relatives^{\spatialDimensionality}} \frac{\sum\limits_{\indexVelocity=1}^{\numberVelocities}\collisionOperator_{\indexVelocity}(\distributionFunction_{1, \vectorial{\indexSpace}-\vectorial{\discreteVelocityLetter}_{\indexVelocity}/\latticeVelocity}^{\indexTime}, \dots) - \conservedVariableDiscrete_{\vectorial{\indexSpace}}^{\indexTime}}{\timeStep}\testFunctionDiscrete_{\vectorial{\indexSpace}}^{\indexTime} .
        \end{equation*}
Standard summations-by-parts and the fact that test functions are compactly supported in space give, switching to integrals: 
\begin{multline}\label{eq:discreteIPPIntegrals}
    \overbrace{\int_{\timeStep}^{+\infty}\hspace{-0.5em}\int_{\reals^{\spatialDimensionality}}\conservedVariableDiscreteAsFunction(\timeVariable, \vectorial{\spaceVariable})\frac{\testFunctionWithStep(\timeVariable - \timeStep, \vectorial{\spaceVariable})-\testFunctionWithStep(\timeVariable, \vectorial{\spaceVariable})}{\timeStep}\differential{\vectorial{\spaceVariable}}\differential{\timeVariable}}^{D} - \overbrace{\int_{\reals^{\spatialDimensionality}}\conservedVariableDiscreteAsFunction(0, \vectorial{\spaceVariable})\testFunctionWithStep(0, \vectorial{\spaceVariable})\differential{\vectorial{\spaceVariable}}}^{I}\\
    =\underbrace{\int_{0}^{+\infty}\hspace{-0.5em}\int_{\reals^{\spatialDimensionality}}\frac{1}{\timeStep}\Bigl ( \sum\limits_{\indexVelocity=1}^{\numberVelocities}\collisionOperator_{\indexVelocity}({\distributionFunctionsAsFunction(\timeVariable, \vectorial{\spaceVariable})}) \testFunctionWithStep(\timeVariable, \vectorial{\spaceVariable}+\spaceStep\vectorial{\discreteVelocityLetter}_{\indexVelocity}/\latticeVelocity) - \conservedVariableDiscreteAsFunction(\timeVariable, \vectorial{\spaceVariable})\testFunctionWithStep(\timeVariable, \vectorial{\spaceVariable})\Bigr ) \differential{\vectorial{\spaceVariable}}\differential{\timeVariable}}_{F}.
\end{multline}
    The terms $D$ (\idEst{} ``derivative'') and $I$ (\idEst{} ``initial'') are precisely the same as in the proof of the Lax-Wendroff theorem on \cite[Page 100]{godlewski1991hyperbolic}, where it is shown that 
    \begin{equation*}
        D\to -\int_{0}^{+\infty}\hspace{-0.5em}\int_{\reals^{\spatialDimensionality}}\limitConservedMoment(\timeVariable, \vectorial{\spaceVariable})\partial_{\timeVariable}\testFunction(\timeVariable, \vectorial{\spaceVariable})\differential{\vectorial{\spaceVariable}}\differential{\timeVariable}\qquad \text{and}\qquad I\to \int_{\reals^{\spatialDimensionality}}\conservedVariable^{\initial}(\vectorial{\spaceVariable})\testFunction(0, \vectorial{\spaceVariable})\differential{\vectorial{\spaceVariable}}.
    \end{equation*}
    We are left to handle $F$ (\idEst{} ``flux''), which is different from standard Finite Volume schemes but can be treated by analogous arguments---see supplementary material.
    We obtain 
    \begin{multline*}
        F\to \int_{0}^{+\infty}\hspace{-0.5em}\int_{\reals^{\spatialDimensionality}} \sum_{\indexVelocity=1}^{\numberVelocities}\distributionFunctionLetter_{\indexVelocity}^{\atEquilibrium}(\limitConservedMoment(\timeVariable, \vectorial{\spaceVariable}))\sum_{\indexDirection = 1}^{\spatialDimensionality} \discreteVelocityLetter_{\indexVelocity, \indexDirection}\partial_{\spaceVariable_{\indexDirection}}\testFunction(\timeVariable, \vectorial{\spaceVariable})\differential{\vectorial{\spaceVariable}}\differential{\timeVariable}\\
         = \int_{0}^{+\infty}\hspace{-0.5em}\int_{\reals^{\spatialDimensionality}}\sum_{\indexDirection = 1}^{\spatialDimensionality} \sum_{\indexLink=1}^{\numberLinks} \discreteVelocityLetter_{2\indexLink, \indexDirection} (\distributionFunctionLetter_{2\indexLink}^{\atEquilibrium}(\limitConservedMoment(\timeVariable, \vectorial{\spaceVariable})) - \distributionFunctionLetter_{2\indexLink + 1}^{\atEquilibrium}(\limitConservedMoment(\timeVariable, \vectorial{\spaceVariable}))) \partial_{\spaceVariable_{\indexDirection}}\testFunction(\timeVariable, \vectorial{\spaceVariable})\differential{\vectorial{\spaceVariable}}\differential{\timeVariable} \\
         = \int_{0}^{+\infty}\hspace{-0.5em}\int_{\reals^{\spatialDimensionality}}\sum_{\indexDirection = 1}^{\spatialDimensionality} \flux_{\indexDirection}(\limitConservedMoment(\timeVariable, \vectorial{\spaceVariable})) \partial_{\spaceVariable_{\indexDirection}}\testFunction(\timeVariable, \vectorial{\spaceVariable})\differential{\vectorial{\spaceVariable}}\differential{\timeVariable},
    \end{multline*}
    where the first equality comes from the particular choice of discrete velocities \eqref{eq:choiceDiscreteVelocities}, and the second one from the consistency constraints \eqref{eq:constraintsConsistency}.
    This shows that the limit equation is the weak form of \eqref{eq:conservationLaw}, with solution $\limitConservedMoment$.
\end{proof}

\subsection{Convergence to the weak entropy solution}

\begin{theorem}
    Under the same assumptions as \Cref{thm:convergenceWeakSolution}, the limit $\limitConservedMoment$ is the unique weak entropy solution to \eqref{eq:conservationLaw}.
\end{theorem}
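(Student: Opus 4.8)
The plan is to derive discrete Kruzhkov entropy inequalities \emph{at the kinetic level} and to pass to the limit following the monotone-scheme argument of \cite{crandall1980monotone}, reusing \emph{verbatim} the summation-by-parts machinery already deployed in \Cref{thm:convergenceWeakSolution}. For every $\krushkovParameter\in[-\maximumInitialDatum, \maximumInitialDatum]$ I introduce the kinetic entropy components $\kineticEntropy_{\indexVelocity}(\distributionFunction_{\indexVelocity}, \krushkovParameter)\definitionEquality |\distributionFunction_{\indexVelocity} - \distributionFunctionLetter_{\indexVelocity}^{\atEquilibrium}(\krushkovParameter)|$, for $\indexVelocity\in\integerInterval{1}{\numberVelocities}$. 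The decisive observation is that \Cref{prop:contractivityCollision} already encodes the entropy inequality for the relaxation step: since the equilibrium is a fixed point of $\vectorial{\collisionOperator}$, picking $\discrete{g}_{\indexVelocity}=\distributionFunctionLetter_{\indexVelocity}^{\atEquilibrium}(\krushkovParameter)$ therein gives, locally at each grid point,
\begin{equation*}
    \sum_{\indexVelocity=1}^{\numberVelocities} |\collisionOperator_{\indexVelocity}(\distributionFunction_{1}, \dots, \distributionFunction_{\numberVelocities}) - \distributionFunctionLetter_{\indexVelocity}^{\atEquilibrium}(\krushkovParameter)| \leq \sum_{\indexVelocity=1}^{\numberVelocities} |\distributionFunction_{\indexVelocity} - \distributionFunctionLetter_{\indexVelocity}^{\atEquilibrium}(\krushkovParameter)|,
\end{equation*}
that is, relaxation does not increase the kinetic entropy, for any admissible $\krushkovParameter$.

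Next, setting the post-collision entropy $\kineticEntropy_{\indexVelocity, \vectorial{\indexSpace}}^{\collided}\definitionEquality |\distributionFunction_{\indexVelocity, \vectorial{\indexSpace}}^{\indexTime, \collided} - \distributionFunctionLetter_{\indexVelocity}^{\atEquilibrium}(\krushkovParameter)|$, the transport step \eqref{eq:transport} gives $|\distributionFunction_{\indexVelocity, \vectorial{\indexSpace}}^{\indexTime+1} - \distributionFunctionLetter_{\indexVelocity}^{\atEquilibrium}(\krushkovParameter)| = \kineticEntropy_{\indexVelocity, \vectorial{\indexSpace}-\vectorial{\discreteVelocityLetter}_{\indexVelocity}/\latticeVelocity}^{\collided}$, a mere shift. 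Summing over $\indexVelocity$, adding and subtracting $\sum_{\indexVelocity}\kineticEntropy_{\indexVelocity, \vectorial{\indexSpace}}^{\collided}$, and invoking the relaxation inequality above yields the conservative discrete entropy inequality
\begin{equation*}
    \sum_{\indexVelocity=1}^{\numberVelocities} \kineticEntropy_{\indexVelocity}(\distributionFunction_{\indexVelocity, \vectorial{\indexSpace}}^{\indexTime+1}, \krushkovParameter) - \sum_{\indexVelocity=1}^{\numberVelocities} \kineticEntropy_{\indexVelocity}(\distributionFunction_{\indexVelocity, \vectorial{\indexSpace}}^{\indexTime}, \krushkovParameter) + \sum_{\indexVelocity=1}^{\numberVelocities}\bigl ( \kineticEntropy_{\indexVelocity, \vectorial{\indexSpace}}^{\collided} - \kineticEntropy_{\indexVelocity, \vectorial{\indexSpace}-\vectorial{\discreteVelocityLetter}_{\indexVelocity}/\latticeVelocity}^{\collided}\bigr ) \leq 0,
\end{equation*}
whose last sum is a discrete divergence of the numerical kinetic entropy flux transported along the discrete velocities.

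I would then multiply this inequality by $\testFunctionDiscrete_{\vectorial{\indexSpace}}^{\indexTime}\geq 0$ and sum in time and space, reproducing the $D$/$I$/$F$ decomposition of \Cref{thm:convergenceWeakSolution}: the time-difference terms pass to the limit through the $D$ and $I$ mechanisms and the divergence term through the $F$ mechanism. Two entropy-specific ingredients enter. First, the strong $\lebesgueInSpace{1}$-convergence $\distributionFunctionsAsFunctionWithStep{\discreteMark_p}\to\vectorial{\limitDistributionFunction}=\vectorial{\distributionFunctionLetter}^{\atEquilibrium}(\limitConservedMoment)$ already proved, combined with \Cref{prop:convergenceEquilibrium}, which forces the post-collision entropies $\kineticEntropy_{\indexVelocity, \vectorial{\indexSpace}}^{\collided}$ to their equilibrium values. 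Second, the monotonicity of the equilibria (\Cref{prop:monotonicityEquilibria}), which for $\conservedVariableDiscrete, \krushkovParameter\in[-\maximumInitialDatum, \maximumInitialDatum]$ and $\indexDirection\in\integerInterval{1}{\spatialDimensionality}$ allows one to factor out a common sign and exploit \eqref{eq:constraintsConsistency} to obtain
\begin{align*}
    \sum_{\indexVelocity=1}^{\numberVelocities}|\distributionFunctionLetter_{\indexVelocity}^{\atEquilibrium}(\conservedVariableDiscrete)-\distributionFunctionLetter_{\indexVelocity}^{\atEquilibrium}(\krushkovParameter)| &= \sign(\conservedVariableDiscrete-\krushkovParameter)(\conservedVariableDiscrete-\krushkovParameter) = |\conservedVariableDiscrete-\krushkovParameter|, \\
    \sum_{\indexVelocity=1}^{\numberVelocities}\discreteVelocityLetter_{\indexVelocity, \indexDirection}|\distributionFunctionLetter_{\indexVelocity}^{\atEquilibrium}(\conservedVariableDiscrete)-\distributionFunctionLetter_{\indexVelocity}^{\atEquilibrium}(\krushkovParameter)| &= \sign(\conservedVariableDiscrete-\krushkovParameter)(\flux_{\indexDirection}(\conservedVariableDiscrete)-\flux_{\indexDirection}(\krushkovParameter)),
\end{align*}
so the discrete kinetic entropy and its flux converge, respectively, to the macroscopic Kruzhkov entropy $|\limitConservedMoment-\krushkovParameter|$ and to the Kruzhkov entropy flux.

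Collecting the limits gives, for every $\krushkovParameter\in[-\maximumInitialDatum, \maximumInitialDatum]$ and every $0\leq\testFunction\in C_c^1([0, +\infty)\times\reals^{\spatialDimensionality})$, the inequality
\begin{multline*}
    \int_{0}^{+\infty}\hspace{-0.5em}\int_{\reals^{\spatialDimensionality}} \Bigl ( |\limitConservedMoment-\krushkovParameter|\,\partial_{\timeVariable}\testFunction + \sum_{\indexDirection=1}^{\spatialDimensionality}\sign(\limitConservedMoment-\krushkovParameter)(\flux_{\indexDirection}(\limitConservedMoment)-\flux_{\indexDirection}(\krushkovParameter))\,\partial_{\spaceVariable_{\indexDirection}}\testFunction\Bigr )\differential\vectorial{\spaceVariable}\,\differential\timeVariable \\
    + \int_{\reals^{\spatialDimensionality}}|\conservedVariable^{\initial}-\krushkovParameter|\,\testFunction(0, \vectorial{\spaceVariable})\differential\vectorial{\spaceVariable}\geq 0,
\end{multline*}
i.e. $\limitConservedMoment$ satisfies all Kruzhkov entropy inequalities; since $\limitConservedMoment$ is valued in $[-\maximumInitialDatum, \maximumInitialDatum]$, restricting $\krushkovParameter$ to this interval is enough. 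Kruzhkov's uniqueness theorem then identifies $\limitConservedMoment$ as the unique weak entropy solution of \eqref{eq:conservationLaw}, which moreover upgrades the subsequential convergence of \Cref{thm:convergenceWeakSolution} to convergence of the whole family. I expect the main obstacle to lie precisely in the limit passage of the entropy-flux divergence term (the analogue of $F$): one must use \Cref{prop:convergenceEquilibrium} to guarantee that the post-collision states are $\bigO{\spaceStep}$-close to equilibrium, so that the numerical kinetic entropy flux genuinely converges to the macroscopic Kruzhkov flux rather than leaving an uncontrolled residual.
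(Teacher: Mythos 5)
Your proposal is correct and follows essentially the same route as the paper's proof: the same Kruzhkov kinetic entropies, the same decisive use of \Cref{prop:contractivityCollision} with the fixed point $(\distributionFunctionLetter_1^{\atEquilibrium}(\krushkovParameter), \dots, \distributionFunctionLetter_{\numberVelocities}^{\atEquilibrium}(\krushkovParameter))$ to obtain a discrete entropy inequality, and the same summation-by-parts and limit passage relying on monotonicity of the equilibria and \eqref{eq:constraintsConsistency}. The only cosmetic differences are bookkeeping: the paper writes the discrete balance entirely on post-collision quantities and takes test functions supported in $(0, +\infty)\times\reals^{\spatialDimensionality}$ (handling $\krushkovParameter\notin[-\maximumInitialDatum, \maximumInitialDatum]$ via weak consistency rather than an initial-data term), whereas you mix pre- and post-collision states and retain the term $\int_{\reals^{\spatialDimensionality}}|\conservedVariable^{\initial}-\krushkovParameter|\,\testFunction(0, \vectorial{\spaceVariable})\differential\vectorial{\spaceVariable}$; both formulations are equivalent.
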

\begin{proof}
    We utilize Krushkov kinetic entropies as in \cite{natalini1998discrete}: $\kineticEntropy_{\indexVelocity}(\distributionFunction_{\indexVelocity}) \definitionEquality |\distributionFunction_{\indexVelocity} - \distributionFunctionLetter_{\indexVelocity}^{\atEquilibrium}(\krushkovParameter)|$, for $\krushkovParameter\in\reals$, with $\indexVelocity\in\integerInterval{1}{\numberVelocities}$.
    We study the case $\krushkovParameter\in[-\maximumInitialDatum, \maximumInitialDatum]$, since weak consistency, \confer{} \Cref{thm:convergenceWeakSolution}, rules the case $\krushkovParameter\in\reals\smallsetminus[-\maximumInitialDatum, \maximumInitialDatum]$.
    We follow the approach by \cite{caetano2024result}: the discrete entropy balance is written on post-relaxation quantities, as 
    \begin{align*}
        \sum\limits_{\indexVelocity=1}^{\numberVelocities} \kineticEntropy_{\indexVelocity}(\distributionFunction_{\indexVelocity, \vectorial{\indexSpace}}^{\indexTime + 1, \collided}) &= \sum_{\indexVelocity=1}^{\numberVelocities}  |\collisionOperator_{\indexVelocity}(\distributionFunction_{1, \vectorial{\indexSpace}}^{\indexTime, \collided}, \distributionFunction_{2, \vectorial{\indexSpace} - \vectorial{\discreteVelocityLetter}_2/\latticeVelocity}^{\indexTime, \collided}, \dots, \distributionFunction_{\numberVelocities, \vectorial{\indexSpace} - \vectorial{\discreteVelocityLetter}_{\numberVelocities}/\latticeVelocity}^{\indexTime, \collided}) - \collisionOperator_{\indexVelocity}(\distributionFunctionLetter_1^{\atEquilibrium}(\krushkovParameter), \dots, \distributionFunctionLetter_{\numberVelocities}^{\atEquilibrium}(\krushkovParameter))|\\
        &\leq\sum_{\indexVelocity=1}^{\numberVelocities}  |\distributionFunction_{\indexVelocity, \vectorial{\indexSpace} - \vectorial{\discreteVelocityLetter}_{\indexVelocity}/\latticeVelocity}^{\indexTime, \collided} - \distributionFunctionLetter_{\indexVelocity}^{\atEquilibrium}(\krushkovParameter)| = \sum\limits_{\indexVelocity=1}^{\numberVelocities} \kineticEntropy_{\indexVelocity}(\distributionFunction_{\indexVelocity, \vectorial{\indexSpace} - \vectorial{\discreteVelocityLetter}_{\indexVelocity}/\latticeVelocity}^{\indexTime, \collided}),
    \end{align*}
    where we have used \Cref{prop:contractivityCollision}.
    We obtain 
    \begin{equation*}
        \frac{\sum\limits_{\indexVelocity=1}^{\numberVelocities} \kineticEntropy_{\indexVelocity}(\distributionFunction_{\indexVelocity, \vectorial{\indexSpace}}^{\indexTime + 1, \collided}) - \sum\limits_{\indexVelocity=1}^{\numberVelocities} \kineticEntropy_{\indexVelocity}(\distributionFunction_{\indexVelocity, \vectorial{\indexSpace}}^{\indexTime, \collided})}{\timeStep} \leq \frac{\sum\limits_{\indexVelocity=1}^{\numberVelocities} \kineticEntropy_{\indexVelocity}(\distributionFunction_{\indexVelocity, \vectorial{\indexSpace} - \vectorial{\discreteVelocityLetter}_{\indexVelocity}/\latticeVelocity}^{\indexTime, \collided})- \sum\limits_{\indexVelocity=1}^{\numberVelocities} \kineticEntropy_{\indexVelocity}(\distributionFunction_{\indexVelocity, \vectorial{\indexSpace}}^{\indexTime, \collided})}{\timeStep}.
    \end{equation*}
    Considering a test function $\testFunction\in C_c^1((0, +\infty)\times\reals^{\spatialDimensionality})$ such that $\testFunction\geq 0$ and its discretization, as in the proof of \Cref{thm:convergenceWeakSolution}, yields
    \begin{equation*}
        \timeStep\spaceStep^{\spatialDimensionality}\sum_{\indexTime = 1}^{+\infty} \sum_{\vectorial{\indexSpace}\in\relatives^{\spatialDimensionality}} \sum\limits_{\indexVelocity=1}^{\numberVelocities}\kineticEntropy_{\indexVelocity}(\distributionFunction_{\indexVelocity, \vectorial{\indexSpace}}^{\indexTime, \collided})\frac{\testFunctionDiscrete_{\vectorial{\indexSpace}}^{\indexTime - 1} - \testFunctionDiscrete_{\vectorial{\indexSpace}}^{\indexTime}}{\timeStep} 
        \leq \timeStep\spaceStep^{\spatialDimensionality}\sum_{\indexTime\in\naturals} \sum_{\vectorial{\indexSpace}\in\relatives^{\spatialDimensionality}} \sum\limits_{\indexVelocity=1}^{\numberVelocities}\kineticEntropy_{\indexVelocity}(\distributionFunction_{\indexVelocity, \vectorial{\indexSpace}}^{\indexTime, \collided})\frac{\testFunctionDiscrete_{\vectorial{\indexSpace} + \vectorial{\discreteVelocityLetter}_{\indexVelocity}/\latticeVelocity}^{\indexTime} - \testFunctionDiscrete_{\vectorial{\indexSpace}}^{\indexTime}}{\timeStep}.
    \end{equation*}
    In terms of integrals
    \begin{multline*}
        \int_{\timeStep}^{+\infty}\hspace{-0.5em}\int_{\reals^{\spatialDimensionality}}\sum\limits_{\indexVelocity=1}^{\numberVelocities} \kineticEntropy_{\indexVelocity}(\distributionFunctionsAsFunctionComponent{\indexVelocity}^{\collided}(\timeVariable, \vectorial{\spaceVariable}))\frac{\testFunctionWithStep(\timeVariable - \timeStep, \vectorial{\spaceVariable})-\testFunctionWithStep(\timeVariable, \vectorial{\spaceVariable})}{\timeStep}\differential{\vectorial{\spaceVariable}}\differential{\timeVariable} \\
        \leq\int_{0}^{+\infty}\hspace{-0.5em}\int_{\reals^{\spatialDimensionality}}\sum\limits_{\indexVelocity=1}^{\numberVelocities} \kineticEntropy_{\indexVelocity}(\distributionFunctionsAsFunctionComponent{\indexVelocity}^{\collided}(\timeVariable, \vectorial{\spaceVariable})) \frac{\testFunctionWithStep(\timeVariable, \vectorial{\spaceVariable}+\spaceStep\vectorial{\discreteVelocityLetter}_{\indexVelocity}/\latticeVelocity) - \testFunctionWithStep(\timeVariable, \vectorial{\spaceVariable})}{\timeStep}\differential{\vectorial{\spaceVariable}}\differential{\timeVariable}.
    \end{multline*}
    Formally (rigorous justifications to exchange limits and integrals can be obtained as for \Cref{thm:convergenceWeakSolution}), considering that at the limit, there is no difference between starred and unstarred quantities, the left-hand side tends to 
    \begin{multline*}
        -\int_{0}^{+\infty}\hspace{-0.5em}\int_{\reals^{\spatialDimensionality}}\sum\limits_{\indexVelocity=1}^{\numberVelocities} |\distributionFunctionLetter_{\indexVelocity}^{\atEquilibrium}(\limitConservedMoment(\timeVariable, \vectorial{\spaceVariable})) - \distributionFunctionLetter_{\indexVelocity}^{\atEquilibrium}(\krushkovParameter)|\partial_{\timeVariable}\testFunction(\timeVariable, \vectorial{\spaceVariable})\differential{\vectorial{\spaceVariable}}\differential{\timeVariable}\\
        =-\int_{0}^{+\infty}\hspace{-0.5em}\int_{\reals^{\spatialDimensionality}}|\limitConservedMoment(\timeVariable, \vectorial{\spaceVariable})- \krushkovParameter|\partial_{\timeVariable}\testFunction(\timeVariable, \vectorial{\spaceVariable})\differential{\vectorial{\spaceVariable}}\differential{\timeVariable}
    \end{multline*} 
    by monotonicity of the equilibria, since $\krushkovParameter\in[-\maximumInitialDatum, \maximumInitialDatum]$.
    The right-hand side tends to
    \begin{multline*}
        \int_{0}^{+\infty}\hspace{-0.5em}\int_{\reals^{\spatialDimensionality}} \sum_{\indexVelocity=1}^{\numberVelocities}|\distributionFunctionLetter_{\indexVelocity}^{\atEquilibrium}(\limitConservedMoment(\timeVariable, \vectorial{\spaceVariable}))-\distributionFunctionLetter_{\indexVelocity}^{\atEquilibrium}(\krushkovParameter)|\sum_{\indexDirection = 1}^{\spatialDimensionality} \discreteVelocityLetter_{\indexVelocity, \indexDirection}\partial_{\spaceVariable_{\indexDirection}}\testFunction(\timeVariable, \vectorial{\spaceVariable})\differential{\vectorial{\spaceVariable}}\differential{\timeVariable}\\
        =\int_{0}^{+\infty}\hspace{-0.5em}\int_{\reals^{\spatialDimensionality}} \sum_{\indexVelocity=1}^{\numberVelocities}\sign(\limitConservedMoment(\timeVariable, \vectorial{\spaceVariable})-\krushkovParameter)(\distributionFunctionLetter_{\indexVelocity}^{\atEquilibrium}(\limitConservedMoment(\timeVariable, \vectorial{\spaceVariable}))-\distributionFunctionLetter_{\indexVelocity}^{\atEquilibrium}(\krushkovParameter))\sum_{\indexDirection = 1}^{\spatialDimensionality} \discreteVelocityLetter_{\indexVelocity, \indexDirection}\partial_{\spaceVariable_{\indexDirection}}\testFunction(\timeVariable, \vectorial{\spaceVariable})\differential{\vectorial{\spaceVariable}}\differential{\timeVariable}\\
         = \int_{0}^{+\infty}\hspace{-0.5em}\int_{\reals^{\spatialDimensionality}}\sign(\limitConservedMoment(\timeVariable, \vectorial{\spaceVariable})-\krushkovParameter)\\
         \times\sum_{\indexDirection = 1}^{\spatialDimensionality} \sum_{\indexLink=1}^{\numberLinks} \discreteVelocityLetter_{2\indexLink, \indexDirection} (\distributionFunctionLetter_{2\indexLink}^{\atEquilibrium}(\limitConservedMoment(\timeVariable, \vectorial{\spaceVariable})) - \distributionFunctionLetter_{2\indexLink + 1}^{\atEquilibrium}(\limitConservedMoment(\timeVariable, \vectorial{\spaceVariable})) - \distributionFunctionLetter_{2\indexLink}^{\atEquilibrium}(\krushkovParameter) + \distributionFunctionLetter_{2\indexLink + 1}^{\atEquilibrium}(\krushkovParameter)) \partial_{\spaceVariable_{\indexDirection}}\testFunction(\timeVariable, \vectorial{\spaceVariable})\differential{\vectorial{\spaceVariable}}\differential{\timeVariable} \\
         = \int_{0}^{+\infty}\hspace{-0.5em}\int_{\reals^{\spatialDimensionality}}\sign(\limitConservedMoment(\timeVariable, \vectorial{\spaceVariable})-\krushkovParameter)\sum_{\indexDirection = 1}^{\spatialDimensionality} (\flux_{\indexDirection}(\limitConservedMoment(\timeVariable, \vectorial{\spaceVariable}))-\flux_{\indexDirection}(\krushkovParameter)) \partial_{\spaceVariable_{\indexDirection}}\testFunction(\timeVariable, \vectorial{\spaceVariable})\differential{\vectorial{\spaceVariable}}\differential{\timeVariable},
    \end{multline*}
    where the first equality comes from the monotonicity of the equilibria, and the last one uses \eqref{eq:constraintsConsistency}.
    This is the weak entropy inequality of \eqref{eq:conservationLaw} with Krushkov entropies.
\end{proof}

\section{Numerical experiments}\label{sec:numericalExperiments}

We now corroborate the theoretical findings \emph{via} numerical simulations, conducted using the package \texttt{pyLBM}.\footnote{See \hyperlink{https://pylbm.readthedocs.io}{https://pylbm.readthedocs.io} for more information.}
For the sake of conciseness, we consider test cases for $\spatialDimensionality = 1, 2$. However, the theory that we have previously developed applies to any spatial dimension $\spatialDimensionality \in\naturalsWithoutZero$.

\subsection{\lbmScheme{1}{3} scheme}

\begin{figure}
    \begin{center}
        \includegraphics[width = 1\textwidth]{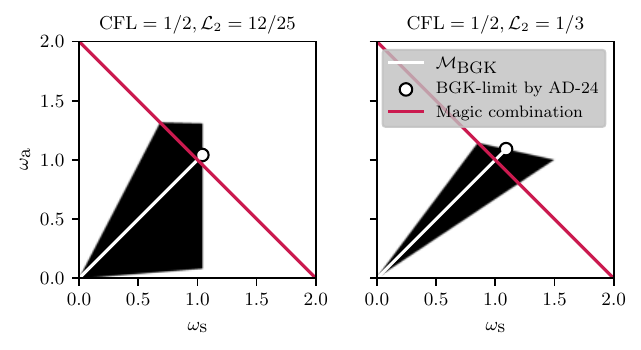}    
    \end{center}\caption{\label{fig:D1Q3}Monotonicity area $\monotoneZoneTwoD$ (in black) in the $\relaxationParameterSymmetric\relaxationParameterAntiSymmetric$-plane for the \lbmScheme{1}{3} scheme.}
\end{figure}

We consider the example proposed in \cite[Remark 3.2]{aregba2024convergence} in the BGK context, where $\spatialDimensionality = 1$, $\numberLinks = 1$, and $\discreteVelocityLetter_2 = \latticeVelocity$. The consistency constraints \eqref{eq:constraintsConsistency3} yield $\equilibriumCoefficientLinear_1 = 1 - 2\equilibriumCoefficientLinear_2$, $\equilibriumCoefficientFlux_1 = 0$, and $\equilibriumCoefficientFlux_2 = \tfrac{1}{2\latticeVelocity}$, leaving $\equilibriumCoefficientLinear_2$ as a free parameter. 
We compare the monotonicity conditions provided by \Cref{prop:monotonicityConditions} with those from the BGK framework, \confer{} \cite{aregba2024convergence}. For the specific case where $\text{CFL} \definitionEquality \tfrac{1}{\latticeVelocity} \max_{\conservedVariableDiscrete \in [-\maximumInitialDatum, \maximumInitialDatum]} | \flux' (\conservedVariableDiscrete) | = \frac{1}{2}$, we examine two different values for $\equilibriumCoefficientLinear_2$. The corresponding monotonicity regions $\monotoneZoneTwoD$ are illustrated in black in \Cref{fig:D1Q3}.
This example highlights that, compared to the BGK case, increasing $\relaxationParameterAntiSymmetric$ allows for a reduction in numerical diffusion while maintaining monotonicity. We focus on the Burgers flux $\flux(\conservedVariable) = {\conservedVariable^2}/{2}$, set $\latticeVelocity = 2$, and use initial data within the interval $[0, 1]$. The numerical simulations are conducted on the domain $[-1, 1]$, equipped with periodic boundary conditions, until final time $\finalTime=\tfrac{1}{4}$.

\subsubsection{Qualitative properties of the solution: less numerical diffusion thanks to TRT}

\begin{figure}
    \begin{center}
        $\equilibriumCoefficientLinear_2 = \frac{12}{25}$\\
        \includegraphics[width = 0.99\textwidth]{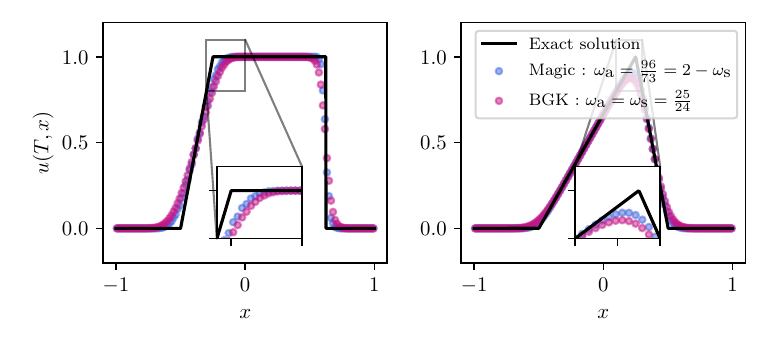} \\
        $\equilibriumCoefficientLinear_2 = \frac{1}{3}$\\
        \includegraphics[width = 0.99\textwidth]{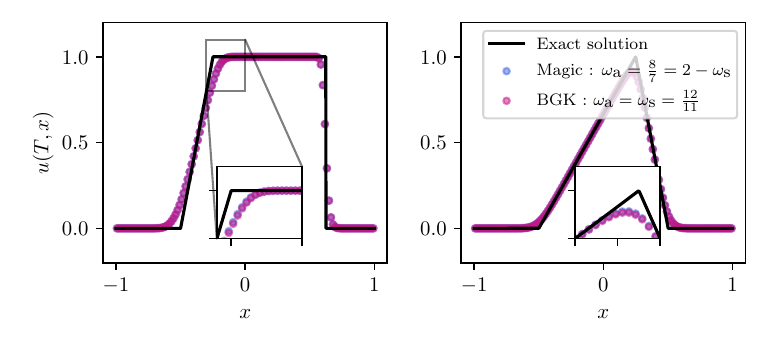}        
    \end{center}\caption{\label{fig:D1Q3plot}Solution of the \lbmScheme{1}{3} schemes and exact solution at final time $1/4$.}
\end{figure}

We conduct tests for $\equilibriumCoefficientLinear_2 = \frac{12}{25}$, examining two cases: the BGK limit $\relaxationParameterSymmetric = \relaxationParameterAntiSymmetric = \frac{25}{24}$, and the so-called ``magic limit'', which maximizes $\relaxationParameterAntiSymmetric$ to minimize numerical diffusion, with $\relaxationParameterSymmetric = \frac{50}{73}$ and $\relaxationParameterAntiSymmetric = \frac{96}{73}$. 
Similarly, for $\equilibriumCoefficientLinear_2 = \frac{1}{3}$, we consider the BGK limit $\relaxationParameterSymmetric = \relaxationParameterAntiSymmetric = \frac{12}{11}$ and the magic limit $\relaxationParameterAntiSymmetric = \frac{8}{7}$.
The results, presented in \Cref{fig:D1Q3plot}, are obtained using a grid with 128 points and two different initial conditions: $\conservedVariable^{\initial}(\spaceVariable) = \indicatorFunction{[0, \frac{1}{2}]}(|\spaceVariable|)$ and $\conservedVariable^{\initial}(\spaceVariable) = (1 - 2|\spaceVariable|)\indicatorFunction{[0, \frac{1}{2}]}(|\spaceVariable|)$. As predicted by the modified equation \eqref{eq:modifiedEquation}, the \strong{TRT scheme exhibits less numerical diffusion}, \confer{} sharper edges, compared to the BGK case.

\subsubsection{Invariant compact set}

\begin{figure}
    \begin{center}
        \includegraphics[width = 0.99\textwidth]{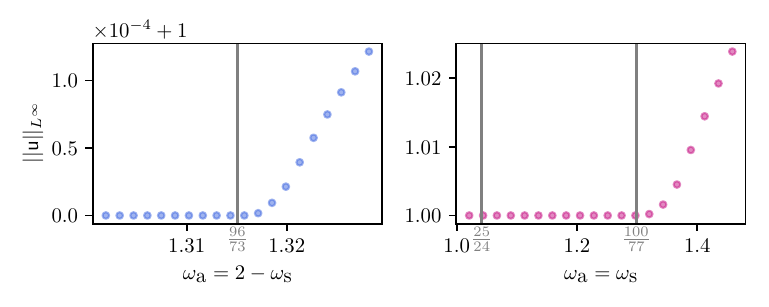}       
    \end{center}\caption{\label{fig:D1Q3_maximum}Time-space maximum of the conserved moment for the \lbmScheme{1}{3} scheme with $\conservedVariable^{\initial}(\spaceVariable) = \indicatorFunction{[0, 1/2]}(|\spaceVariable|)$.}
\end{figure}

Taking $\equilibriumCoefficientLinear_2 = \frac{12}{25}$, we also verify that the conserved moment stays in $[-\maximumInitialDatum, \maximumInitialDatum]$ using the initial condition $\conservedVariable^{\initial}(\spaceVariable) = \indicatorFunction{[0, \frac{1}{2}]}(|\spaceVariable|)$ and a grid with 128 points. 
The results shown in \Cref{fig:D1Q3_maximum} indicate that while the bounds from \Cref{prop:monotonicityConditions} are \strong{necessary in the magic case} to ensure that the maximum principle holds, they are \strong{not necessary}---at least in the considered configuration---for the BGK case ($\monotoneSegmentBGK=(0, \tfrac{25}{24}]$), which is highly diffusive. In fact, in the BGK context, violations of the invariant compact set occur for approximately the same relaxation parameter as in the magic case. In particular, the threshold for the violation of the invariant compact set in the BGK case manifests roughly at $\relaxationParameterSymmetric=\relaxationParameterAntiSymmetric=\tfrac{100}{77}$, which is the one from \eqref{eq:inequalityOmegaBGK} discarding the (first) constraint imposed by the zero velocity.
This behavior highlights a key distinction: in the BGK framework, the inclusion of the zero velocity imposes stricter monotonicity conditions.
However, the specific initialization \eqref{eq:initializationEquilibrium} mitigates the issues caused by violating these conditions, effectively preserving desirable properties even in the presence of such violations.

\subsubsection{Convergence under mesh refinement}

\begin{table}
    \begin{center}\caption{\label{tab:magicD1Q3}Errors and orders of convergence under magic combination ($\relaxationParameterSymmetric + \relaxationParameterAntiSymmetric = 2$).}
        \begin{footnotesize}\setlength{\tabcolsep}{1.5pt}\renewcommand{\arraystretch}{1.2}
            \begin{tabular}{|c|cc|cc|cc|cc|cc|cc|}
                \hline
                $\spaceStep$ & \multicolumn{2}{c|}{$\relaxationParameterAntiSymmetric = 1$} & \multicolumn{2}{c|}{$\relaxationParameterAntiSymmetric = \tfrac{169}{146}$} & \multicolumn{2}{c|}{$\relaxationParameterAntiSymmetric = \tfrac{96}{73}$} & \multicolumn{2}{c|}{$\relaxationParameterAntiSymmetric = \tfrac{3}{2}$} & \multicolumn{2}{c|}{$\relaxationParameterAntiSymmetric = \tfrac{199}{100}$} & \multicolumn{2}{c|}{$\relaxationParameterAntiSymmetric = 2$}\\
                & Error & Ord. & Error & Ord. & Error & Ord. & Error & Ord. & Error & Ord. & Error & Ord. \\
                \hline 
                \hline
                \multicolumn{13}{|c|}{Indicator function initial datum: $\conservedVariable^{\initial}(\spaceVariable) = \indicatorFunction{[0, 1/2]}(|\spaceVariable|)$}\\
                \hline
                3.13E-02 &	1.49E-01 &	     & 1.28E-01 &	     & 1.10E-01 &	     & 9.23E-02 &      & 1.36E-01 &      & 1.41E-01 &	      \\                         
                1.56E-02 &	9.26E-02 &	0.69 & 7.68E-02 &	0.73 & 6.37E-02 &	0.79 & 5.12E-02 & 0.85 & 9.87E-02 & 0.46 & 1.12E-01 &	0.36 \\ 
                7.81E-03 &	5.55E-02 &	0.74 & 4.50E-02 &	0.77 & 3.65E-02 &	0.81 & 2.85E-02 & 0.85 & 9.31E-02 & 0.08 & 1.23E-01 &	-0.13\\ 
                3.91E-03 &	3.22E-02 &	0.78 & 2.59E-02 &	0.80 & 2.07E-02 &	0.82 & 1.59E-02 & 0.85 & 6.99E-02 & 0.42 & 1.14E-01 &	0.10 \\ 
                1.95E-03 &	1.84E-02 &	0.81 & 1.47E-02 &	0.82 & 1.16E-02 &	0.83 & 8.77E-03 & 0.85 & 4.24E-02 & 0.72 & 1.08E-01 &	0.08 \\ 
                9.77E-04 &	1.04E-02 &	0.83 & 8.21E-03 &	0.84 & 6.46E-03 &	0.85 & 4.81E-03 & 0.87 & 2.24E-02 & 0.92 & 1.06E-01 &	0.03 \\ 
                4.88E-04 &	5.80E-03 &	0.84 & 4.55E-03 &	0.85 & 3.56E-03 &	0.86 & 2.62E-03 & 0.88 & 1.12E-02 & 1.00 & 1.02E-01 &	0.06 \\ 
                2.44E-04 &	3.21E-03 &	0.86 & 2.50E-03 &	0.86 & 1.94E-03 &	0.87 & 1.41E-03 & 0.89 & 5.69E-03 & 0.98 & 1.01E-01 &	0.02 \\ 
                1.22E-04 &	1.76E-03 &	0.87 & 1.37E-03 &	0.88 & 1.05E-03 &	0.88 & 7.60E-04 & 0.90 & 2.86E-03 & 0.99 & 9.92E-02 &	0.02 \\ 
                6.10E-05 &	9.57E-04 &	0.88 & 7.39E-04 &	0.89 & 5.67E-04 &	0.89 & 4.06E-04 & 0.90 & 1.43E-03 & 1.00 & 9.87E-02 &	0.01 \\
                \hline
                \multicolumn{13}{|c|}{Hat function initial datum: $ \conservedVariable^{\initial}(\spaceVariable) = (1-2|\spaceVariable|)\indicatorFunction{[0, 1/2]}(|\spaceVariable|)$}\\
                \hline
                3.13E-02 & 5.98E-02 &      & 4.53E-02 &      & 3.38E-02 &      & 2.28E-02	&      & 8.48E-03	&      & 8.66E-03   &       \\                         
                1.56E-02 & 3.12E-02 & 0.94 & 2.32E-02 & 0.97 & 1.70E-02 & 1.00 & 1.12E-02	& 1.03 & 3.19E-03	& 1.41 & 3.28E-03	& 1.40 \\ 
                7.81E-03 & 1.59E-02 & 0.97 & 1.18E-02 & 0.98 & 8.51E-03 & 1.00 & 5.53E-03	& 1.01 & 1.21E-03	& 1.39 & 1.29E-03	& 1.35 \\ 
                3.91E-03 & 8.08E-03 & 0.98 & 5.92E-03 & 0.99 & 4.27E-03 & 1.00 & 2.75E-03	& 1.01 & 4.66E-04	& 1.38 & 5.12E-04	& 1.33 \\ 
                1.95E-03 & 4.07E-03 & 0.99 & 2.98E-03 & 0.99 & 2.14E-03 & 1.00 & 1.38E-03	& 1.00 & 1.80E-04	& 1.38 & 2.05E-04	& 1.32 \\ 
                9.77E-04 & 2.05E-03 & 0.99 & 1.49E-03 & 1.00 & 1.07E-03 & 1.00 & 6.87E-04	& 1.00 & 7.15E-05	& 1.33 & 8.31E-05	& 1.30 \\ 
                4.88E-04 & 1.03E-03 & 1.00 & 7.48E-04 & 1.00 & 5.36E-04 & 1.00 & 3.43E-04	& 1.00 & 2.84E-05	& 1.33 & 3.37E-05	& 1.30 \\ 
                2.44E-04 & 5.14E-04 & 1.00 & 3.74E-04 & 1.00 & 2.68E-04 & 1.00 & 1.72E-04	& 1.00 & 1.13E-05	& 1.33 & 1.37E-05	& 1.30 \\ 
                1.22E-04 & 2.57E-04 & 1.00 & 1.87E-04 & 1.00 & 1.34E-04 & 1.00 & 8.58E-05	& 1.00 & 4.53E-06	& 1.32 & 5.54E-06	& 1.30 \\ 
                6.10E-05 & 1.29E-04 & 1.00 & 9.37E-05 & 1.00 & 6.70E-05 & 1.00 & 4.29E-05	& 1.00 & 1.83E-06	& 1.31 & 2.25E-06	& 1.30 \\ 
                \hline
            \end{tabular}    
        \end{footnotesize}
    \end{center}
\end{table}

\begin{table}
    \begin{center}\caption{\label{tab:BGKD1Q3}Errors and orders of convergence under BGK ($\relaxationParameterSymmetric = \relaxationParameterAntiSymmetric$).}
        \begin{footnotesize}\setlength{\tabcolsep}{2pt}\renewcommand{\arraystretch}{1.2}
            \begin{tabular}{|c|cc|cc|cc|cc|cc|}
                \hline
                $\spaceStep$ &  \multicolumn{2}{c|}{$\relaxationParameterAntiSymmetric = \tfrac{49}{48}$} & \multicolumn{2}{c|}{$\relaxationParameterAntiSymmetric = \tfrac{25}{24}$} & \multicolumn{2}{c|}{$\relaxationParameterAntiSymmetric = \tfrac{3}{2}$} & \multicolumn{2}{c|}{$\relaxationParameterAntiSymmetric = \tfrac{199}{100}$} & \multicolumn{2}{c|}{$\relaxationParameterAntiSymmetric = 2$}\\
                & Error & Ord. & Error & Ord. & Error & Ord. & Error & Ord. & Error & Ord. \\
                \hline 
                \hline
                \multicolumn{11}{|c|}{Indicator function initial datum: $\conservedVariable^{\initial}(\spaceVariable) = \indicatorFunction{[0, 1/2]}(|\spaceVariable|)$}\\
                \hline
                3.12E-02	& 1.46E-01	&      & 1.43E-01&	     & 8.69E-02 &	     & 	1.42E-01 &	     & 1.50E-01 &	     \\     
                1.56E-02	& 9.03E-02	& 0.69 & 8.81E-02&	0.69 & 5.01E-02 &	0.79 & 	1.09E-01 &	0.38 & 1.23E-01 &	0.29 \\
                7.81E-03	& 5.40E-02	& 0.74 & 5.25E-02&	0.75 & 2.84E-02 &	0.82 & 	8.33E-02 &	0.39 & 1.11E-01 &	0.15 \\
                3.91E-03	& 3.13E-02	& 0.79 & 3.04E-02&	0.79 & 1.59E-02 &	0.84 & 	6.57E-02 &	0.34 & 1.00E-01 &	0.14 \\
                1.95E-03	& 1.78E-02	& 0.81 & 1.73E-02&	0.81 & 8.80E-03 &	0.85 & 	4.19E-02 &	0.65 & 9.55E-02 &	0.07 \\
                9.77E-04	& 1.01E-02	& 0.83 & 9.76E-03&	0.83 & 4.83E-03 &	0.87 & 	2.20E-02 &	0.93 & 9.11E-02 &	0.07 \\ 
                4.88E-04	& 5.62E-03	& 0.84 & 5.44E-03&	0.84 & 2.63E-03 &	0.88 & 	1.11E-02 &	0.99 & 8.94E-02 &	0.03 \\
                2.44E-04	& 3.10E-03	& 0.86 & 3.00E-03&	0.86 & 1.42E-03 &	0.89 & 	5.53E-03 &	1.00 & 1.76E+13 &	-47.5\\ 
                1.22E-04	& 1.70E-03	& 0.87 & 1.64E-03&	0.87 & 7.62E-04 &	0.90 & 	2.76E-03 &	1.00 & 8.80E+12 &	1.00 \\ 
                6.10E-05	& 9.25E-04	& 0.88 & 8.94E-04&	0.88 & 4.07E-04 &	0.90 & 	1.38E-03 &	1.00 & 4.40E+12 &	1.00 \\ 
                \hline
                \multicolumn{11}{|c|}{Hat function initial datum: $ \conservedVariable^{\initial}(\spaceVariable) = (1-2|\spaceVariable|)\indicatorFunction{[0, 1/2]}(|\spaceVariable|)$}\\
                \hline 
                3.12E-02	& 5.77E-02	&      & 5.56E-02 &	     & 2.28E-02&	     & 8.34E-03 &	     & 8.51E-03 &	    \\     
                1.56E-02	& 3.00E-02	& 0.94 & 2.89E-02 &	0.95 & 1.12E-02&	1.03 & 3.08E-03 &	1.44 & 3.22E-03 &	1.40\\
                7.81E-03	& 1.53E-02	& 0.97 & 1.47E-02 &	0.97 & 5.53E-03&	1.01 & 1.17E-03 &	1.39 & 1.27E-03 &	1.35\\
                3.91E-03	& 7.76E-03	& 0.98 & 7.45E-03 &	0.98 & 2.75E-03&	1.01 & 4.48E-04 &	1.39 & 5.07E-04 &	1.32\\
                1.95E-03	& 3.91E-03	& 0.99 & 3.75E-03 &	0.99 & 1.37E-03&	1.00 & 1.70E-04 &	1.39 & 2.02E-04 &	1.33\\
                9.77E-04	& 1.96E-03	& 0.99 & 1.88E-03 &	0.99 & 6.87E-04&	1.00 & 6.75E-05 &	1.34 & 8.18E-05 &	1.31\\
                4.88E-04	& 9.85E-04	& 1.00 & 9.44E-04 &	1.00 & 3.43E-04&	1.00 & 2.68E-05 &	1.33 & 3.30E-05 &	1.31\\
                2.44E-04	& 4.93E-04	& 1.00 & 4.73E-04 &	1.00 & 1.72E-04&	1.00 & 1.07E-05 &	1.32 & 1.34E-05 &	1.30\\
                1.22E-04	& 2.47E-04	& 1.00 & 2.37E-04 &	1.00 & 8.58E-05&	1.00 & 4.30E-06 &	1.32 & 5.41E-06 &	1.30\\
                6.10E-05	& 1.23E-04	& 1.00 & 1.18E-04 &	1.00 & 4.29E-05&	1.00 & 1.74E-06 &	1.30 & 2.18E-06 &	1.31\\
                \hline
            \end{tabular}    
        \end{footnotesize}
    \end{center}
\end{table}

For the magic combination, regardless of being in the monotonicity zone, we obtain the results presented in \Cref{tab:magicD1Q3}, where the $\lebesgueTime{\infty}L_{\spaceVariable}^1$-error on $\conservedVariableDiscrete$ is provided. 
To interpret these results, recall that if the solution belongs to the Besov space $\besovSpace{1}{s}{\infty}$ with $0 < s < \mu + 1$ (where $\mu$ is the order of the scheme), one typically expects convergence in the $L_{\spaceVariable}^1$-norm at order $\bigO{\spaceStep^{\frac{s\mu}{\mu + 1}}}$, assuming the scheme converges and in the linear case, as shown in \cite[Theorem 4.2]{brenner2006besov}. 
For the indicator function case, we observe that for $\relaxationParameterAntiSymmetric \leq \tfrac{3}{2}$, the empirical convergence rate exceeds $1/2$. This aligns with the fact that in non-linear settings (aside from pathological initial data, \confer{} \cite{sabac1997optimal}), shock solutions often yield better convergence rates than expected. When $\relaxationParameterAntiSymmetric$ approaches, but does not equal, two, the scheme behaves as a second-order accurate one ($\mu = 2$). Since $\conservedVariable^{\initial} \in \besovSpace{1}{1}{\infty}$ \cite[Section 2.4, Example II]{brenner2006besov}, the linear theory predicts an order $2/3$. However, in our non-linear setting, the empirical rate is slightly better, equating to one. Conversely, when $\relaxationParameterAntiSymmetric = 2$, the scheme becomes unstable and fails to converge.
For the hat function case, where $\conservedVariable^{\initial} \in \besovSpace{1}{2}{\infty}$, we consistently observe first-order empirical convergence for $\relaxationParameterAntiSymmetric$ way apart from $2$. This matches the linear theoretical prediction. As $\relaxationParameterAntiSymmetric$ approaches or equals two, the scheme demonstrates second-order accuracy, with an empirical convergence rate close to $4/3 = 1.\overline{3}$, again consistent with the linear theory.

The results for the BGK case, provided in \Cref{tab:BGKD1Q3}, are analogous.

\subsubsection{Convergence to the equilibrium}\label{sec:D1Q3convergenceEquilibrium}

\begin{figure}
    \begin{center}
        \includegraphics[width=0.99\textwidth]{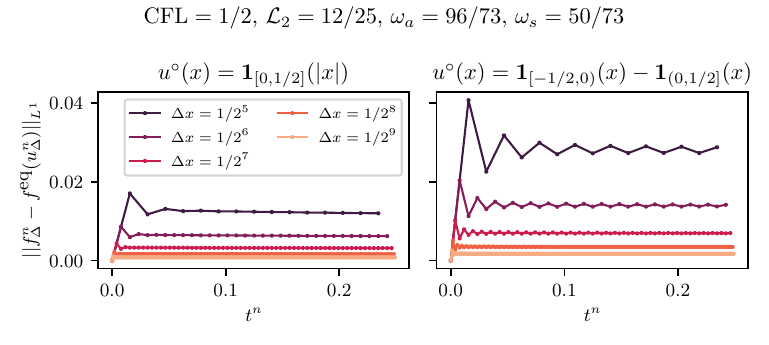}
        \includegraphics[width=0.99\textwidth]{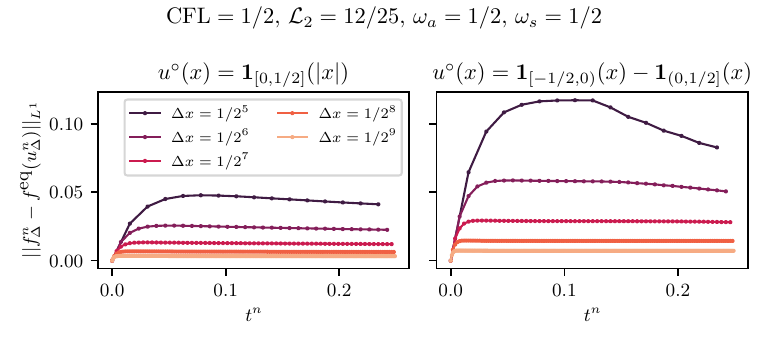}
    \end{center}\caption{\label{fig:D1Q3convergenceEquilibrium}Distance from the equilibrium for the \lbmScheme{1}{3} scheme at different resolutions, parameters, and initial data.}
\end{figure}

We monitor the quantity $\lVert \distributionFunctionsAsFunctionSpace{\indexTime} - \vectorial{\distributionFunctionLetter}^{\atEquilibrium}(\conservedVariableDiscreteAsFunctionSpace{\indexTime})\rVert_{\lebesgueSpace{1}}$, testing for various parameters, resolutions, and initial data, as illustrated in \Cref{fig:D1Q3convergenceEquilibrium}. 
In the left plot, the initial datum has $\totalVariation{\conservedVariable^{\initial}} = 2$, while in the right plot, the total variation is doubled: $\totalVariation{\conservedVariable^{\initial}} = 4$. The results confirm that \eqref{eq:convergenceEquilibrium} and \eqref{eq:geometricConvergenceEquilibrium} are \strong{sharp}: the trend of the observed $\lVert \distributionFunctionsAsFunctionSpace{\indexTime} - \vectorial{\distributionFunctionLetter}^{\atEquilibrium}(\conservedVariableDiscreteAsFunctionSpace{\indexTime})\rVert_{L^{1}}$ scales linearly with both $\spaceStep$ and $\totalVariation{\conservedVariable^{\initial}}$. 
Finally, the observed convergence to the stationary value is geometrically damped. When relaxation parameters exceed one, oscillations appear in the convergence behavior.

\subsection{\lbmScheme{2}{5} scheme}

We take $\spatialDimensionality=2$ with $\numberLinks = 2$, and $\vectorial{\discreteVelocityLetter}_2 = \transpose{(\latticeVelocity, 0)}$, $\vectorial{\discreteVelocityLetter}_4 = \transpose{(0, \latticeVelocity)}$.
After enforcing \eqref{eq:constraintsConsistency3}, $\equilibriumCoefficientLinear_2$ and $\equilibriumCoefficientLinear_4$ remain free.
    We consider the setting where we take the flux $\flux(\conservedVariable)$ of a 1D conservation law and we construct 2D fluxes by $\flux_1 = \cos(\theta)\flux$ and $\flux_2 = \sin(\theta)\flux$: we obtain, for example, $\monotoneZoneTwoD$ as in \Cref{fig:D2Q5}.
    We take $\theta = \pi/4$ with $\flux(\conservedVariable)=u^2/2$.

    \begin{figure}
        \begin{center}
            \includegraphics[width = 1\textwidth]{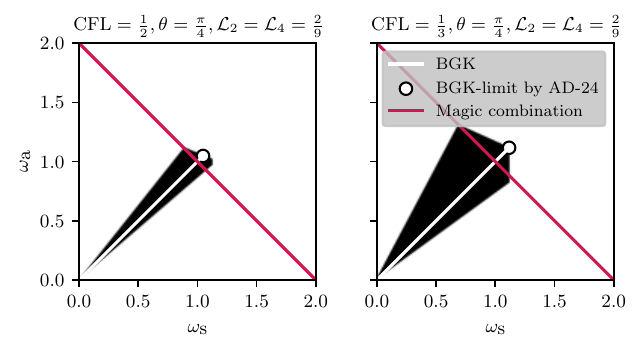}    
        \end{center}\caption{\label{fig:D2Q5}Monotonicity area (in black) in the $\relaxationParameterSymmetric\relaxationParameterAntiSymmetric$-plane for the \lbmScheme{2}{5} scheme.}
    \end{figure}

\subsubsection{Qualitative properties and invariant compact set}

    \begin{figure}
        \begin{center}
            \includegraphics[width=0.99\textwidth]{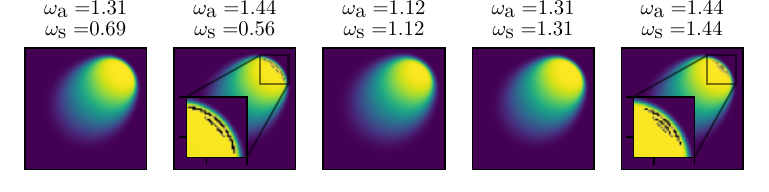}
        \end{center}\caption{\label{fig:D2Q5result}Solution of the \lbmScheme{2}{5} scheme at final time $1/4$, using 64 grid-points \emph{per} direction. The black areas indicate points where the invariant compact set $[-\maximumInitialDatum, \maximumInitialDatum]$ for $\conservedVariableDiscrete$ is violated.}
    \end{figure}

We simulate on the bounded domain $[-1, 1]^2$ endowed with periodic boundary conditions with the parameters on the right of \Cref{fig:D2Q5}, using the initial datum $\conservedVariable^{\initial}(\vectorial{\spaceVariable}) = \indicatorFunction{[0, 1]}(\sqrt{\spaceVariable_1^2 + \spaceVariable_2^2})$.
The results in \Cref{fig:D2Q5result} are given both for the magic and BGK setting.
Similarly to the one-dimensional case, they highlight that violating the monotonicity constraint for the magic setting immediately creates data outside the invariant compact set.
On the other hand, the monotonicity constraints in the BGK case are stronger than what is needed just to preserve the invariant compact set for $\conservedVariableDiscrete$.
    
\section{Conclusions and perspectives}\label{sec:Conclusions}

In this work, we analyzed \strong{monotonicity} for a broad class of lattice Boltzmann schemes, focusing on the \strong{two-relaxation-times} (TRT) model, which fostered a proof of \strong{convergence} in the nonlinear case towards the entropy solution of a conservation law.
By introducing a second relaxation parameter, we demonstrated how \strong{numerical diffusion can be reduced}---leading to improved accuracy---while maintaining the monotonicity required to establish convergence in the spirit of the Lax-Wendroff theorem. Notably, we showed that the monotonicity conditions derived for the so-called ``magic'' case are \strong{optimal} and cannot be further improved.
Finally, let us mention that it was out of the scope of the paper to provide a comprehensive study of the best choice of the (many) free parameters that lattice Boltzmann schemes feature, in terms of the tradeoff between stability with respect to a given norm and accuracy.
Looking ahead, an important challenge lies in recovering desirable properties within the BGK framework, even when some coefficients in the relaxation phase are \strong{negative}. A promising direction could involve approaches explored in \cite{hundsdorfer2003monotonicity, hundsdorfer2006monotonicity} and \cite{bellotti2023numerical}.
Another interesting path of research tackles the study of the convergence for schemes where equilibria feature \strong{non-linearities} which are not only proportional to the flux of the PDE.
Finally, extending these results to \strong{systems} of conservation laws---a significantly more complex setting---remains a compelling avenue for future research.

\section*{Acknowledgments}
The authors thank the two anonymous reviewers for their useful comments, which helped improving this work.

\bibliographystyle{siamplain}
\bibliography{biblio}

\end{document}